\newenvironment{nouppercase}{%
  \renewcommand{\uppercasenonmath}[1]{}}{}
\mathchardef\ordinarycolon\mathcode`\:
\renewcommand{\eqref}[1]{\textup{(\ignorespaces\ref{#1}\unskip\@@italiccorr)}}
\def\maketag@@@#1{\hbox{\m@th\normalfont\bfseries#1}}
\numberwithin{equation}{section}
\numberwithin{figure}{section}
\theoremstyle{plain}
\newtheorem{thm}[equation]{Theorem}
\newtheorem*{FundClaim*}{Fundamental Claim}
\newtheorem{lemma}[equation]{Lemma}
\newtheorem{cor}[equation]{Corollary}
\newtheorem{prop}[equation]{Proposition}
\theoremstyle{definition}
\newtheorem{definition}[equation]{Definition}
\newtheorem{question}[equation]{Question}
\newtheorem{example}[equation]{Example}
\newtheorem{axiom}[equation]{Axiom}
\newcommand{\Q}{\ensuremath \mathbb{Q}}
\newcommand{\R}{\ensuremath \mathbb{R}}
\newcommand{\Z}{\ensuremath \mathbb{Z}}
\newcommand{\N}{\ensuremath \mathbb{N}}
\DeclareMathOperator{\End}{End}
\DeclareMathOperator{\im}{im}
\DeclareMathOperator{\ann}{ann}
\DeclareMathOperator{\dom}{dom}
\newcommand{\power}{\mathscr P}
\begin{document}

\title[Algebraization of infinite summation]{Algebraization of infinite summation}

\author{Pace P.\ Nielsen}
\address{Department of Mathematics, Brigham Young University, Provo, UT 84602, USA}
\email{pace@math.byu.edu}

\keywords{endomorphism ring, finite topology, gapless limits, infinite summation, sequential coreflection, series}
\subjclass[2020]{Primary 08A65, 16S50, Secondary 16W80, 22A05, 40A05, 40A10}

\begin{abstract}
An algebraic framework in which to study infinite sums is proposed, complementing and augmenting the usual topological tools.  The framework subsumes numerous examples in the literature.  It is developed using many varied examples, with a particular emphasis on infinitizing the usual group and ring axioms.  Comparing these examples reveals that a few key algebraic properties play a crucial role in the behaviors of different forms of infinite summation.  Special attention is given to associativity, which is particularly difficult to properly infinitize.  In that context, there is an important technique called the Eilenberg-Mazur swindle that is studied and greatly generalized.

Some special properties are singled out as potential axioms.  Interactions between these potential axioms are analyzed, and numerous results explore how to impose new axioms while retaining old ones.  In some cases the axioms classify or categorize a given example.  Surprisingly, such axiomatizations are very concise, relying on only a handful of natural conditions.

These investigations reveal more precisely the part that topology plays in the formation of infinite sums.  Special attention is given to the methods of partial summation and unconditional summation.  In the opposite direction, it is proved that from the infinite sums alone one can create a refined topology, lying between the original topology and its sequential coreflection.  Another especially interesting application of these ideas is the construction of new algebraic quotient structures that retain the ability to handle infinite summation.
\end{abstract}

\begin{nouppercase}
\maketitle
\end{nouppercase}

\section{Introduction}\label{Section:Introduction}

Infinite sums occur naturally throughout mathematics.  Infinite series over $\R$ provide a standard example, and this form of infinite summation generalizes algebraic binary addition.  In algebra, normally one can only add finitely many things, and so infinite sums are classically handled by topological or measure-theoretic methods.  In this paper an algebraic framework is developed to help handle such sums.

This new framework covers a variety of different forms of infinite summation.  A wide range of examples are presented to emphasize its generality.  Surprisingly, among the diversity of examples, certain common conditions arise as touchstones.  Indeed, these conditions and variants thereof have appeared explicitly throughout the literature for decades, such as in the context of ``partial summation of mappings'' \cite{Kuros},  ``summation methods'' \cite{KatzStraus}, ``agreeable categories'' \cite{Corner}, ``partially additive monoids'' \cite{ArbibManes1978}, ``$\Sigma$-monoids'' \cite{Higgs} (later expanded to other $\Sigma$-structures; see \cite{AndresMHeunen} for an overview of the recent relevant literature), and quite recently ``summability spaces'' \cite{BKKPS} and ``summation structures'' \cite{AriManny}.  Though we shall call these common conditions ``axioms'' the reader should think of them only as \emph{potential} axioms, since different forms of infinite summation may satisfy different combinations of the conditions, or none of the conditions at all.

A significant portion of the paper is dedicated to motivating these axioms, including exploring their consequences as well as their drawbacks.  One such motivation is the goal to infinitize the usual axioms of groups and rings, as much as possible.  It turns out that associativity is the trickiest axiom to handle.  If taken to the extreme, infinite associativity prevents the existence of additive inverses, a well known consequence of the Eilenberg-Mazur swindle.  The technique inherent in the swindle generalizes greatly, as shown in Section \ref{Section:Assoc}.  Not only in that section, but in all results throughout the paper, we make it clear which axioms are needed for the results to hold.  In other words, we peel away, as much as possible, the detritus of irrelevant assumptions and conditions.  Indeed, one of the main results of this paper is a succinct axiomatization of algebraic properties of infinite summation found in endomorphism rings (ultimately laid out in Theorem \ref{Thm:MainBigLeftReord}).  Similarly, the absolutely convergent series, after a minor extension, are described as a maximal system satisfying certain constraints (see Proposition \ref{Prop:AbsoluteSeriesCategorical}).

Another important consequence of this framework is that it clarifies the extent to which topology interacts with infinite sums.  This is described in Sections \ref{Section:TopologyRecovered} through \ref{Section:UncondSums}, with special emphasis given to the methods of partial summation and unconditional summation.  Interestingly, when infinite summation arises from limits of partial sums under a topology, and one forgets the topology, then a refinement of the original topology is often recoverable from the sums alone (see the discussion near the end of Section \ref{Section:PerfectingPartial}).

An important technique of algebra is the formation of factor algebras, using equivalence classes.  The paper ends with an investigation into what is needed for infinite sums to pass to such quotients.

\section{Examples and definitions}\label{Section:DefinitionExamples}

The following example will motivate the main definition of this section.

\begin{example}[Series]
Consider a series $\sum_{n=0}^{\infty}a_n$ over $\R$ that converges in the usual sense.  The summands describe an $\N$-indexed family $\underline{a}=(a_0,a_1,\ldots)\in \R^{\N}$.  We briefly denote the series as $\Sigma\underline{a}=\sum_{n=0}^{\infty}a_n$.  Think of $\Sigma$ as a partial function $\R^{\N}\to \R$, defined only on those families with a convergent corresponding series.  We refer to $\R^{\N}$ as the \emph{intended} domain, but of course the true domain is the set of those families whose series are convergent.

It is standard to allow for series indexed by sets other than $\N$.  It is perfectly acceptable, for example, to consider a series on $\N\setminus\{0\}$, say $\sum_{i=1}^{\infty}b_i$, when such a series converges in the usual sense.  By writing $\underline{b}=(b_1,b_2,\ldots)\in \R^{\N\setminus\{0\}}$, we will similarly denote this series as $\Sigma\underline{b}$.  Indeed, any subset of $\N$, including any finite subset, can act as an indexing set for a possible convergent series.  Thus, we have extended the intended domain of $\Sigma$ to the much larger set
\[
\bigcup_{I\in \power(\N)}\R^I.
\]

Since finite summation in $\R$ is independent of any sort of ordering consideration, the intended domain of $\Sigma$ naturally extends to allow for any finite indexing set (not necessarily a subset of $\N$).  However, such an extension raises a thorny set-theoretical issue: the domain of $\Sigma$ is now a \emph{proper class} rather than a set.  We will treat $\Sigma$ as a class function, but those who want to avoid any mention of classes may hereafter assume that indexing sets are implicitly assumed to belong to some fixed set (such as the power set $\power(\N)$, or, if more adventurous, a Grothendieck universe).

In full generality, we may as well take the intended domain of $\Sigma$ to equal $\bigcup_{I}\R^{I}$, where the union ranges over all sets $I$ (inside some universe, if you prefer).  We then merely need to describe how $\Sigma$ acts with regard to an arbitrary indexing set.  For instance, when presented with the unusual indexing set $\Z$, then we can either declare that $\Sigma$ is simply never defined on $\R^{\Z}$, or clarify how $\sum_{n\in \Z}c_n$ is a meaningful quantity (for instance, by treating it as the sum of the two series $\sum_{n=0}^{\infty}c_n$ and $\sum_{n=1}^{\infty}c_{-n}$ when they are convergent).
\end{example}

With that example in mind, we now make the following very general definition.

\begin{definition}\label{Definition:SummationSet}
Given a set $X$, then a partial function
\[
\textstyle{\Sigma \colon \bigcup_{I}X^I\to X},
\]
where the union runs over all sets $I$, is called a \emph{summation system} on $X$.

A family $\underline{a}=(a_i)_{i\in I}\in X^I$ is \emph{summable} when $\underline{a}\in \dom(\Sigma)$.  In that case its \emph{sum} is $\Sigma\underline{a}$.
\end{definition}

If we restrict a summation system $\Sigma$ to $X^I$, then we have a partial $I$-ary operation $\Sigma|_{X^I}\colon X^I\to X$.  One can think of $\Sigma$ as the conglomeration of such operations, as the index set $I$ varies.  This is the source of the terminology ``summation \emph{system}.''

To determine the sum of some family $\underline{a}\in X^I$, some forms of infinite summation rely on more than the underlying set $X$ and the indexing set $I$.  For instance, even in the case of series when $X=\R$ and $I=\N$, one is implicitly using the fact that $\N$ has an ordering $<$ and that $\R$ has a standard topology $\tau$.  If one wants to make the dependence of $\Sigma$ on any extra structural components more explicit (and functorial) that is easily accomplished.  For instance, for series one could instead take $X=\R\times \{\tau\}$ and take $I=\N\times \{<\}$.  However, in our investigation we will find that such fixes are unneeded.  One benefit of summation systems is that they are notationally minimal and avoid unnecessary complications.

The following two examples demonstrate how other types of infinite summation also fit in this framework.

\begin{example}[Integrals]\label{Example:Integrals}
Putting $X=\R$, consider a family $\underline{a}\in X^I$, where the index set $I$ is required to be a subset of $\R$.  Treating $\underline{a}$ as a function $I\to \R$, put $\Sigma\underline{a}=\int_{x\in I} \underline{a}(x)\, dx$, but only when $\underline{a}$ is an integrable function (in the sense of some fixed integration theory, such as Riemann integrable or Lebesgue integrable).  This summation system allows for summing over mainly uncountably infinite families.
\end{example}

\begin{example}[Endomorphisms]\label{Example:Endos}
Let $R$ be a ring, and let $M_R$ be a right $R$-module.  Fix $X:=\End(M_R)$, and write endomorphisms on the left of elements of $M$.

Given a family of endomorphisms $\underline{a}=(a_i)_{i\in I}\in X^I$, it is summable when, for each $m\in M$, only finitely many members of $\underline{a}$ are nonzero on $m$.  In that case, define the sum $\Sigma\underline{a}$ to be the endomorphism where
\[
(\Sigma\underline{a})(m)=\sum_{i\in I}a_i(m).
\]
In the sum on the right side we may drop all zero summands to make it a finite addition, and thus an element of $M$.
\end{example}

These examples demonstrate that summation systems can have wildly varying behaviors.  Many of these differences can be captured via simple statements about the index sets.  We end this section by posing a few axioms that encapsulate important properties involving what we can say about index sets.  (Remember that these are only \emph{potential} axioms, and are not assumed to hold for all summation systems.)  Perhaps the most natural is the following.

\begin{axiom}[\bf Reindexing invariance\rm ]\label{Axiom:ReindexInvariance}
If $\varphi\colon I'\to I$ is a bijection, and if $(a_i)_{i\in I}$ is a summable family, then $(a_{\varphi(i')})_{i'\in I'}$ is also summable with the same sum.
\end{axiom}

The summation system corresponding to \emph{absolutely} convergent series satisfies this axiom, once we have extended the summation system to allow for any countably infinite indexing set.  On the other hand, by Riemann's ``series theorem'' we know that any conditionally convergent series can be permuted to a divergent series, and it can also be permuted to a conditionally convergent series with any other sum, so Axiom \ref{Axiom:ReindexInvariance} fails badly in that case.

Speaking of permutations, if one wishes to focus only on series indexed by $\N$, then it is natural to consider a weaker version of Axiom \ref{Axiom:ReindexInvariance} limited to permutations rather than bijections, or in other words where we require $I=I'$.  This weakened axiom still separates the absolutely convergent series from the rest.

Integrals are highly sensitive to the spacing of values, and so the summation system from Example \ref{Example:Integrals} does not satisfy Axiom \ref{Axiom:ReindexInvariance}, even when limited to permutations.  On the other hand, the summation that occurs in endomorphism rings does have reindexing invariance.

Reindexing invariance is so central that it may transcend being an axiom, and instead be built directly into the definitional framework being studied, such as in \cite[Definition 2.1]{AndresMHeunen}.

Another key attribute of some forms of summation is captured by the following axiom.

\begin{axiom}[\bf Subfamilies remain summable\rm ]\label{Axiom:SubsSummable}
If $I'\subseteq I$ are sets, and if $(a_i)_{i\in I}$ is a summable family, then $(a_{i})_{i\in I'}$ is also summable.
\end{axiom}

A standard exercise given to students is to show that a series is absolutely convergent if and only if all subseries are convergent (and subsequently absolutely convergent).  Thus, just as we did with Axiom \ref{Axiom:ReindexInvariance}, we can use Axiom \ref{Axiom:SubsSummable} to carve out the class of absolutely convergent series from among the class of all convergent series.  On the other hand, if we restrict our summation system to only allow absolutely convergent series indexed by countably infinite---but not finite---sets, then by fiat Axiom \ref{Axiom:SubsSummable} fails, but Axiom \ref{Axiom:ReindexInvariance} still holds.  A modified version of Axiom \ref{Axiom:SubsSummable}, where we add the hypothesis $|I'|=|I|$, remains true.

Integrable functions generally do not remain integrable when restricted to an arbitrary subset of the domain of definition.  However, Riemann integrable functions remain integrable when restricted to closed subintervals.  Similarly, Lebesgue integrable functions remain integrable when restricted to measurable subsets.  Thus, in some contexts Axiom \ref{Axiom:SubsSummable} can be viewed as a valuable goal, by which we can measure improvement from one summation system to another.

Axiom \ref{Axiom:SubsSummable} holds in endomorphism rings; if only finitely many members of a family of endomorphisms of a module $M$ are nonzero on some $m\in M$, then the same holds true for any subfamily.  It is important to note that here (as well as in most of the other examples) we are implicitly using the fact that, by convention, an empty sum is defined to equal to $0$.

We are now equipped with examples of summation systems where Axioms \ref{Axiom:ReindexInvariance} and \ref{Axiom:SubsSummable} both hold, where they both fail, and where the first holds but not the second.  To finish showing the complete independence of these axioms, we will now give a natural example where the second axiom holds but the first fails.

\begin{example}[Noncommutative magmas]\label{Example:NoncommMagma}
Let $X$ be a set with a binary operation; such algebras are sometimes called \emph{magmas}.  Write the operation as multiplication, for convenience only.  Given any family in $X^{\{0,1\}}$, think of that family as an ordered pair.  Define a summation system $\Sigma$ on such ordered pairs by taking $(x,y)\mapsto xy$.  In other words, our ``summation'' is precisely multiplication.

Extend the definition of $\Sigma$ to allow for singleton families and the empty family, say by having $\Sigma$ send any singleton family to its unique member, and taking $()\mapsto z$ for some distinguished element $z\in X$, when $X\neq \emptyset$.  Leaving $\Sigma$ undefined with respect to all other indexing sets, then subfamilies of summable families are summable (i.e., Axiom \ref{Axiom:SubsSummable} holds).

Reindexing invariance fails if $X\neq \emptyset$, because the only set with cardinality $2$ on which $\Sigma$ is defined is $\{0,1\}$.  Less trivially, reindexing even fails for permutations as long as the multiplication is noncommutative.
\end{example}

\section{Summing small collections}

On account of the fact that infinite summation often arises as an extension of finite addition, it is natural to want to impose group-like axioms on summation systems.  In that vein, it seems simplest to start with associativity.  However, already when generalizing associativity to arbitrary \emph{finite} sums, then as convincingly argued (multiplicatively) in \cite{Poonen}, one should \emph{a priori} assume that the empty sum exists and that it is an additive identity.  Thus, the best starting point seems to be the mild assertion:

\begin{axiom}[\bf Empty sum existence\rm ]\label{Axiom:EmptyExists}
The empty family $()$ belongs to the domain of $\Sigma$.
\end{axiom}

This axiom is entirely benign, in the sense that any summation system on a nonempty set $X$ that does not already satisfy this axiom can be forced to do so, by simply choosing a value for $\Sigma()$ from $X$.  In other words, this axiom merely asserts the existence of a distinguished element in $X$.  Note that empty sum existence is an immediate consequence of assuming that subfamilies remain summable, Axiom \ref{Axiom:SubsSummable}, as long as $\Sigma$ is not the empty function.

If we continue to follow the philosophy inherent in \cite{Poonen}, then we should treat the sum of the empty family as an additive identity, no matter its placement in a sum.  This motivates the following nomenclature.

\begin{definition}
Given any family $\underline{a}=(a_i)_{i\in I}\in X^I$, then the \emph{core} of that family is the subfamily $(a_i)_{i\in I'}$ where
\[
I':=\{i\in I\, :\, a_i\neq \Sigma()\}.
\]
(When Axiom \ref{Axiom:EmptyExists} fails, interpret that inequality as vacuously true.)

If $\underline{a}$ is a subfamily of some other family $\underline{b}$ that has the same core, we say that $\underline{b}$ is a \emph{core-extension} of $\underline{a}$, and that $\underline{a}$ is a \emph{core-restriction} of $\underline{b}$.
\end{definition}

With this terminology in place, we can now easily assert what it means for $\Sigma()$ to act as a summation identity.

\begin{axiom}[\bf Zero means nothing\rm ]\label{Axiom:EmptyAddsNada}
If $\underline{b}$ is a core-extension of $\underline{a}$, then
\begin{equation}\label{Eq:Axiom5Implication}
\underline{b}\in \dom(\Sigma)\ \Longleftrightarrow\ \underline{a}\in \dom(\Sigma),
\end{equation}
and $\Sigma\underline{a}=\Sigma\underline{b}$ when both families are summable.
\end{axiom}

If Axiom \ref{Axiom:EmptyExists} fails, then Axiom \ref{Axiom:EmptyAddsNada} is basically vacuously true, since there are no nontrivial core-extensions.  So, it is only relevant to consider the latter axiom in the presence of the former.

Series summation (with or without absolute convergence, but allowing families indexed by subsets of $\N$) satisfies this axiom when the biconditional in \eqref{Eq:Axiom5Implication} is replaced by just the forward implication.  Integral summation, say in the sense of Lebesgue, fails even that forward implication, simply because zero functions on nonmeasurable sets are not integrable.  However, we can enlarge the collection of integrable functions to satisfy Axiom \ref{Axiom:EmptyAddsNada}, by declaring that a function $f$ is integrable if $f$ has the same core as some Lebesgue integrable function $g$; in which case we take the integral of $f$ equal to the Lebesgue integral of $g$.  Finally, Axiom \ref{Axiom:EmptyAddsNada} holds fully for endomorphism ring summation; we can remove or adjoin zero endomorphisms without changing a sum.

One should expect to see some form of Axiom \ref{Axiom:EmptyAddsNada} take hold whenever $X$ is a magma with a \emph{neutral} or \emph{identity} element, as long as the summation system incorporates that algebraic structure in some way.  One of the weakest forms of Axiom \ref{Axiom:EmptyAddsNada} is simply the assertion that any summable family can be replaced by its core (e.g., see the axiom on neutral elements in \cite[Definition 3.1]{AndresMHeunen}).  However, even when the full axiom fails, it often holds after minor modifications (as in the case of integration).  This is made precise, as follows.

\begin{prop}\label{Prop:ZeroExtens}
Let $X$ be a set with a summation system $\Sigma$ satisfying \textup{Axiom \ref{Axiom:EmptyExists}}.  Assume that any two summable families with the same core have the same sum \textup{(}a weakening of \textup{Axiom \ref{Axiom:EmptyAddsNada})}.  Put
\[
\Sigma':=\left\{(\underline{x},s)\in \textstyle{\bigcup_{I}}X^I\times X \, :\, \text{some core-extension of $\underline{x}$ is $\Sigma$-summable with sum $s$}\right\}.
\]
Then $\Sigma'$ is a summation system on $X$, it extends $\Sigma$, it satisfies the modified \textup{Axiom \ref{Axiom:EmptyAddsNada}} where the biconditional in \eqref{Eq:Axiom5Implication} is replaced by only the forward implication, and $\Sigma'$ is a subsystem of every other such extension.

Moreover, put
\[
\Sigma'' :=\left\{(\underline{x},s)\in \textstyle{\bigcup_{I}}X^I\times X \, :\, \text{the core of $\underline{x}$ is $\Sigma'$-summable with sum $s$} \right\}.
\]
Then $\Sigma''$ is a summation system on $X$, it extends $\Sigma'$, it satisfies \textup{Axiom \ref{Axiom:EmptyAddsNada}} fully, and it is a subsystem of every other such extension.

If $\Sigma$ satisfies \textup{Axiom \ref{Axiom:ReindexInvariance}}, then so do $\Sigma'$ and $\Sigma''$.  The same is true for \textup{Axiom \ref{Axiom:SubsSummable}}.
\end{prop}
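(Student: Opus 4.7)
The plan is to verify each clause by leaning on a single structural observation: core-extension is a transitive relation (any $\underline{c}$ that core-extends $\underline{b}$, which core-extends $\underline{a}$, automatically core-extends $\underline{a}$, since all three share the same core and agree on the smallest index set), and the core of a family is a unique, canonical subfamily.

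Starting with $\Sigma'$, single-valuedness is the one place the hypothesis is actively used: any two core-extensions of $\underline{x}$ share the core of $\underline{x}$, hence have the same sum by assumption. That $\Sigma'$ extends $\Sigma$ is trivial since every family core-extends itself. For the forward implication of \eqref{Eq:Axiom5Implication}, suppose $\underline{b}$ core-extends $\underline{a}$ and $\underline{b}\in\dom(\Sigma')$, witnessed by a $\Sigma$-summable core-extension $\underline{c}$ of $\underline{b}$; by transitivity $\underline{c}$ is also a core-extension of $\underline{a}$, so $\underline{a}\in\dom(\Sigma')$ with sum $\Sigma\underline{c}=\Sigma'\underline{b}$. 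For minimality, any extension $\Sigma^{\ast}$ of $\Sigma$ satisfying the forward implication must assign to each $\underline{x}\in\dom(\Sigma')$ the value $\Sigma\underline{c}$ of a witnessing $\underline{c}$: indeed $\underline{c}\in\dom(\Sigma^{\ast})$, and the forward implication applied to the pair where $\underline{c}$ core-extends $\underline{x}$ gives $\underline{x}\in\dom(\Sigma^{\ast})$ with the same sum.

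Turning to $\Sigma''$, well-definedness is automatic since the core of $\underline{x}$ is unique. It extends $\Sigma'$ because the forward implication just established, applied with larger family $\underline{x}$ and smaller family the core of $\underline{x}$, shows the core is $\Sigma'$-summable with sum $\Sigma'\underline{x}$ whenever $\underline{x}$ is. Full Axiom \ref{Axiom:EmptyAddsNada} holds because any two core-related families share the core, whose $\Sigma'$-summability simultaneously controls both $\Sigma''$-statuses and assigns the common sum. Minimality: if $\Sigma^{\ast}$ extends $\Sigma$ and satisfies Axiom \ref{Axiom:EmptyAddsNada} fully, then in particular it satisfies the forward implication, so by the previous minimality $\Sigma^{\ast}\supseteq\Sigma'$; the backward implication of Axiom \ref{Axiom:EmptyAddsNada} for $\Sigma^{\ast}$ then lifts the $\Sigma^{\ast}$-summable core of any $\underline{x}\in\dom(\Sigma'')$ back to $\underline{x}$ itself.

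Finally, to transfer Axioms \ref{Axiom:ReindexInvariance} and \ref{Axiom:SubsSummable}, suppose $\underline{a}=(a_i)_{i\in I}$ is $\Sigma'$-summable via a core-extension $\underline{c}=(c_j)_{j\in J}$ with $I\subseteq J$. Given a bijection $\varphi\colon I'\to I$, extend it to a bijection $\tilde\varphi\colon J'\to J$ by adjoining any bijection between $J'\setminus I'$ and $J\setminus I$ (e.g.\ take $J':=I'\sqcup(J\setminus I)$); reindexing invariance of $\Sigma$ yields $\Sigma$-summability of $(c_{\tilde\varphi(j')})_{j'\in J'}$, which core-extends $(a_{\varphi(i')})_{i'\in I'}$. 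For $I'\subseteq I$, the subfamily of $\underline{c}$ indexed by $I'\cup(J\setminus I)$ is $\Sigma$-summable by Axiom \ref{Axiom:SubsSummable} and core-extends $(a_i)_{i\in I'}$. Both transfers pass to $\Sigma''$ at once, since reindexing and restricting commute with taking cores. The main obstacle is purely bookkeeping: keeping track of which index sets contain which, and verifying at each step that a constructed family has exactly the intended core; beyond that, every step follows mechanically from the transitivity of core-extension together with the definitions of $\Sigma'$ and $\Sigma''$.
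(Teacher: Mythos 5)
Your proposal is correct and follows essentially the same route as the paper: single-valuedness of $\Sigma'$ from the same-core hypothesis, extension and the forward implication via transitivity of core-extension, and minimality by pushing a witnessing $\Sigma$-summable core-extension through the weakened axiom in $\Sigma^{\ast}$. The paper leaves the $\Sigma''$ clauses and the transfer of Axioms \ref{Axiom:ReindexInvariance} and \ref{Axiom:SubsSummable} to the reader; your explicit treatment of these (extending a bijection over the zero indices, restricting to $I'\cup(J\setminus I)$, and noting that reindexing and restriction commute with taking cores) fills those gaps correctly.
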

\begin{proof}
Note that the stated assumption, about equality of sums when the cores are equal, is necessary in order to have any hope that some extension of $\Sigma$ satisfies Axiom \ref{Axiom:EmptyAddsNada}.  Below, we also demonstrate its sufficiency.

We first prove that the relation $\Sigma'$ is a function.  Given a family $\underline{x}\in X^I$, and given $s,t\in X$, suppose that $(\underline{x},s),(\underline{x},t)\in \Sigma'$.  From the definition of $\Sigma'$, fix core-extensions $\underline{y},\underline{y}'\in \dom(\Sigma)$ of $\underline{x}$ such that $s=\Sigma\underline{y}$ and $t=\Sigma\underline{y}'$.  Note that $\underline{y}$ and $\underline{y}'$ have the same core (namely, the core of $\underline{x}$), and so the assumption of the proposition requires that they have the same sum.  In other words $s=t$, as desired.  Thus, $\Sigma'$ is a summation system on $X$.

Second, it is clear that $\Sigma\subseteq \Sigma'$, since any $\Sigma$-summable family is a core-extension of itself.

Third, we show that the summation system $\Sigma'$ satisfies Axiom \ref{Axiom:EmptyAddsNada} when \eqref{Eq:Axiom5Implication} is only a forward implication.  Assume $\underline{b}\in \dom(\Sigma')$ is a core-extension of some subfamily $\underline{a}$.  From the definition of $\Sigma'$, there is some core-extension $\underline{c}\in \dom(\Sigma)$ of $\underline{b}$.  In particular, $\underline{c}$ is also a core-extension of $\underline{a}$.  Hence, $\underline{a}\in \dom(\Sigma')$, with
\[
\Sigma'\underline{a}=\Sigma\underline{c}=\Sigma'\underline{b}.
\]

Finally, suppose that $\Sigma^{\ast}$ is a summation system on $X$ extending $\Sigma$ and also satisfying Axiom~\ref{Axiom:EmptyAddsNada} when \eqref{Eq:Axiom5Implication} is only a forward implication; we will show that $\Sigma'\subseteq \Sigma^{\ast}$.  Let $(\underline{x},s)\in \Sigma'$.  Fix some core-extension $\underline{y}\in \dom(\Sigma)$ of $\underline{x}$ with $s=\Sigma\underline{y}$.  Now, $(\underline{y},s)\in \Sigma\subseteq  \Sigma^{\ast}$.  Hence, by a direct application of the weakened Axiom \ref{Axiom:EmptyAddsNada}, we must have $(\underline{x},s)\in \Sigma^{\ast}$.

The claims in the second and third paragraphs of the proposition are proved similarly and left to the reader.  (Moreover, assuming Axiom \ref{Axiom:SubsSummable}, then $\Sigma=\Sigma'$.)
\end{proof}

The second paragraph of the previous proposition rarely finds application, because a large number of extraneous zero summands is both distracting and unnecessary.  Additionally, many types of infinite summation are developed under strong limitations on the sizes of indexing sets, and thus automatically avoid such superfluous summing.  Hence, it is no surprise that series summation is not (usually) extended to allow adjoining arbitrarily many zeros.  Yet, there are situations where it is natural to impose no constraints on the number of summands (zero or otherwise), such as with regard to endomorphism ring summation.  There are also situations where irrelevant zero summands are often forced by fiat, in order to provide more regularity to an indexing set (for instance, if we decide that series summation should only allow the single indexing set $\N$).

Next, having addressed empty summation, we now handle singletons.  The obvious axiom to consider is the following.

\begin{axiom}[\bf Singletons sum simply\rm ]\label{Axiom:Singletons}
If $\underline{a}\in X^I$ where $|I|=1$, then $\underline{a}$ is summable and its sum is the unique member of $\underline{a}$.  Informally, $\Sigma(a)=a$ for each $a\in X$.
\end{axiom}

This potential axiom goes by other names in the literature, such as the ``unary sum axiom'' in \cite{Haghverdi}.  Series summation (with or without absolute convergence, but allowing any finite set as an index set) satisfies Axiom \ref{Axiom:Singletons}.  Endomorphism summation also satisfies this axiom.  Integral summation does not satisfy the axiom, since integrating a function defined at only one point yields zero (and also since we only allow subsets of $\R$ as index sets).

There are obvious ways in which Axiom \ref{Axiom:Singletons} may be weakened.  For instance, if concerned about the arbitrariness of $I$, one could instead replace $I$ by a specific singleton indexing set, such as $\{0\}$.  Another natural weakening would be to remove the \emph{conclusion} that $\underline{a}$ is summable, and instead require as part of the \emph{premise} that $\underline{a}$ is summable.  In other words, the requirement that $\Sigma$ be a total (rather than merely partial) function on singletons can be disassociated from the requirement that the summable singletons sum simply.

Problems with totality in summation systems are pervasive.  Convergence considerations for series, and integrability issues for functions, are two important examples.  On the other hand, for finite families the totality problem often disappears.  Thus, we posit:

\begin{axiom}[\bf{Finite-totality}\rm ]\label{Axiom:Totality}
The partial function $\Sigma$ is total on $X^I$ when $|I|<\aleph_0$.
\end{axiom}

For each set $I$ we can speak of \emph{$I$-totality}, and when this holds for all sets of a given cardinality $\kappa$ we can speak of \emph{$\kappa$-totality}.  (Technically, cardinals could be indexing sets, but context will always make the distinction clear.)  Series summation naturally extends to have finite-totality.  Endomorphism summation also has finite-totality.  However, neither of these systems extends in any natural way to allow for $\aleph_0$-totality.  We will return to this issue in the next section.

Axiom \ref{Axiom:EmptyExists} is precisely $\emptyset$-totality, or equivalently $0$-totality.

The summation system from Example \ref{Example:NoncommMagma} has $\{0,1\}$-totality, but fails $2$-totality.  Modifying the example produces the following consequential construction of a system satisfying full totality, except on the empty family.

\begin{example}[Choice functions]\label{Example:Choice}
For each nonempty index set $I$, fix---once and for all---some distinguished element $i_I\in I$ (using the axiom of global choice if available, otherwise limit available index sets $I$ to some collection with a global choice function).  Once that choice is made, when given any family $\underline{a}=(a_i)_{i\in I}$, define $\Sigma\underline{a}=a_{i_I}$.  In essence, this summation system instantiates (some version of) the set-theoretical axiom of choice.
\end{example}

There are simple situations where full totality is enjoyed.  This may occur even in ordinary algebra, as in the following example.

\begin{example}[Direct sums]\label{Example:DirectSums}
Let $R$ be a ring, and let $X$ be the collection of right $R$-modules.  (Thus, in this example we have implicitly extended Definition \ref{Definition:SummationSet} to allow $X$ to be a proper class. Alternatively, restrict $X$ to be the collection of such modules inside some Grothendieck universe.)  Given a family $(M_i)_{i\in I}\in X^I$, we can define its sum as $\bigoplus_{i\in I}M_i$.

One can modify this example by working instead with isomorphism types of modules, in which case the resulting summation system has reindexing invariance.  Hereafter, we will implicitly work with isomorphism types whenever talking about direct sums.
\end{example}

Totality over all subsets of $\N$ is a common occurrence for topological systems.  The following example is representative.

\begin{example}[Knots]
For our purposes, let us say that a knot is a topological embedding of $[0,1]$ into the unit cube, where the endpoints go to $(0,0,0)$ and $(1,1,1)$ respectively, and the interior $(0,1)$ maps to the interior of the cube.  This definition may differ slightly from other sources (see, for instance, \cite{Lickorish}), but it captures the essential nature of tame knots, while allowing wild ones that behave well at a cut.  Some might call this a one-component tangle.

Let $X$ be the set of knots (say, up to ambient isotopy).  Let $\underline{a}=(a_i)_{i\in \N}\in X^{\N}$.  We can define $\Sigma\underline{a}$ by linearly shrinking and translating each $a_n$ so that its endpoints land at
\[
\left(\frac{2^n-1}{2^n},\frac{2^n-1}{2^n},\frac{2^n-1}{2^n}\right)\ \text{ and }\ \left(\frac{2^{n+1}-1}{2^{n+1}},\frac{2^{n+1}-1}{2^{n+1}},\frac{2^{n+1}-1}{2^{n+1}}\right)
\]
respectively, and adjoining the single point $(1,1,1)$.  In other words, we append each successive knot at the appropriate end points, but shrink each one in a manner so that the sum embeds in the unit cube.  The same process works just as well for sums over index sets $I\subseteq \N$ (and other ordered countable index sets).  This type of infinite summation has found many uses, including in a strong non-classification result for wild knots \cite{Kulikov}, and is sometimes called the \emph{connected sum}.
\end{example}

To end this section, let us discuss what it means for a summation system to generalize or extend some form of (binary) addition.  It is clear that series summation extends the usual addition in $\R$.  Summing knots generalizes the usual binary operation of tying two knots in sequence.  Endomorphism ring summation is an obvious generalization of adding two endomorphisms.  Finite direct sums are a special case of infinite direct sums.  Examples exist even without commutativity, as Example \ref{Example:NoncommMagma} demonstrates.  The oddball is integration, but that is because integration does not arise from generalizing a binary addition.  We formalize these observations as follows:

\begin{definition}
Let $X$ be a set with summation system $\Sigma$.  The binary (possibly partial) operation that takes a summable family $(a_0,a_1)\in X^{\{0,1\}}$ and outputs $\Sigma(a_0,a_1)$ is called the \emph{induced addition} for $\Sigma$.
\end{definition}

Any summation system with $2$-totality, or even just $\{0,1\}$-totality, then has an induced addition that is a total binary operation.  Moreover, if the summation system has reindexing invariance, then the induced addition is commutative.

Series summation is a prototypical example of the fact that if a summation system $\Sigma$ arises as the generalization of some binary operation $+$ on $X$, then $\Sigma$ can be extended (possibly by Proposition \ref{Prop:ZeroExtens}) so that the operation $+$ is now the induced addition.

\section{Associativity in summation systems}\label{Section:Assoc}

It is helpful to think of reindexing invariance, Axiom \ref{Axiom:ReindexInvariance}, as an infinite version of commutativity.  Similarly, Axioms \ref{Axiom:EmptyExists} and \ref{Axiom:EmptyAddsNada} together infinitize the notion of an additive identity.  With these axioms in place, and assuming $\{0,1\}$-totality, then the induced addition is total, commutative, and exhibits an additive identity.  Associativity is still missing.

To keep things general, throughout this section, we will use $+$ to denote a partial binary operation on $X$, which may or may not be the induced addition. When $+$ is total, we often want $+$ to also be associative, thus making $(X,+)$ a \emph{semigroup}.  For infinite sums, associativity is surprisingly tricky to handle.  Finite associativity takes advantage of the fact that for each $n\in \N$ the index set $\N_{<n}:=\{0,1,\ldots, n-1\}$ comes with a standard ordering.  The simplest infinite ordered set is $\N$ under its usual order, and even in this case strange behaviors arise when adopting modest forms of associativity.  The most basic form might be the following.

\begin{axiom}[\bf Prefix associativity\rm ]\label{Axiom:WeakAssoc1}
If $\underline{a}=(a_i)_{i\in \N}$ is summable, then so is the left shift $\underline{a}'=(a_{i+1})_{i\in \N}$ and $\Sigma\underline{a}=a_0+\Sigma\underline{a}'$.
\end{axiom}

Informally, by writing infinite summation as infinitely repeated addition, we can think of prefix associativity as asserting that
\[
a_0+a_1+a_2+\cdots = a_0+(a_1+a_2+\cdots)
\]
as long as the sum on the left side exists, where we have merely peeled off the first summand from the rest of the sum.  The following theorem demonstrates a noticeable tension between Axiom \ref{Axiom:WeakAssoc1}, some group properties on addition, and $\N$-totality.

\begin{thm}\label{Thm:WeakSwindle}
Let $(X,+)$ be a right cancellative semigroup, with a summation system $\Sigma$ satisfying \textup{Axiom \ref{Axiom:WeakAssoc1}} \textup{(}restricted to constant families\textup{)}.  If $(a)_{i\in \N}\in X^{\N}$ is summable, then $a+a=a$.  Moreover, if $(X,+)$ has a left identity $0$, then $a=0$.
\end{thm}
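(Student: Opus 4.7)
Let $S := \Sigma((a)_{i\in\N})$ denote the sum of the constant family. The key observation to exploit is that the left shift of a constant family equals the family itself, so a single application of prefix associativity (even in its constant-family restricted form) yields $S = a + S$ without requiring any new summability hypothesis on $\Sigma$.

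From here the argument should reduce to a short manipulation inside the semigroup $(X,+)$. Substituting $a + S$ for $S$ on the right of $S = a + S$ and regrouping via the semigroup associativity of $+$ gives $S = a + (a + S) = (a+a) + S$. Comparing this with $S = a + S$ produces the equation $a + S = (a + a) + S$, and right cancellativity of $(X,+)$ cancels $S$ to deliver $a + a = a$. For the second assertion, combine this equality with the left-identity relation $0 + a = a$ to obtain $0 + a = a + a$, and right cancel $a$ to conclude $0 = a$.

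I do not anticipate any serious obstacle. The entire proof is really the observation that a constant sequence is a fixed point of the left shift, followed by a short Eilenberg-Mazer-style manipulation. The only points worth verifying carefully are that the weakened form of Axiom \ref{Axiom:WeakAssoc1} truly suffices (which it does, since the only left shift invoked anywhere is of a constant family), and that no reindexing invariance, commutativity of $+$, or additional totality of $\Sigma$ needs to be smuggled in. In particular, the argument does not require the family $(a+a)_{i\in\N}$ to be summable, nor any cancellation of $a$ until the very last step where a left identity is assumed.
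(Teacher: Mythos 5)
Your proposal is correct and follows essentially the same route as the paper's proof: both exploit that a constant family is fixed by the left shift to get $S=a+S$, substitute once, regroup by semigroup associativity to obtain $a+S=(a+a)+S$, and right-cancel $S$; the left-identity step is likewise identical. No gaps.
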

\begin{proof}
Assume that the constant family $\underline{a}:=(a)_{i\in \N}$ is summable.  Applying Axiom \ref{Axiom:WeakAssoc1} gives
\[
a+\Sigma\underline{a}=a+(a+\Sigma\underline{a}).
\]
Associativity yields $a+\Sigma\underline{a}=(a+a)+\Sigma\underline{a}$.  By right cancellativity we conclude that $a=a+a$.

If, additionally, $(X,+)$ has a left identity $0$, then $a+a=a=0+a$.  Right cancellativity then entails that $a=0$.
\end{proof}

Due to the interplay between the diverse hypotheses used in Theorem \ref{Thm:WeakSwindle}, a number of different behaviors in summation systems can arise.  First, if $(X,+)$ is a nontrivial abelian group, and if prefix associativity holds, the theorem guarantees that $\N$-totality fails.  This happens, for instance, with series summation and endomorphism ring summation, where the only $\N$-indexed, constant, summable family is the zero family.

Second, still assuming prefix associativity, but now additionally assuming $\N$-totality, then Theorem \ref{Thm:WeakSwindle} prevents $(X,+)$ from being a group.  Often, the culprit is a lack of cancellativity, as is the case for connected sums of knots and direct sums of modules.

In fact, when infinite commutativity is thrown into the mix, cancellativity fails at a more basic level, as described in the following theorem.

\begin{thm}\label{Thm:CancellativityStronglyFails}
Let $(X,+)$ be a magma, with a summation system $\Sigma$ satisfying \textup{Axiom \ref{Axiom:ReindexInvariance} (}for permutations on $\N$\textup{)}, \textup{Axiom \ref{Axiom:WeakAssoc1}}, and $\N$-totality.  For any countable list of elements $x_0,x_1,\ldots\in X$, there exists an element $y\in X$ such that $x_i+y=y$ for each $i\in \N$.
\end{thm}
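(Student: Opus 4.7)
The plan is to execute an Eilenberg--Mazur swindle: build $y$ as the sum of an $\N$-indexed family in which every $x_i$ already occurs with multiplicity $\aleph_0$, so that peeling one $x_i$ off the front leaves a family that is indistinguishable from the original as a multiset. Concretely, fix a bijection $\phi\colon \N\to \N\times \N$ and define $\underline{a}=(a_n)_{n\in\N}\in X^{\N}$ by $a_n:=x_{\pi_1(\phi(n))}$, where $\pi_1$ is the first-coordinate projection. Each $x_i$ then appears at precisely the $\aleph_0$-many positions $\phi^{-1}(\{i\}\times\N)$, and no elements outside $\{x_0,x_1,\ldots\}$ appear as values. By $\N$-totality, $\underline{a}$ is summable; set $y:=\Sigma\underline{a}$.

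Now fix $i\in\N$ and choose any index $n$ with $a_n=x_i$. Let $\sigma$ be the transposition of $\N$ swapping $0$ and $n$, and put $\underline{b}:=(a_{\sigma(k)})_{k\in\N}$, so that $b_0=x_i$. Axiom~\ref{Axiom:ReindexInvariance} (for permutations of $\N$) gives $\Sigma\underline{b}=y$, while Axiom~\ref{Axiom:WeakAssoc1} applied to $\underline{b}$ yields $y = x_i + \Sigma\underline{b}'$, where $\underline{b}':=(b_{k+1})_{k\in\N}$.

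The crux is to show that $\Sigma\underline{b}'=y$, for then $y=x_i+y$ as desired. By construction, $\underline{b}'$ is obtained from $\underline{a}$ by deleting a single occurrence of $x_i$ and shifting the remaining terms back onto $\N$. Hence for each $v\in X$ the multiplicity of $v$ in $\underline{b}'$ equals its multiplicity in $\underline{a}$ (when $v\neq x_i$) or that multiplicity minus $1$ (when $v=x_i$). In every case this multiplicity is either $0$ or $\aleph_0$, matching $\underline{a}$, so there is a bijection $\tau\colon\N\to\N$ with $b'_k=a_{\tau(k)}$; a second application of reindexing invariance then gives $\Sigma\underline{b}'=y$ and finishes the proof. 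The main subtlety is exactly this multiplicity bookkeeping: the family $\underline{a}$ must be designed so that removing any one summand does not change its multiset of values, which is why each $x_i$ is inserted $\aleph_0$ times rather than just once.
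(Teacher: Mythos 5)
Your proof is correct and is essentially the paper's argument: both sum an $\N$-indexed family in which every $x_i$ occurs infinitely often and exploit the fact that deleting one occurrence of $x_i$ leaves the multiset of values unchanged, then combine reindexing invariance with prefix associativity to get $y=x_i+y$. The only difference is packaging---the paper constructs a single permutation out of finite cycles whose left shift is literally equal to $\underline{a}$ (so reindexing is invoked once, a point it later uses in the remark about tame knots), whereas you invoke reindexing twice (a transposition to bring $x_i$ to the front, then an abstract multiplicity-matching bijection); both are valid.
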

\begin{proof}
Let $\underline{a}\in X^{\N}$ be a family where each $x_i$ occurs infinitely often.  Put $y:=\Sigma\underline{a}$.

Fixing $i\in \N$, it suffices to show that $x_i+y=y$.  Fix nonnegative integers $k_0<k_1<\ldots$ such that $a_{k_{j}}=x_i$ for each $j\in \N$.  For notational ease, put $k_{-1}:=-1$.  Define a map $\varphi\colon \N\to\N$ by the rule that
\[
\varphi(\ell)=\begin{cases}
k_j & \text{if $\ell=k_{j-1}+1$ for some integer $j\geq 0$},\\
\ell-1 & \text{otherwise}.
\end{cases}
\]
The integers in each interval $[k_{j-1}+1,k_j]$ are cycled by $\varphi$ to the left, where the left endpoint is sent to the right endpoint.  Thus, $\varphi$ is a permutation of $\N$.  By Axioms \ref{Axiom:ReindexInvariance} and \ref{Axiom:WeakAssoc1},
\[
y=\Sigma \underline{a}=\Sigma(a_{\varphi(j)})_{j\in \N}=x_i+\Sigma\underline{a}=x_i+y.\qedhere
\]
\end{proof}

Despite the fact that Axiom \ref{Axiom:ReindexInvariance} fails for knot summation, note that the connected sum of finitely many \emph{tame} knots is commutative, because there are ambient isotopies moving the knots across each other.  In the proof of Theorem \ref{Thm:CancellativityStronglyFails}, the permutation $\varphi$ is effected through finite cycles, so only a finite amount of commutativity is needed locally.  These local ambient isotopies glue together nicely.  Thus, the proof of Theorem \ref{Thm:CancellativityStronglyFails} applies to a countable list of tame knots (but note that the resulting knot $y$ is generally wild).

A version of Theorem \ref{Thm:CancellativityStronglyFails} holds for uncountable lists, after amplifying Axiom \ref{Axiom:WeakAssoc1} to allow for arbitrary infinite limit ordinals as index sets.  Such an extension is left to the motivated reader and holds, for example, for the system of direct sums (of isomorphism types) of modules.  Not every index set comes equipped with a standard well-ordering like $\N$ does, but in the presence of reindexing invariance there is no problem, since any indexing set can be replaced by an ordinal.

Prefix associativity is one of the weakest forms of infinite associativity available.  Interestingly, its formal statement is not based on the idea that parentheses can be moved, as is the case for standard ternary associativity
\[
(a+b)+c=a+(b+c).
\]
Rather, it manifests the implicit principle that an \emph{unparenthesized} sum (when it exists) can have parentheses inserted (a sort of twist to the viewpoint of \cite{Poonen}), which in the ternary case might look like
\[
+(a,b,c)\ \text{ exists }\Longrightarrow\ \left[+(a,b,c)=a+(b+c)\right.\ \text{ and }\ \left.+(a,b,c)=(a+b)+c\right].
\]
Consequently, parentheses can be moved, but this is only guaranteed when the unparenthesized sum exists in the first place.  Taking this idea to its logical conclusion, we obtain:

\begin{axiom}[\bf Insertive associativity\rm ]\label{Axiom:InsertAssociativity}
If $\underline{a}=(a_i)_{i\in I}\in X^I$ is summable, and if $P$ is a partition of $I$, then
\begin{equation}\label{Eq:InsertAssoc}
\Sigma\underline{a}=\Sigma\big(\Sigma(a_j)_{j\in J}\big)_{J\in P}
\end{equation}
and every sum on the right side exists.
\end{axiom}

As stated, this axiom lets us insert parentheses but not necessarily remove them, since an unparenthesized family might not be summable.  Of course, when totality holds, this issue is irrelevant.

One benefit of stating Axiom \ref{Axiom:InsertAssociativity} this way is that it applies in circumstances when totality is not present, such as when working with absolutely converging series, as well as working with endomorphism ring summation.  Indeed, many settings do not allow for the arbitrary removal or rearrangement of parentheses; just consider any situation where an infinite sum of zeros, like in Grandi's classical series
\[
(-1+1)+(-1+1)+(-1+1)+\cdots
\]
makes sense, but not the unparenthesized version.  On the other hand, for systems where sums are defined only if the summands do not interact in any way (such as when summing means taking the union of functions with disjoint domains), then parentheses can be safely removed.  In such cases, equalities like \eqref{Eq:InsertAssoc} are often taken to mean that if either side is defined, then so is the other, contrary to our convention that the existence of only one side implies the existence of the other; take care about such points when reading the literature.

Given any \emph{nonempty} sets $I'\subseteq I$, then there is some partition $P$ of $I$ with $I'\in P$.  (When $I'\neq I$, just take $P=\{I',I\setminus I'\}$.)  Thus, Axiom \ref{Axiom:InsertAssociativity} requires that nonempty subfamilies are summable, and so Axiom \ref{Axiom:SubsSummable} follows if the empty family is also summable.

The outer sum over the index set $P$ on the right side of \eqref{Eq:InsertAssoc} is awkward, in the sense that $P$ itself might not be a ``natural'' index set, even if it is somewhat canonical.  In particular, prefix associativity (when $+$ is the induced addition) is not a consequence of insertive associativity; its outer sum is the induced addition, which is a sum over the index set $\{0,1\}$ rather than the partition $\{\{0\},\N\setminus \{0\}\}$.  (Moreover, there is a reindexing of the set $\N\setminus\{0\}$ to $\N$.)  Typically, insertive associativity is present only when reindexing invariance also holds, and so the partition $P$ can be replaced by any indexing of the partition.  Further, when Axiom \ref{Axiom:EmptyAddsNada} holds, then we can also freely allow some indexes to correspond to empty subsets of $I$.

Consider what happens when we put the previous two paragraphs together.  If we assume Axiom \ref{Axiom:ReindexInvariance}, Axiom \ref{Axiom:EmptyAddsNada}, and Axiom \ref{Axiom:InsertAssociativity}, then together they imply the following merger of the axioms.

\begin{axiom}[\bf Monoid merger\rm ]\label{Axiom:PreAbelian}
If $\underline{a}\in X^I$ is summable, and if $\psi\colon K\to \power(I)$ is a map such that for each $i\in I$ there is a unique $k\in K$ such that $i\in \psi(k)$, then
\[
\Sigma \underline{a}=\Sigma\big(\Sigma (a_i)_{i\in \psi(k)} \big)_{k\in K}
\]
and all sums on the right side are defined.
\end{axiom}

This single axiom captures nearly every other axiom we have defined to this point, and under some very weak nontriviality conditions it captures fully what it means for a summation system to generalize finite addition.

\begin{thm}\label{Thm:FiniteExtensions}
Let $X$ be a set with a nonempty summation system $\Sigma$ satisfying \textup{Axiom \ref{Axiom:PreAbelian}}.  Putting $Y:=\im(\Sigma)$, and letting $\Sigma'$ be the restriction of $\Sigma$ to families over $Y$, then $\Sigma'$ as a summation system on $Y$ satisfies all of numbered axioms previously stated in the paper, except possibly finite-totality and prefix associativity.

Additionally, if the induced addition $+$ of $\Sigma'$ is total and associative, then there is a least summation system $\Sigma''\supseteq \Sigma'$ on $Y$ satisfying \textup{Axiom \ref{Axiom:PreAbelian}} such that $\dom(\Sigma'')$ is closed under taking finite extensions of its elements.  Finite-totality and prefix associativity hold for $\Sigma''$.
\end{thm}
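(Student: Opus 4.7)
The plan is in three phases: derive each required axiom for $\Sigma'$ from cleverly chosen instances of \textup{Axiom \ref{Axiom:PreAbelian}}; then build $\Sigma''$ by attaching finitely many extra summands via the induced addition; and finally check that the extension is the least one with all the required properties.

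\emph{Phase one (axioms for $\Sigma'$).} Fix any summable $\underline{a}\in X^I$ and set $s:=\Sigma\underline{a}$. The merger applied with $K=\{0,1\}$, $\psi(0)=I$, $\psi(1)=\emptyset$ simultaneously forces $()$ to be summable---call $e:=\Sigma()$---and forces $\Sigma(s,e)=s$.  Applying the merger to $()$ with every $\psi(k)$ empty shows that each constant family $(e)_{k\in K}$ sums to $e$.  For singletons in $Y$: given $s\in Y$ and a singleton set $J$, pick $\underline{c}\in X^L$ with $\Sigma\underline{c}=s$ and apply the merger via $\psi\colon J\to\power(L)$ sending the unique element to $L$.  \textup{Axiom \ref{Axiom:ReindexInvariance}} follows by $\psi(i')=\{\varphi(i')\}$ after reducing inner singletons; \textup{Axiom \ref{Axiom:SubsSummable}} uses the two-part partition $(I',I\setminus I')$; the forward direction of \textup{Axiom \ref{Axiom:EmptyAddsNada}} uses the same partition together with the constant-family computation and $\Sigma(s,e)=s$.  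The reverse direction---extending summable $\underline{a}\in Y^I$ by zeros to $\underline{b}\in Y^J$---comes from applying the merger to the original $\underline{a}$ with $\psi\colon J\to\power(I)$ sending $i\in I$ to $\{i\}$ and $j\in J\setminus I$ to $\emptyset$, so that the compound sum evaluates to exactly $\Sigma\underline{b}$.  \textup{Axiom \ref{Axiom:InsertAssociativity}} is a special case of \textup{Axiom \ref{Axiom:PreAbelian}}, and the latter transfers from $\Sigma$ to $\Sigma'$ because all inner and outer sums automatically lie in $Y=\im(\Sigma)$.

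\emph{Phase two (construction of $\Sigma''$).} The hypothesis that $+$ is total and associative, combined with the axioms just verified, makes $(Y,+)$ a commutative monoid with identity $e$ (commutativity coming from reindexing on $\{0,1\}$).  Declare $\underline{b}\in Y^J$ to be $\Sigma''$-summable exactly when some cofinite $I'\subseteq J$ has $\underline{b}|_{I'}$ being $\Sigma'$-summable, and set
\[
\Sigma''\underline{b}:=\Sigma'(\underline{b}|_{I'})+\sum_{j\in J\setminus I'}b_j,
\]
with the second a finite $+$-sum.  Well-definedness across two witnesses $I_1,I_2$ is obtained by two-part merger for $\Sigma'$ applied separately to $\underline{b}|_{I_1}$ and $\underline{b}|_{I_2}$, reducing both candidate sums to $\Sigma'(\underline{b}|_{I_1\cap I_2})+\sum_{j\in J\setminus(I_1\cap I_2)}b_j$.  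Containment $\Sigma'\subseteq\Sigma''$, closure under finite extensions, finite-totality (via $I'=\emptyset$), and reindexing invariance are then all immediate, and prefix associativity follows once merger for $\Sigma''$ is in hand via the partition $\{\{0\},\N\setminus\{0\}\}$ combined with that reindexing.

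The main obstacle is verifying \textup{Axiom \ref{Axiom:PreAbelian}} for $\Sigma''$.  Given $\Sigma''$-summable $\underline{a}\in Y^I$ with witness $F:=I\setminus I'$ finite, and $\psi\colon K\to\power(I)$ as in the axiom, let $K'':=\{k\in K:\psi(k)\cap F\neq\emptyset\}$, which is finite because $F$ is.  Applying merger for $\Sigma'$ to $\underline{a}|_{I'}$ via the induced map $k\mapsto\psi(k)\cap I'$ yields $\Sigma'$-summable ``clean'' pieces $t_k:=\Sigma'(a_i)_{i\in\psi(k)\cap I'}$ with $\Sigma'(t_k)_{k\in K}=\Sigma'(\underline{a}|_{I'})$.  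Each $(a_i)_{i\in\psi(k)}$ differs from its clean restriction by at most $|F|$ coordinates and so is $\Sigma''$-summable with sum $s_k=t_k+\sum_{i\in\psi(k)\cap F}a_i$; then $(s_k)_{k\in K}$ agrees with $(t_k)_{k\in K}$ off the finite set $K''$, so is $\Sigma''$-summable, and a commutative-monoid rearrangement using $F=\bigsqcup_{k\in K''}(\psi(k)\cap F)$ gives
\[
\Sigma''(s_k)_{k\in K}=\Sigma'(t_k)_{k\in K\setminus K''}+\sum_{k\in K''}s_k=\Sigma'(t_k)_{k\in K}+\sum_{i\in F}a_i=\Sigma''\underline{a}.
\]
Leastness then comes out symmetrically: any $\Sigma^*\supseteq\Sigma'$ satisfying \textup{Axiom \ref{Axiom:PreAbelian}} and closed under finite extensions makes every such $\underline{b}$ $\Sigma^*$-summable (the finite extension from $\underline{b}|_{I'}$), and merger for $\Sigma^*$ via $(I',J\setminus I')$ forces $\Sigma^*\underline{b}$ to equal the defining formula for $\Sigma''\underline{b}$.
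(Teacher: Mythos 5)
Your proposal is correct, and its first phase is essentially the paper's argument: the same choices of $\psi$ establish that singletons sum simply, reindexing invariance, insertive associativity, subfamily summability, and both directions of Axiom \ref{Axiom:EmptyAddsNada}, with the transfer of Axiom \ref{Axiom:PreAbelian} from $\Sigma$ to $\Sigma'$ justified exactly as in the paper (all inner and outer sums land in $Y$). Where you genuinely diverge is the construction of $\Sigma''$. The paper builds it recursively, as the union of a chain $\Sigma=\Sigma_0\subseteq\Sigma_1\subseteq\cdots$ in which each step adjoins single-element extensions $\underline{a}\#_{\ell}x$ and Axiom \ref{Axiom:PreAbelian} is re-verified one added index at a time (with a unique contaminated block $k_0$); you instead give a closed-form description --- a family is $\Sigma''$-summable iff it is $\Sigma'$-summable off a finite set, with the finite remainder handled by iterated $+$ --- and verify Axiom \ref{Axiom:PreAbelian} in one pass by isolating the finite set $K''$ of blocks meeting $F$ and rearranging in the commutative monoid $(Y,+)$. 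Your route buys an explicit description of $\dom(\Sigma'')$ and makes the leastness argument essentially immediate (the two-part merger over $(I',J\setminus I')$ pins down $\Sigma^{\ast}\underline{b}$), at the cost of a well-definedness check across witnesses and slightly heavier bookkeeping in the merger verification; the paper's route trades that for an induction. One small point you rely on silently in several places (well-definedness across $I_1,I_2$, the evaluation of $\Sigma^{\ast}$ on $J\setminus I'$, and the recombination $\Sigma'(t_k)_{k\in K\setminus K''}+\sum_{k\in K''}t_k=\Sigma'(t_k)_{k\in K}$) is that finite $\Sigma'$-summable families sum by iterated induced addition; this does follow from insertive associativity, reindexing, and simple singleton sums by induction on the size, but it deserves a sentence.
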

\begin{proof}
We first show that singletons from $Y$ sum simply.  As $\Sigma$ is a nonempty summation system, we know that $Y\neq \emptyset$.  Let $y\in Y$ be arbitrary, and write $y=\Sigma \underline{a}$ for some summable family $\underline{a}\in X^I$, for some index set $I$.  For any singleton set $K=\{k\}$, let $\psi\colon K\to \power(I)$ be the map where $k\mapsto I$.  By Axiom \ref{Axiom:PreAbelian}, we have
\[
y=\Sigma \underline{a}=\Sigma\left(\Sigma \underline{a} \right)=\Sigma (y)=\Sigma'(y),
\]
where the subscripting by the singleton family $K$ was suppressed.

Now $1$-totality holds for $\Sigma'$.  Replacing $X$ by $Y$, we hereafter will assume that $X=Y$ and that $\Sigma=\Sigma'$.  Note that Axiom \ref{Axiom:PreAbelian} still holds for this redefined $\Sigma$.

To show reindexing invariance, let $\underline{a}=(a_i)_{i\in I}\in X^I$ be a summable family, and let $\varphi\colon I'\to I$ be a bijection.  Take $K:=I'$, and define $\psi\colon K\to \power(I)$ by the rule $i'\mapsto \{\varphi(i')\}$.  By Axiom \ref{Axiom:PreAbelian} and the fact that singletons sum simply,
\[
\Sigma \underline{a}=\Sigma\left(\Sigma(a_i)_{i\in \psi(k)}\right)_{k\in K}=\Sigma(\Sigma(a_i)_{i\in \{\varphi(i')\}})_{i'\in I'}=\Sigma(a_{\varphi(i')})_{i'\in I'}.
\]

Insertive associativity is clear, by applying Axiom \ref{Axiom:PreAbelian} where $K$ is any given partition of $I$ and where $\psi$ is the identity map on that partition.  As mentioned previously, insertive associativity also implies that nonempty subfamilies of summable families remain summable.

Next, we deal with the empty family.  Given any summable family $\underline{a}\in X^I$, let $K\supseteq I$ be an arbitrary superset of the index set.  Defining $\psi\colon K\to \power(I)$ by the rule
\[
\psi(k)=\begin{cases}\{k\} & \text{ if $k\in I$},\\
\emptyset & \text{ otherwise,}\end{cases}
\]
then Axiom \ref{Axiom:PreAbelian} yields
\begin{equation}\label{Eq:Computation1}
\Sigma \underline{a}=\Sigma \left(\Sigma(a_i)_{i\in \psi(k)} \right)_{k\in K}.
\end{equation}
In particular, when $I$ is a proper subset of $K$, at least one of the inner summands is an empty family, so $\Sigma()$ exists.  (Incidentally, now Axiom \ref{Axiom:SubsSummable} fully holds, as does $0$-totality.)  Returning to the general case when $I\subseteq K$, the family in the outer sum of the right side of \eqref{Eq:Computation1} is then an arbitrary core-extension of $\underline{a}$, which is summable with the same sum as $\underline{a}$.  Thus, the backwards direction of \eqref{Eq:Axiom5Implication} is true.  The forward direction also holds since subfamilies are summable.  As already noted, core-extensions have the same sum as the original family, so we obtain Axiom \ref{Axiom:EmptyAddsNada}.

Our next job is to construct $\Sigma''$.  Before that, we need some notation.  Given a family $\underline{a}\in X^I$, an index $\ell\notin I$, and an element $x\in X$, let $\underline{a}\#_{\ell}x$ be the new family $\underline{b}\in X^{I\cup\{\ell\}}$ where $b_{\ell}=x$ and $b_i=a_i$ for each $i\in I$.  This notation makes it easier to talk about singleton extensions of families, and hence, via iteration, all finite extensions.

Assume the induced addition $+$ is total and associative.  Define the set
\[
\Sigma_1:=\Sigma \cup \left\{(\underline{a}\#_{\ell} x ,\Sigma\underline{a}+x)\, :\, \underline{a}\in \dom(\Sigma)\cap X^I,\ \ell\notin I,\ x\in X\right\}.
\]
First, we will show that $\Sigma_1$ is a summation system on $X$, and subsequently it will have the same induced addition as $\Sigma$.

Letting $(\underline{b},y),(\underline{b},y')\in \Sigma_1$, the goal is to show that $y=y'$.  One possibility is that $\underline{b}=\underline{a}\#_{\ell}x$ with $y=\Sigma\underline{a}+x$.  If, similarly, $\underline{b}=\underline{a}'\#_{\ell'}x'$ with $y'=\Sigma\underline{a}'+x'$, then there are two subcases to consider.  First, it might happen that $\ell=\ell'$, hence $\underline{a}=\underline{a}'$ and $x=x'$, which implies that $y=y'$.  The second subcase is when $\ell\neq \ell'$.  If we write $\underline{a}=\underline{a}''\#_{\ell'}x'$, then $\underline{a}'=\underline{a}''\#_{\ell}x$.  By Axiom \ref{Axiom:PreAbelian} and the fact that $+$ is associative and commutative,
\[
y=\Sigma\underline{a}+x=(\Sigma\underline{a}''+x')+x=(\Sigma\underline{a}''+x)+x'=\Sigma\underline{a}'+x'=y'.
\]
Another possibility is that $(\underline{b},y')\in \Sigma$, and so
\[
y=\Sigma\underline{a}+x=\Sigma\underline{b}=y',
\]
where the second equality follows from another easy application of Axiom \ref{Axiom:PreAbelian}.  All other possibilities are handled similarly.

Second, we verify Axiom \ref{Axiom:PreAbelian} for $\Sigma_1$.  This holds for free for families in $\dom(\Sigma)$, so it suffices to show the relevant equality for a family $\underline{b}\in \dom(\Sigma_1)\setminus \dom(\Sigma)$.  Write $\underline{b}=\underline{a}\#_{\ell} x$ for some $\underline{a}\in \dom(\Sigma)\cap X^I$, some $\ell\notin I$, and some $x\in X$.   Let $\psi\colon K\to \power(I\cup\{\ell\})$ be a function such that for each element of $I\cup\{\ell\}$, that element belongs to the image of exactly one element from $K$.  Our goal is to prove the equality
\begin{equation}\label{Eq:ToShowTricky}
\Sigma_1\underline{b} = \Sigma_1\left(\Sigma_1 (b_j)_{j\in \psi(k)}\right)_{k\in K}
\end{equation}

There is some unique $k_0\in K$ with $\ell\in \psi(k_0)$.  Note that $(a_j)_{j\in \psi(k_0)\setminus \{\ell\}}$ is a subfamily of $\underline{a}$, hence $\Sigma$-summable.  Thus,
\[
(b_j)_{j\in \psi(k_0)}=(a_j)_{j\in \psi(k_0\setminus \{\ell\}}\#_{\ell} x,
\]
which is $\Sigma_1$-summable with sum $\Sigma(a_j)_{j\in \psi(k_0)\setminus \{\ell\}}+x$.  For any other input in $\psi$, things work out more simply; given $k\in K\setminus\{k_0\}$, then $(b_j)_{j\in \psi(k)}=(a_j)_{j\in \psi(k)}$, which is $\Sigma_1$-summable since it is already $\Sigma$-summable, with $\Sigma_1$-sum $\Sigma (a_j)_{j\in \psi(k)}$.  Consequently, we have now shown that the inner sums on the right side of \eqref{Eq:ToShowTricky} are all defined.

For use shortly, note that since Axiom \ref{Axiom:PreAbelian} holds for $\Sigma$, we have
\begin{equation}\label{Eq:Needed}
\Sigma\underline{a} = \Sigma\left(\Sigma(a_j)_{j\in \psi(k)\cap I}\right)_{k\in K}.
\end{equation}
Now, continuing on, the family indexed by $K$ on the right side of \eqref{Eq:ToShowTricky} is exactly
\begin{equation}\label{Eq:UsedNeeded}
(\Sigma (a_j)_{j\in \psi(k)})_{k\in K\setminus \{k_0\}}\#_{k_0} (\Sigma(a_j)_{j\in \psi(k_0)\setminus\{\ell\}}+x).
\end{equation}
Note that the family to the left of the $\#$ symbol in \eqref{Eq:UsedNeeded} is $\Sigma$-summable, being a subfamily of the $\Sigma$-summable family on the right side of \eqref{Eq:Needed}.  Thus, the family in \eqref{Eq:UsedNeeded} is $\Sigma_1$-summable, because it is the extension of a $\Sigma$-summable family by a singleton from $X$.  Hence, the right side of \eqref{Eq:ToShowTricky} is fully defined and equals
\begin{equation}\label{Eq:FinalRightSide}
\Sigma\left(\Sigma(a_j)_{j\in \psi(k)}\right)_{k\in K\setminus \{k_0\}}+(\Sigma(a_j)_{j\in \psi(k_0)\setminus \{\ell\}}+x).
\end{equation}

On the other hand, by combining \eqref{Eq:Needed} with another easy application of Axiom \ref{Axiom:PreAbelian}, the left side of \eqref{Eq:ToShowTricky} is
\[
\Sigma_1 \underline{b}=\Sigma\underline{a}+x=\left[\Sigma\left(\Sigma(a_j)_{j\in \psi(k)}\right)_{k\in K\setminus \{k_0\}} + \Sigma(a_j)_{j\in \psi(k_0)\setminus \{\ell\}}\right]+x.
\]
By associativity of $+$, this equals the quantity in \eqref{Eq:FinalRightSide}, as needed.

Now, recursively repeating these ideas, we obtain a chain of summation systems
\[
\Sigma=\Sigma_0\subseteq \Sigma_1\subseteq \ldots,
\]
where $\dom(\Sigma_{n+1})$ contains all singleton extensions of elements in $\dom(\Sigma_{n})$, and $\Sigma_{n}$ satisfies Axiom \ref{Axiom:PreAbelian}, for each $n\in \N$.  Taking $\Sigma'':=\bigcup_{n\in \N}\Sigma_n$, then its domain is closed under taking finite extensions of its elements, and it satisfies Axiom \ref{Axiom:PreAbelian}

To finish the proof it suffices to show that $\Sigma''$ is contained in every summation system $\Sigma^{\ast}\supseteq \Sigma$ satisfying Axiom \ref{Axiom:PreAbelian} where $\dom(\Sigma^{\ast})$ is closed under taking finite extensions of its elements.  By recursion, it suffices to show that $\Sigma_{1}\setminus \Sigma_0\subseteq \Sigma^{\ast}$.  Given $(\underline{b},y)\in \Sigma_1\setminus \Sigma_0$, write $\underline{b}=\underline{a}\#_{\ell} x$ for some family $\underline{a}\in \dom(\Sigma)\cap X^I$, some index $\ell\notin I$, and some $x\in X$.

Since $\underline{b}$ is a finite extension of the $\Sigma$-summable family $\underline{a}$, we have $\underline{b}\in \dom(\Sigma^{\ast})$.  The fact that the induced additions for $\Sigma$ and $\Sigma^{\ast}$ agree yields
\[
\Sigma^{\ast}\underline{b}=\Sigma^{\ast}\underline{a}+\Sigma^{\ast}(x)_{j\in \{\ell\}}=\Sigma\underline{a}+\Sigma(x)_{j\in \{\ell\}}=\Sigma\underline{a}+x=\Sigma_1\underline{b},
\]
where the first equality is an easy application of Axiom \ref{Axiom:PreAbelian} for $\Sigma^{\ast}$.

The last sentence of the theorem is now clear.
\end{proof}

The possibility implicit in Theorem \ref{Thm:FiniteExtensions}, that $\im(\Sigma)\subsetneq X$, is quite real.  Consider a set $X$ with a distinguished element $x_0\in X$.  If we take $\Sigma$ to be the constant summation system that sends \emph{every} family to $x_0$, then surjectivity of $\Sigma$ fails if $|X|\geq 2$.  There are, of course, more complicated versions of such collapsing behavior.  In such cases, it is natural to replace $X$ by $\im(\Sigma)$, as we did in the previous proof, in which case surjectivity now holds for the restricted system.

There are situations where the extended system $\Sigma''$ in Theorem \ref{Thm:FiniteExtensions} cannot be extended further without violating some previously stated axiom.  For instance, let $(X,+)$ be a nontrivial {\bf finite} abelian group, and let $\Sigma$ be the obvious summation system defined only on those families with finitely many nonzero entries (by the rule that the sum is just the addition of its nonzero entries).  Any proper extension of $\dom(\Sigma)$ would contain a family with a nonzero entry occurring infinitely often.  After passing to a countable constant subfamily and reindexing by $\N$ (assuming the appropriate axioms allowing these reductions still hold), then Theorem \ref{Thm:WeakSwindle} tells us that prefix associativity cannot hold.

Theorem \ref{Thm:FiniteExtensions} suggests another natural axiom for our consideration, which has the special consequence of finite-totality as long as the empty sum exists.

\begin{axiom}[\bf Additive extension closure\rm ]\label{Axiom:AdditiveExtClosure}
Given $\ell\notin I$ and $x\in X$, if $\underline{a}\in X^I$ is summable, then $\underline{a}\#_{\ell}x$ is summable, with $\Sigma \underline{a}\#_{\ell}x=\Sigma \underline{a}+x$.
\end{axiom}

Note that if $\Sigma$ is nonempty, surjective, and satisfies both Axiom \ref{Axiom:PreAbelian} and Axiom \ref{Axiom:AdditiveExtClosure}, then the induced addition $+$ makes $X$ a commutative \emph{monoid} (a semigroup with an identity).  Further, by Theorem \ref{Thm:FiniteExtensions}, all previously numbered axioms hold.  (Consequently,  it must be the case that $(X,+)$ is not a group or $\aleph_0$-totality fails, by Theorem \ref{Thm:WeakSwindle}.)  In some sense, this is all that can be said about generalizing binary addition into an infinite form.  However, we will see in the next section that by viewing algebra operations in a slightly different ``functorial'' way, other connections to finite addition arise.

To end this section, we mention an intriguing trick involving associativity, called the \emph{Eilenberg-Mazur swindle}, based on work of Mazur in \cite{Mazur} and a lemma attributed to Eilenberg and Mazur in \cite{Bass}.  Taking the viewpoint that associativity means that we can shift parentheses, then we should have the following relationship between two constant sums:
\begin{equation}\label{Eq:ShiftEquality}
\Sigma(a+b)_{i\in \N} = a + \Sigma(b+a)_{i\in \N}.
\end{equation}
In other words, conflating summation with addition, we should be able to shift parentheses to the right, and get
\[
(a+b)+(a+b)+\cdots = a + (b+a)+(b+a)+\cdots.
\]
With $\N$-totality (to guarantee that the unparenthesized sum exists) together with insertive associativity and a little reindexing invariance, then \eqref{Eq:ShiftEquality} holds.  The swindle is expressed as follows.

\begin{thm}\label{Thm:NoAdditiveInverse}
Let $(X,+)$ be a magma with an additive identity $0$.  Further, let $\Sigma$ be a summation system where \textup{\eqref{Eq:ShiftEquality}} holds when all sums involved are defined.  If the constant family $(0)_{i\in \N}$ is summable and sums to $0$, then the only element of $X$ with an additive inverse is $0$.
\end{thm}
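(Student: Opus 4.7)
The plan is to apply the swindle identity \eqref{Eq:ShiftEquality} directly, exploiting the fact that if $a$ has an additive inverse $b$, then $a+b = 0 = b+a$, so the two constant families appearing in \eqref{Eq:ShiftEquality} both collapse to the zero family, which is assumed summable with sum $0$.

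Concretely, the first step is to fix any $a \in X$ admitting an additive inverse $b$ with $a+b = 0 = b+a$, and to observe that the constant family $(a+b)_{i\in\N}$ is identically the zero family $(0)_{i\in\N}$, and likewise for $(b+a)_{i\in\N}$. By hypothesis, $\Sigma(0)_{i\in\N}$ exists and equals $0$, so both sides of \eqref{Eq:ShiftEquality} are defined for this choice of $a$ and $b$. I would then invoke \eqref{Eq:ShiftEquality} to obtain
\[
0 = \Sigma(a+b)_{i\in\N} = a + \Sigma(b+a)_{i\in\N} = a + 0 = a,
\]
where the final equality uses that $0$ is an additive identity. This forces $a = 0$, completing the proof.

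The only potential snag is verifying that both sums in \eqref{Eq:ShiftEquality} are defined, since that is the condition under which the hypothesis supplies the identity; but this is automatic here because both sums are literally the summable family $(0)_{i\in\N}$. In particular, the argument requires the additive inverse to be two-sided so that $b+a$ also equals $0$; with only a one-sided inverse there would be no reason for $\Sigma(b+a)_{i\in\N}$ to exist, and the swindle would stall. Given that caveat, the proof is immediate and short — the work of the theorem really lies in having previously isolated \eqref{Eq:ShiftEquality} as the operative consequence of associativity plus totality plus reindexing.
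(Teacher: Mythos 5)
Your proof is correct and is essentially identical to the paper's: both substitute $a+b=b+a=0$ into \eqref{Eq:ShiftEquality}, use the summability of $(0)_{i\in\N}$ with sum $0$ to see that both sides are defined, and conclude $a=0$ from $0=a+0=a$. Your remark that the inverse must be two-sided makes explicit a point the paper leaves implicit, but the argument is the same.
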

\begin{proof}
Given $a\in X$, assume that $b\in X$ is an additive inverse.  From the hypotheses on $0$,
\[
\Sigma(a+b)_{i\in \N}=\Sigma(0)_{i\in \N}=0
\]
and
\[
a+\Sigma(b+a)_{i\in \N}=a+\Sigma(0)_{i\in \N}=a+0=a.
\]
The equality $a=0$ follows from \eqref{Eq:ShiftEquality}.
\end{proof}

An immediate consequence of Theorem \ref{Thm:NoAdditiveInverse} is that there is no way to unknot a nontrivial knot by tying another knot afterwards.  Similarly, a nontrivial module cannot be made trivial by (direct) summing it with another module.  In both cases even more is true; a sum is zero if and only if all summands are zero.  Minor modifications of our axioms allow one to prove such facts for these and other summation systems (e.g., see \cite[Proposition 3.10]{AndresMHeunen}).

\section{Negation and functoriality}

In previous sections we discussed at length how to infinitize the notions of commutativity, associativity, and the existence of an additive identity.  The natural next step is to consider additive inverses, which we will write via negations.  Throughout this section, let $(X,+,-,0)$ be an abelian group, which we write more simply as $(X,+)$.

For the moment, suppose that $\Sigma$ is a summation system on $X$ whose induced addition is the given group operation $+$.  Further, to simplify the following discussion, assume that both Axiom \ref{Axiom:PreAbelian} and Axiom \ref{Axiom:AdditiveExtClosure} hold.  In particular, Axiom \ref{Axiom:EmptyAddsNada} holds.

Let $\underline{a}\in \dom(\Sigma)$.  Since $\Sigma\underline{a}$ happens to have an additive inverse, let $\underline{b}$ be an extension of $\underline{a}$ by the single entry $-\Sigma\underline{a}\in X$.  As Axiom \ref{Axiom:AdditiveExtClosure} is in effect, $\underline{b}\in \dom(\Sigma)$.  Moreover, the new sum is $\Sigma\underline{b}=0$.

These calculations suggest that the existence of additive inverses is infinitized when one may adjoin to any summable family a single entry that makes the sum zero.  No new axioms are needed to accomplish this feat, as long as $X$ happens to already be a group under the induced addition.

An alternative perspective arises by viewing the group operations on $X$ in a functorial manner.  To that end, let $I\subseteq J$ be index sets.  We may view
\[
X^I\subseteq X^J
\]
without affecting summations on such families, by using zero extensions.

Before pursuing this perspective further, we note that this viewpoint leads to the formation of a \emph{canonical index set}, capturing all the information in $\Sigma$.  To see this, let $\underline{a}$ be an arbitrary $\Sigma$-summable family.  Assuming, by way of contradiction, that $x\in X\setminus\{0\}$ occurs infinitely often in $\underline{a}$, then the constant family $(x)_{i\in \N}$ is also summable (by passing to a subfamily using Axiom \ref{Axiom:SubsSummable}, and then reindexing by Axiom \ref{Axiom:ReindexInvariance}), which contradicts Theorem \ref{Thm:WeakSwindle}.  Thus, each nonzero element of $X$ can appear only finitely many times in a given summable family, and hence the number of nonzero entries in any summable family is at most
\[
\kappa:=\max(|X|,\aleph_0).
\]
After reindexing, and adding or dropping zero terms as needed, any summable family can be viewed as belonging to $X^{\kappa}$.  In particular, those readers worried about set-theoretical assumptions about classes or universes, but working under the assumptions made in this discussion, can safely work with the \emph{fixed} index set $\kappa$.

Assume now only that $\Sigma$ is a summation system on $X$.  Also assume that $(X,+,-,0)$ is an additive group (although this could be weakened).  Functorially extend the group operations to the direct product $X^I$, for each set $I$, by applying them componentwise.  Abusing notation slightly, we will denote this product group as $(X^I,+,-,0)$, thereby using the same symbols for these componentwise operations, leaving it to the reader to correctly parse different uses of these symbols.  Further, given $\underline{a}\in X^I$ and $\underline{b}\in X^J$, put $\underline{a}+\underline{b}\in X^{I\cup J}$, via zero extensions.  Considering how $\Sigma$ interacts with componentwise operations motivates:

\begin{axiom}[\bf Addition functoriality\rm ]\label{Axiom:FunctorialAdd}
If $\underline{a}$ and $\underline{b}$ are summable, then $\underline{a}+\underline{b}$ is summable and
\[
\Sigma[\underline{a}+\underline{b}]=\Sigma\underline{a}+\Sigma\underline{b}.
\]
\end{axiom}

\begin{axiom}[\bf Negation functoriality\rm ]\label{Axiom:FunctorialNeg}
If $\underline{a}$ is summable, then $-\underline{a}$ is summable and
\[
\Sigma[-\underline{a}]=-\Sigma\underline{a}.
\]
\end{axiom}

These functoriality axioms force $\Sigma$ to be a group homomorphism on $\dom(\Sigma)\cap X^I$, if it is nonempty, with the natural operations making $\dom(\Sigma)\cap X^I$ a group.  Addition functoriality is equivalent to another simple condition, as described in the following proposition.

\begin{prop}\label{Prop:AddFunct}
Let $(X,+)$ be an abelian group with a summation system $\Sigma$.  If $\Sigma$ is surjective and satisfies \textup{Axiom \ref{Axiom:PreAbelian}} and \textup{Axiom \ref{Axiom:AdditiveExtClosure}}, then \textup{Axiom \ref{Axiom:FunctorialAdd}} holds if and only if $\underline{a}+\underline{b}$ is summable whenever $\underline{a}$ and $\underline{b}$ are summable families with distinct index sets.
\end{prop}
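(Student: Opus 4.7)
The forward direction is immediate from Axiom \ref{Axiom:FunctorialAdd}. For the converse, first note that the hypotheses of Theorem \ref{Thm:FiniteExtensions} are all in place (surjectivity, Axiom \ref{Axiom:PreAbelian}, and the induced addition is total and associative since $X$ is an abelian group), so $\Sigma$ satisfies every previously numbered axiom; in particular I may freely use reindexing invariance, subfamily summability, and the empty-sum axioms. The plan is to bootstrap the hypothesis into Axiom \ref{Axiom:FunctorialAdd} by first establishing the case of disjoint index sets and then reducing the general overlapping case to it via a pairing trick.

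The disjoint case is essentially free: if $\underline{a}\in X^I$ and $\underline{b}\in X^J$ are summable with $I\cap J=\emptyset$ and both sets nonempty (so $I\neq J$), the hypothesis delivers summability of $\underline{a}+\underline{b}\in X^{I\cup J}$, and Axiom \ref{Axiom:PreAbelian} applied with the two-block partition $\{I,J\}$ of $I\cup J$ immediately collapses the outer sum to $\Sigma\underline{a}+\Sigma\underline{b}$. The degenerate cases in which one of $I,J$ is empty follow from the empty-sum axioms.

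For the general case, decompose $I\cup J=U\sqcup V\sqcup W$ where $U=I\setminus J$, $V=I\cap J$, $W=J\setminus I$. The main step---and the main obstacle, since the hypothesis never directly touches families with the same index set---is to promote disjoint-index summability into componentwise summability on the shared block $V$. To do this, pick a bijection $\varphi\colon V'\to V$ with $V'\cap V=\emptyset$ and use reindexing invariance to transport $\underline{b}|_V$ to a summable family $\underline{b}'=(b_{\varphi(v')})_{v'\in V'}$ with $\Sigma\underline{b}'=\Sigma\underline{b}|_V$. The disjoint case applied to $\underline{a}|_V$ and $\underline{b}'$ makes $\underline{a}|_V+\underline{b}'$ summable on $V\sqcup V'$ with sum $\Sigma\underline{a}|_V+\Sigma\underline{b}|_V$. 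Now apply Axiom \ref{Axiom:PreAbelian} with the partition of $V\sqcup V'$ into the two-element sets $\{v,\varphi^{-1}(v)\}$ indexed by $v\in V$: each inner sum collapses to $a_v+b_v$, so the family $(a_v+b_v)_{v\in V}$ is summable with sum $\Sigma\underline{a}|_V+\Sigma\underline{b}|_V$.

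The assembly is then routine. The three families $\underline{a}|_U$, $(a_v+b_v)_{v\in V}$, and $\underline{b}|_W$ are indexed by the pairwise disjoint sets $U$, $V$, $W$, so iterating the disjoint case makes their componentwise sum---which is exactly $\underline{a}+\underline{b}$---summable on $I\cup J$. One final application of Axiom \ref{Axiom:PreAbelian} with the three-block partition $\{U,V,W\}$, combined with the decompositions $\Sigma\underline{a}=\Sigma\underline{a}|_U+\Sigma\underline{a}|_V$ and $\Sigma\underline{b}=\Sigma\underline{b}|_V+\Sigma\underline{b}|_W$ (obtained by Axiom \ref{Axiom:PreAbelian} on $\underline{a}$ and $\underline{b}$ separately) together with commutativity in $(X,+)$, yields $\Sigma[\underline{a}+\underline{b}]=\Sigma\underline{a}+\Sigma\underline{b}$.
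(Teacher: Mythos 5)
Your proof is correct and takes essentially the same route as the paper's: reindex to force disjoint index sets, invoke the hypothesis there, and then regroup with the monoid merger axiom in two different ways to identify the concatenated sum with both $\Sigma\underline{a}+\Sigma\underline{b}$ and $\Sigma[\underline{a}+\underline{b}]$. The paper does this in a single step (reindexing all of $\underline{b}$ to a disjoint copy and regrouping $\underline{a}+\underline{b}'$ by the fibers over $I\cup J$), whereas you carry out the same regrouping blockwise over $I\setminus J$, $I\cap J$, and $J\setminus I$; the mechanism is identical.
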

\begin{proof}
The forward direction is a tautological weakening.  For the backwards direction, let $\underline{a},\underline{b}\in \dom(\Sigma)$.  By Theorem \ref{Thm:FiniteExtensions}, reindexing invariance holds, and so we can let $\underline{b}'\in \dom(\Sigma)$ be a reindexing of $\underline{b}$ such that its index set is disjoint from the index set of $\underline{a}$.  By hypothesis, $\underline{c}:=\underline{a}+\underline{b}'$ is summable.  Also, finite sums agree with iterated addition, by Axiom \ref{Axiom:AdditiveExtClosure}.

Inserting parentheses in two different ways in the sum $\Sigma\underline{c}$, via two uses of the monoid merger axiom, we obtain the equality needed for addition functoriality.
\end{proof}

When the index sets for $\underline{a}$ and $\underline{b}$ are disjoint, and when $+$ is the induced addition, one can think of Axiom \ref{Axiom:FunctorialAdd} as a form of \emph{flattening}.  In essence, it allows the removal of finitely many parentheses, which is a common property of summation systems found in the literature.

Series summation for absolutely convergent series (allowing reindexing, as well as core-extensions and core-restrictions) satisfies both of the functoriality axioms.  Endomorphism summation also satisfies both axioms.  For these two summation systems, the underlying set $X$ is not only an additive abelian group, but also a ring.  Thus, in both cases, $X^I$ is a ring for each index set $I$, as well as an $X$-$X$-bimodule.  Functoriality axioms could likewise be developed for scaling and multiplication, but in Sections \ref{Section:Distributivity} and \ref{Section:EndosReorderable} we establish even stronger axioms.  Before getting to that, we investigate topological considerations.

\section{Recovering topological information}\label{Section:TopologyRecovered}

In previous sections the focus was on generalizing the abelian group axioms to an infinite setting, with reindexing invariance generalizing commutativity.  This excluded conditionally convergent series summation from most considerations.  In this section the focus is on topology inherent in a summation system, and series summation will play a central, motivational role.  Consequently, much more care is given to the order of summands.

Summation systems are notationally minimal, and in this section it is preferable to preserve this simplicity by defining orderings only implicitly.  We accomplish this task by working with ordinals, which are naturally well-ordered.  Letting ${\rm Ord}$ denote the class of ordinals, and letting $i,j\in {\rm Ord}$, then $j<i$ is synonymous with $j\in i$.  Similarly, $j\leq i$ is synonymous with $j\in i+1$.

In this section and the next we will only work with families indexed by subsets $I\subseteq {\rm Ord}$.  Any such set $I$ is well-ordered by the membership relation.  By the \emph{initial segment} of $I$ determined by $i\in {\rm Ord}$, we mean the set
\[
I_{<i}:=\{j\in I\, :\, j<i\} = I\cap i.
\]
The sets $I_{\leq i}$, $I_{>i}$, and $I_{\geq i}$ are defined similarly.  Note that $I$ is (uniquely) order-isomorphic to a unique ordinal $\sigma(I)\in {\rm Ord}$, which is called its \emph{order type}.  A few results in this paper will be proved by transfinite induction over order types.

Throughout this section $(X,+,-,0)$, or more shortly $(X,+)$, is an abelian group, and $\Sigma$ is a summation system on $X$.  It is not necessarily the case that $+$ is $\Sigma$'s induced addition.  Context will allow readers to differentiate between the additive identity $0\in X$ and the ordinal $0\in {\rm Ord}$.  The motivation for introducing an additive structure on $X$ is that, under the usual addition on $\R$, series summation originates from topology via limits of partial sums.  If we forget the topology, but retain the corresponding summation system, this raises the question of how much of the topology we can reconstruct.

Thankfully, defining limits is straightforward.  Intuitively, we can think of the existence of $\lim_{i\to \infty}s_i$ as saying that the finite sums
\[
s_0-0,\qquad (s_0-0)+(s_1-s_0),\qquad (s_0-0)+(s_1-s_0)+(s_2-s_1),\qquad \ldots
\]
are the partial sums of a convergent infinite summation.  This is a viewpoint taken, for example, in \cite[Definition 4]{KatzStraus}.

Generalizing, we make the following definition of limits indexed by ordinals.

\begin{definition}\label{Definition:Limits}
Let $(X,+)$ be an abelian group with a summation system $\Sigma$.  Given an arbitrary family $\underline{s}=(s_i)_{i\in I}\in X^{I}$ where $I\subseteq {\rm Ord}$, recursively define its \emph{$\Sigma$-limit}, denoted $\lim_{i\in I}s_{i}$ or $\lim \underline{s}$, to be the $\Sigma$-sum of the difference family
\[
d(\underline{s}):= \left(s_{i}-\textstyle{\lim_{j\in I_{<i}}s_{j}}\right)_{i\in I}
\]
when this family exists and is summable, otherwise the $\Sigma$-limit is undefined.
\end{definition}

The axioms introduced in previous sections may be specialized to situations that preserve the ordering of summands.  Further, such specializations often directly affect the properties that $\Sigma$-limits enjoy.  To demonstrate this relationship, consider the following order-preserving weakening of reindexing invariance.

\begin{axiom}[\bf Ordinal reindexing invariance\rm ]\label{Axiom:OrdinalReindexInvariance}
If $I,I'\subseteq {\rm Ord}$, if $\varphi\colon I'\to I$ is an order isomorphism, and if $(a_i)_{i\in I}$ is summable, then $(a_{\varphi(i')})_{i'\in I'}$ is summable with the same sum.
\end{axiom}

If Axiom \ref{Axiom:OrdinalReindexInvariance} holds, then rather than working with subsets $I\subseteq {\rm Ord}$ one could more simply work with elements of ${\rm Ord}$, after replacing $I$ with $\sigma(I)$.  Also, when sums have ordinal reindexing invariance, then $\Sigma$-limits do too.  (The easy proof is left to the reader.)  Note that such ordinal reindexing invariance is the norm for topological limits, and so it is natural to assume Axiom \ref{Axiom:OrdinalReindexInvariance} when one demands that $\Sigma$-limits should have behaviors similar to those of topological limits.

Having specialized Axiom \ref{Axiom:ReindexInvariance}, next comes Axiom \ref{Axiom:SubsSummable}.  Before that, we need to delve deeper into understanding Definition  \ref{Definition:Limits}.  Note that the difference family $d(\underline{s})$ exists if and only if all proper initial segments of $\underline{s}$ have $\Sigma$-limits.  Consequently, all proper initial segments of $d(\underline{s})$ are summable.  In the other direction, we now prove that when all initial segments of a family are summable, then its total sum arises as a ($\Sigma$-)limit of partial sums, just as one might hope is true.

\begin{prop}\label{Prop:LimitsPartialSums}
Let $(X,+)$ be an abelian group with a summation system $\Sigma$, and fix a family $\underline{a}\in X^{I}$ with $I\subseteq {\rm Ord}$.  Assuming that all proper initial segments of $\underline{a}$ are summable, then for each $i\in I$ define the $i$th partial sum as
\[
s_i:=\Sigma (a_j)_{j\in I_{<i}}+a_i.
\]
The partial sum family $p(\underline{a}):=(s_i)_{i\in I}$ has a $\Sigma$-limit if and only if $\underline{a}$ is summable, in which case $\lim_{i\in I}s_i=\Sigma \underline{a}$.  Hence, there is a natural bijection between families whose initial segments are summable and families with $\Sigma$-limits.
\end{prop}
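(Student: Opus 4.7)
My plan is to prove the identity $d(p(\underline{a})) = \underline{a}$ of indexed families. Once this identity is in hand, \textup{Definition \ref{Definition:Limits}} immediately yields both directions of the biconditional: $\lim_{i\in I} s_i$ exists if and only if the difference family $d(p(\underline{a})) = \underline{a}$ is summable, with equal values.

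To establish the identity, I plan a transfinite induction on $i \in I$ proving the auxiliary claim that $(s_j)_{j \in I_{<i}}$ has a $\Sigma$-limit, equal to $\Sigma(a_j)_{j \in I_{<i}}$. For the inductive step, unfolding \textup{Definition \ref{Definition:Limits}} expresses this $\Sigma$-limit (when defined) as the $\Sigma$-sum of the family whose $k$th entry, for $k \in I_{<i}$, is $s_k - \lim_{j \in I_{<k}} s_j$. The inductive hypothesis applied at each $k < i$ gives $\lim_{j \in I_{<k}} s_j = \Sigma(a_j)_{j \in I_{<k}}$, so the $k$th entry collapses to $(\Sigma(a_j)_{j \in I_{<k}} + a_k) - \Sigma(a_j)_{j \in I_{<k}} = a_k$. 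Thus the difference family is exactly $(a_k)_{k \in I_{<i}}$, and since $I_{<i}$ is a proper initial segment of $I$, the hypothesis on $\underline{a}$ ensures this family is summable with sum $\Sigma(a_j)_{j \in I_{<i}}$, completing the inductive step (the base case being trivial agreement of two empty sums). The same computation applied at the level of $I$ itself then gives $d(p(\underline{a})) = \underline{a}$.

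For the final bijection sentence, the inverse assignment is $\underline{s} \mapsto d(\underline{s})$; the proper initial segments of $d(\underline{s})$ are exactly the difference families of proper initial segments of $\underline{s}$, which are summable by definition of $\Sigma$-limit applied to those initial segments. The identity $p(d(\underline{s})) = \underline{s}$ follows from the same style of unfolding: the $i$th partial sum of $d(\underline{s})$ equals $\Sigma(d(\underline{s})_j)_{j\in I_{<i}} + d(\underline{s})_i = \lim_{j\in I_{<i}} s_j + \bigl(s_i - \lim_{j\in I_{<i}} s_j\bigr) = s_i$ by the very definition of $\lim_{j\in I_{<i}} s_j$ as $\Sigma$ of that first difference family. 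The only real care required is keeping the transfinite recursion well-founded, so that every appeal to $\lim$ on a strictly smaller initial segment is justified before it is invoked; no axioms beyond basic abelian group arithmetic and the existence of the sums stipulated by hypothesis are needed.
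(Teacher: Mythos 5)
Your proposal is correct and follows essentially the same route as the paper: a transfinite induction establishing $\lim_{j\in I_{<i}}s_j=\Sigma(a_j)_{j\in I_{<i}}$ on proper initial segments, the resulting identity $d(p(\underline{a}))=\underline{a}$, and then the definition of $\Sigma$-limit giving the biconditional. The only cosmetic difference is in the bijection step, where you verify $p\circ d=\mathrm{id}$ directly while the paper instead notes that $d$ is injective; both yield the same conclusion.
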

\begin{proof}
Working by transfinite induction on the order type, assume that the needed equality holds for order types smaller than $\sigma(I)$, which guarantees that
\[
\textstyle{\lim_{j\in I_{<i}}}s_j=\Sigma (a_j)_{j\in I_{<i}}
\]
for each $i\in I$.  The difference family of $p(\underline{a})$ is then defined and satisfies
\[
d(p(\underline{a}))=(s_i-\textstyle{\lim_{j\in I_{<i}}}s_j)_{i\in I} = \left(\Sigma (a_j)_{j\in I_{<i}}+a_i-\Sigma (a_j)_{j\in I_{<i}}\right)_{i\in I}=(a_i)_{i\in I}=\underline{a}.
\]
Definitionally, $p(\underline{a})$ has a $\Sigma$-limit exactly when its difference family is summable, in which case the $\Sigma$-limit of $p(\underline{a})$ is $\Sigma d(p(\underline{a}))$, which equals $\Sigma \underline{a}$, thus verifying the needed equality.

The domain of the ``difference family map'' $d$ is the collection of all families whose proper initial segments have $\Sigma$-limits.  Similarly, the domain of the ``partial sums map'' $p$ is the collection of all families whose proper initial segments are summable.  The discussion in the paragraph preceding this proposition verifies the inclusion $\im(d)\subseteq \dom(p)$.  The inclusion $\im(p)\subseteq \dom(d)$ and the equality $d\circ p={\rm id}_{\dom(p)}$ were both verified in the first paragraph of this proof.  Finally, note that $d$ is injective.  (Given distinct families in the domain of $d$, then their difference families disagree at the smallest ordinal index where the two families disagree.)  Putting all that information together shows that $d$ and $p$ are inverse bijections on their respective domains.  The final sentence of the proposition then easily follows by restricting those domains (by removing the word ``proper'' in both cases).
\end{proof}

This motivates the following weakening of Axiom \ref{Axiom:SubsSummable}.

\begin{axiom}[\bf Initial summability\rm ]\label{Axiom:InitialSegments}
Initial segments of ordinal-indexed summable families are summable.
\end{axiom}

According to Proposition \ref{Prop:LimitsPartialSums}, an immediate benefit of Axiom \ref{Axiom:InitialSegments} is that every ordinal-indexed $\Sigma$-sum will contribute an ordered pair to the collection of $\Sigma$-limits.  Otherwise, some of the ordinal-indexed families in $\Sigma$ are unused when constructing $\Sigma$-limits.

For any $\Sigma$-limit to exist, the recursive nature of Definition \ref{Definition:Limits} forces the empty family to be summable.  At first it might seem strange to define $\lim ()=\Sigma ()$, for in a topological space there is no empty limit.  However, when $\Sigma()$ is defined, we may treat $X$ as a pointed space, with $\Sigma()$ as its distinguished point.  In that case it is a simple matter of convention to take the empty limit to equal the distinguished point, whenever convenient.

Potentially, $\Sigma$-limits may be topological limits with respect to more than one topology.  Incredibly, there is always a finest topology where this happens, as a consequence of the following ``fundamental theorem of topological limits.''  It is stated in terms of net limits, rather than merely well-ordered limits, for the sake of generality.

\begin{prop}\label{Prop:FinestTopologyWithLimitsDetermined}
Given a set $X$, let $\mathscr{A}$ be a collection of ordered pairs of the form $(n,x)$, where $n$ is a net in $X$ and $x\in X$.  There is a finest topology such that $x$ is a topological limit of the net $n$, for each such pair.
\end{prop}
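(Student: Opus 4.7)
The plan is to construct the desired topology explicitly via its open sets, rather than passing through a lattice argument. Call a subset $U\subseteq X$ \emph{admissible} if, for every $(n,x)\in \mathscr{A}$ with $x\in U$, the net $n$ is eventually in $U$. Let $\tau_{\mathscr{A}}$ be the collection of admissible sets. I first verify that $\tau_{\mathscr{A}}$ is a topology on $X$: both $\emptyset$ and $X$ are trivially admissible; arbitrary unions of admissible sets are admissible because a net eventually in a single summand is eventually in the union; and closure under binary (hence finite) intersection is the only real step, where one invokes the directedness of the net's index set to combine two eventuality witnesses into a single one. This last point is the only place where care is needed, but it is the standard directed-set argument.

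Next, by construction, for each $(n,x)\in \mathscr{A}$ and each $\tau_{\mathscr{A}}$-open $U$ with $x\in U$, the net $n$ is eventually in $U$. Hence $n$ converges to $x$ in $\tau_{\mathscr{A}}$, confirming that $\tau_{\mathscr{A}}$ is a topology realizing all the prescribed convergences.

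Finally I would show that $\tau_{\mathscr{A}}$ is the finest such topology. Let $\tau$ be any topology on $X$ for which every pair $(n,x)\in \mathscr{A}$ satisfies $n\to x$. Given any $U\in \tau$ and any $(n,x)\in \mathscr{A}$ with $x\in U$, convergence $n\to x$ in $\tau$ means $n$ is eventually in $U$; thus $U$ is admissible, i.e., $U\in \tau_{\mathscr{A}}$. Therefore $\tau\subseteq \tau_{\mathscr{A}}$, completing the proof. The chief potential obstacle is purely the finite-intersection verification, which is routine once directedness is invoked; no choice-like hypothesis is needed because the supremum is produced directly rather than as a join over all eligible topologies.
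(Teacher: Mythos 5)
Your construction of $\tau_{\mathscr{A}}$ as the collection of admissible sets, with the verification that it is a topology (using directedness for finite intersections) and that every topology realizing the prescribed convergences is contained in it, is exactly the argument given in the paper. The proposal is correct and takes essentially the same approach.
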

\begin{proof}
Let $\tau$ be the set of subsets $U\subseteq X$ such that for each $(n,x)\in \mathscr{A}$, if $x\in U$, then $n$ is eventually in $U$.  This condition is required of open sets in any topology where $x$ is a limit of $n$, for each such pair.  Thus, once we verify that $\tau$ is a topology, it will be the finest one with the desired property.

Vacuously, $\emptyset \in \tau$, while $X\in \tau$ holds trivially.

Next, suppose that $U,V\in \tau$.  Let $(n,x)\in \mathscr{A}$ with $x\in U\cap V$.  Then $x\in U$ and $x\in V$.  Hence, $n$ is eventually in $U$ and eventually in $V$.  Thus, $n$ is eventually in $U\cap V$, starting at any upper bound for both a place where $n$ has become stable in $U$ and a similar place for $V$.

Finally, suppose that $S\subseteq \tau$.  Let $(n,x)\in \mathscr{A}$ with $x\in \bigcup_{V\in S}V$.  Fix $U\in S$ such that $x\in U$.  Then $n$ is eventually in $U$.  Consequently, $n$ is eventually in the union (starting at any place where $n$ has become stable in $U$).
\end{proof}

Topologists might recognize Proposition \ref{Prop:FinestTopologyWithLimitsDetermined}, and its proof, as an alternate formulation of the \emph{final topology} construction \cite[p.\ 32]{Bourbaki}.  We will not pursue that connection in this paper.

We can now recover a topology from a summation system, as follows.

\begin{definition}\label{Def:SigmaTopology}
Let $(X,+)$ be an abelian group and let $\Sigma$ be a summation system on $X$.  The \emph{$\Sigma$-topology}, denoted $\tau_{\Sigma}$, is the finest topology where $\Sigma$-limits are topological limits, meaning that if $x\in X$ is the $\Sigma$-limit of some sequence $\underline{s}$, then $x$ is a $\tau_{\Sigma}$-limit of $\underline{s}$.
\end{definition}

This is the best possible topology to construct from $\Sigma$, if all we assume is that $\Sigma$-limits should have arisen from topological limits.  Moreover, $\tau_{\Sigma}$ has a concrete description; from the proof of Proposition \ref{Prop:FinestTopologyWithLimitsDetermined} we know that $U\in \tau_{\Sigma}$ if and only if whenever $\underline{s}$ has a $\Sigma$-limit in $U$ then a tail of $\underline{s}$ is in $U$.

If we happen to know more about how $\Sigma$ was formed, we could possibly recover an even better topology.  This idea will be explored later in Example \ref{Example:InducedFromTrivial} and in Section \ref{Section:UncondSums}.

Any $\Sigma$-limit is always unique when defined, but topological limits are not generally unique.  We must thus be careful on this point, unless the Hausdorff property holds.  Yet, if there is at least one topology $\tau$ such that $\Sigma$-limits are \emph{unique} $\tau$-limits, then the same is true for $\tau_{\Sigma}$, since $\tau\subseteq \tau_{\Sigma}$.

When working with the $\Sigma$-topology one will generally want to assume Axiom \ref{Axiom:InitialSegments}, else some important information in $\Sigma$ is immediately lost when passing to $\tau_{\Sigma}$.  For instance, if the empty set is not summable, then $\tau_{\Sigma}$ is the discrete topology on $X$, regardless of whether or not there are summable families.  In any case, there is an order-reversing relationship between summation systems and their corresponding topologies.

\begin{lemma}\label{Lemma:TopologyReversal}
Let $(X,+)$ be an abelian group with summation systems $\Sigma$ and $\Sigma'$.  If $\Sigma\subseteq \Sigma'$, then $\tau_{\Sigma}\supseteq \tau_{\Sigma'}$.
\end{lemma}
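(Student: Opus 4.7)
The plan is to reduce the lemma to a single intermediate claim, namely that every $\Sigma$-limit is also a $\Sigma'$-limit with the same value. Once that claim is established, $\tau_{\Sigma'}$ is by its very definition a topology on $X$ in which every $\Sigma'$-limit is a topological limit, and hence \emph{a fortiori} a topology in which every $\Sigma$-limit is a topological limit. Since Proposition \ref{Prop:FinestTopologyWithLimitsDetermined} characterizes $\tau_\Sigma$ as the finest such topology, we must have $\tau_{\Sigma'}\subseteq \tau_\Sigma$, which is exactly the stated containment.

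To prove the intermediate claim I will use transfinite induction on the order type $\sigma(I)$ of the index set $I\subseteq {\rm Ord}$. Assume that $\underline{s}=(s_i)_{i\in I}$ has a $\Sigma$-limit; unpacking Definition \ref{Definition:Limits}, this means that for every $i\in I$ the initial restriction $(s_j)_{j\in I_{<i}}$ already has a $\Sigma$-limit, and that the resulting difference family $d(\underline{s})$ is $\Sigma$-summable. Each proper initial segment $I_{<i}$ is a well-ordered subset of ${\rm Ord}$ whose order type is strictly less than $\sigma(I)$, so the inductive hypothesis applies and yields that $(s_j)_{j\in I_{<i}}$ has a $\Sigma'$-limit which coincides with its $\Sigma$-limit. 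Therefore the $\Sigma'$-difference family of $\underline{s}$ exists and agrees entry-by-entry with $d(\underline{s})$; because $\Sigma\subseteq \Sigma'$, this common family is $\Sigma'$-summable with the same sum. Hence $\underline{s}$ has a $\Sigma'$-limit equal to its $\Sigma$-limit. The base case $\sigma(I)=0$ is subsumed here: the empty difference family is $\Sigma$-summable precisely when $()\in \dom(\Sigma)$, in which case $\Sigma'()=\Sigma()$ follows from the set-theoretic inclusion.

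The main obstacle is really just the bookkeeping of the transfinite induction — specifically, verifying that the recursive unfolding of Definition \ref{Definition:Limits} always passes to initial segments of strictly smaller order type, which legitimizes the inductive step. Everything else amounts to a straightforward propagation of existence and values of $\Sigma$-sums along the inclusion $\Sigma\subseteq\Sigma'$, followed by the extremal characterization of the $\Sigma$-topology supplied by Proposition \ref{Prop:FinestTopologyWithLimitsDetermined}.
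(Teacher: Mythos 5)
Your proposal is correct and follows essentially the same route as the paper: establish that every $\Sigma$-limit is also a $\Sigma'$-limit, then invoke the finest-topology characterization of $\tau_{\Sigma}$ from Definition \ref{Def:SigmaTopology}. The only difference is that the paper asserts the first step without comment, whereas you supply the (correct) transfinite induction justifying that the recursively defined difference families coincide.
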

\begin{proof}
Assuming $\Sigma\subseteq \Sigma'$, then each $\Sigma$-limit is also a $\Sigma'$-limit.  Thus, any topology where $\Sigma'$-limits are topological also has topological $\Sigma$-limits.  Finally, by the fineness statement in Definition \ref{Def:SigmaTopology}, we have $\tau_{\Sigma'}\subseteq \tau_{\Sigma}$.
\end{proof}

We now return to the problem of connecting axioms for summation systems to topological properties.  Recall that partial sums are defined recursively in Proposition \ref{Prop:LimitsPartialSums}, using both the additive group operation and the summation system information.  One might hope that they could be computed more simply as the sum up to, and including, the last summand.  This is expressed axiomatically, as follows.

\begin{axiom}[\bf Postfix associativity\rm ]\label{Axiom:PostfixAssociativity}
Given a set $I\subseteq {\rm Ord}$ with a last element $i_{\rm max}$, if $(a_i)_{i\in I}\in X^{I}$ is summable, then $(a_i)_{i\in I\setminus\{i_{\rm max}\}}$ is summable, and when both are summable
\[
\Sigma(a_i)_{i\in I}=\Sigma(a_i)_{i\in I\setminus\{i_{\rm max}\}} + a_{i_{\rm max}}.
\]
\end{axiom}

This axiom is a sort of reversal of prefix associativity, working from the tail end rather than the front.  It significantly simplifies the computation of any $\Sigma$-limit whose order type is a successor ordinal (matching what happens in a $T_1$ topology), as we now show.

\begin{lemma}\label{Lemma:SuccessorLimits}
Let $(X,+)$ be an abelian group with a summation system $\Sigma$ satisfying postfix associativity.  If $\underline{s}\in X^{I}$ has a $\Sigma$-limit, where $I\subseteq {\rm Ord}$ has a last element $i_{\rm max}$, then that $\Sigma$-limit is the final member $s_{i_{\rm max}}$.
\end{lemma}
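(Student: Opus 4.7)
The plan is to unfold Definition \ref{Definition:Limits} one step from the top and then exploit postfix associativity on the difference family. Since $\underline{s}\in X^{I}$ has a $\Sigma$-limit, the difference family $d(\underline{s})=(s_i-\lim_{j\in I_{<i}}s_j)_{i\in I}$ must both be defined (so in particular the partial limit $L:=\lim_{j\in I_{<i_{\rm max}}}s_j$ exists) and be $\Sigma$-summable, with $\lim\underline{s}=\Sigma d(\underline{s})$.

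Next, since $I_{<i_{\rm max}}=I\setminus\{i_{\rm max}\}$, I would apply Axiom \ref{Axiom:PostfixAssociativity} to $d(\underline{s})$. This yields that the truncation $(s_i-\lim_{j\in I_{<i}}s_j)_{i\in I_{<i_{\rm max}}}$ is summable, and
\[
\Sigma d(\underline{s}) \;=\; \Sigma\bigl(s_i-\textstyle{\lim_{j\in I_{<i}}s_j}\bigr)_{i\in I_{<i_{\rm max}}} + \bigl(s_{i_{\rm max}}-L\bigr).
\]
The small but essential bookkeeping step is to observe that for any $i<i_{\rm max}$ one has $(I_{<i_{\rm max}})_{<i}=I_{<i}$; consequently the truncated family is nothing other than $d\bigl((s_j)_{j\in I_{<i_{\rm max}}}\bigr)$. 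Its $\Sigma$-sum is therefore, by Definition \ref{Definition:Limits} applied to the restricted family, exactly $L$.

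Substituting back, $\lim\underline{s}=L+(s_{i_{\rm max}}-L)=s_{i_{\rm max}}$, where the cancellation uses only the abelian group structure on $X$. No transfinite induction is needed, because postfix associativity pushes everything down to a single application on $d(\underline{s})$, and the agreement of initial segments makes the inner sum an honest $\Sigma$-limit rather than merely a partial sum.

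The only point requiring care, which I would call the main (mild) obstacle, is the indexing identification $(I_{<i_{\rm max}})_{<i}=I_{<i}$ for $i<i_{\rm max}$; without it one cannot conclude that the truncated difference family recomputes as the difference family of the truncated $\underline{s}$, and the argument would stall at an expression that is formally a $\Sigma$-sum but not visibly a $\Sigma$-limit of the expected initial segment. Once that identification is in hand, the lemma drops out.
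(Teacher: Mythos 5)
Your proposal is correct and follows essentially the same route as the paper: apply postfix associativity to the difference family $d(\underline{s})$, identify the truncated difference family with $d\bigl((s_j)_{j\in I_{<i_{\rm max}}}\bigr)$, and cancel. The paper compresses your final computation into a citation of Proposition \ref{Prop:LimitsPartialSums} (which records that $p(d(\underline{s}))=\underline{s}$), but the content is identical.
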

\begin{proof}
Assume $\underline{s}$ has a $\Sigma$-limit.  The family $\underline{a}:=d(\underline{s})$ then exists and is summable.  Thus,
\[
\lim \underline{s}=\Sigma \underline{a}=\Sigma (a_i)_{i\in I\setminus\{i_{\rm max}\}} + a_{i_{\rm max}}=s_{i_{\rm max}},
\]
where we use postfix associativity for the middle equality, and we use Proposition \ref{Prop:LimitsPartialSums} for the last equality (since $\underline{s}=p(\underline{a})$).
\end{proof}

If $(a_0)\in X^{\{0\}}$ is summable, postfix associativity is somewhat special, as it tells us that $\Sigma(a_0)=\Sigma()+a_0$.  Additionally, if all $\{0\}$-indexed singletons sum simply, then $\Sigma()$ is the additive identity $0\in X$, as one might hope would be true.

Still assuming $\{0\}$-indexed families sum simply, and also assuming $\{0,1\}$-totality, the case of postfix associativity where $I=\{0,1\}$ tells us that the additive structure $(X,+)$ agrees with the induced addition from $\Sigma$.

Another useful form of associativity can hold in the well-ordered setting.  To that end, let $\underline{a}=(a_i)_{i\in I}\in X^{I}$ be a summable family with $I\subseteq {\rm Ord}$.  Letting $P$ be a partition of $I$ into (disjoint, nonempty) subintervals, then insertive associativity would assert the equality
\[
\Sigma \underline{a} = \Sigma \left(\Sigma (a_i)_{i\in J}\right)_{J\in P}.
\]
After some minor reindexing, one may replace the index set $P$ with any transversal $K$ of the intervals, in which case all index sets are collections of ordinals.  Being a transversal consisting of ordinals, the set $K$ keeps the intervals in the appropriate order.  Putting this all together leads to the following axiom (which follows from Axioms \ref{Axiom:ReindexInvariance} and \ref{Axiom:InsertAssociativity}).

\begin{axiom}[\bf Ordinal insertive associativity\rm ]\label{Axiom:OrdinalAssociativity}
If $\underline{a}=(a_i)_{i\in I}\in X^{I}$ is summable, where $I\subseteq {\rm Ord}$, then
\[
\Sigma\underline{a}=\Sigma\big(\Sigma(a_i)_{i\in J_k}\big)_{k\in K}
\]
for any transversal $K$ of a partition $\{J_k\}_{k\in K}$ of $I$ into subintervals, with $k\in J_k$ for each $k\in K$ (and all sums on the right side exist).
\end{axiom}

A special consequence of this axiom is that any subinterval of an ordinal-indexed summable family is summable, not just initial segments.  Also, this powerful associativity axiom is useful when wanting $\Sigma$-limits to satisfy another desirable property---cofinal subsequences have the same limit---as in the next lemma.

\begin{lemma}\label{Lemma:SubsequenceLimits}
Let $(X,+)$ be an abelian group with a summation system $\Sigma$.  If $\Sigma$ satisfies both \textup{Axiom \ref{Axiom:PostfixAssociativity}} and \textup{Axiom \ref{Axiom:OrdinalAssociativity}}, then
\begin{equation}\label{Eq:SubsequenceProperty}
\text{for each sequence with a $\Sigma$-limit, every cofinal subsequence has the same $\Sigma$-limit.}
\end{equation}
\end{lemma}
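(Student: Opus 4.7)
The plan is to prove the lemma by identifying the difference family $d(\underline{s}')$ with the ``outer'' family that arises from applying ordinal insertive associativity to $d(\underline{s})$ under a partition of $I$ determined by the cofinal set $I'$.

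First I would verify a useful preliminary: $\Sigma()=0\in X$ whenever the index set $I$ of $\underline{s}$ is nonempty. Applying Axiom~\ref{Axiom:OrdinalAssociativity} to $\underline{a}:=d(\underline{s})$ with the trivial one-block partition $\{I\}$ and any transversal $\{k_0\}\subseteq I$ forces the singleton $(\Sigma\underline{a})_{\{k_0\}}$ to be summable and to sum to $\Sigma\underline{a}$; then Axiom~\ref{Axiom:PostfixAssociativity} applied to that singleton gives $\Sigma\underline{a}=\Sigma()+\Sigma\underline{a}$, so $\Sigma()=0$ by cancellation. The case $I=\emptyset$ is trivial, since then $\underline{s}'$ is also empty and both limits equal $\Sigma()$.

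Next, for each $i'\in I'$ I define $J_{i'}:=\{i\in I\,:\,i'=\min\{j'\in I'\,:\,j'\geq i\}\}$, which by cofinality and well-ordering is well-defined and gives a partition of $I$ into subintervals with $i'\in J_{i'}$; concretely $J_{i'}$ is an initial segment of $I$ when $i'=\min I'$, a half-open interval $(i'_-,i']\cap I$ when $i'$ has an immediate predecessor $i'_-$ in $I'$, and the singleton $\{i'\}$ when $i'$ is a limit of $I'$. With $I'$ as transversal, Axiom~\ref{Axiom:OrdinalAssociativity} makes the ``outer'' family $\underline{b}=(\Sigma(a_j)_{j\in J_{i'}})_{i'\in I'}$ summable with $\Sigma\underline{b}=\Sigma\underline{a}=\lim\underline{s}$. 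The heart of the argument is a strong transfinite induction on $i'\in I'$ showing simultaneously that $\underline{s}'|_{I'_{<i'}}$ has a $\Sigma$-limit and that $\underline{b}_{i'}=d(\underline{s}')_{i'}$. Existence of $\lim\underline{s}'|_{I'_{<i'}}$ follows from the inductive hypothesis (which lets us replace $d(\underline{s}')|_{I'_{<i'}}$ by $\underline{b}|_{I'_{<i'}}$, a summable subinterval of the summable $\underline{b}$); a second application of Axiom~\ref{Axiom:OrdinalAssociativity} to $\underline{a}$ restricted to $L_{<i'}:=\bigcup_{j'<i'}J_{j'}$ then identifies $\lim\underline{s}'|_{I'_{<i'}}$ with $\Sigma(a_j)_{j\in L_{<i'}}$. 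Since $I_{\leq i'}=L_{<i'}\cup J_{i'}$ as a disjoint union, a final application of Axiom~\ref{Axiom:OrdinalAssociativity} collapsed via postfix associativity and singletons-summing-simply yields $\Sigma(a_j)_{j\in I_{\leq i'}}=\Sigma(a_j)_{j\in L_{<i'}}+\Sigma(a_j)_{j\in J_{i'}}$, and Lemma~\ref{Lemma:SuccessorLimits} identifies the left side with $s_{i'}$. Rearranging in the abelian group, $\underline{b}_{i'}=s_{i'}-\lim\underline{s}'|_{I'_{<i'}}=d(\underline{s}')_{i'}$.

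The conclusion then drops out: $\underline{b}=d(\underline{s}')$ pointwise, so $d(\underline{s}')$ is summable with $\Sigma d(\underline{s}')=\Sigma\underline{b}=\lim\underline{s}$, i.e.\ $\lim\underline{s}'=\lim\underline{s}$. The main obstacle is bookkeeping: the shape of $J_{i'}$ depends on whether $i'$ is the minimum of $I'$, a successor, or a limit in $I'$, and one must verify subinterval-summability in each application of ordinal insertive associativity. The most delicate edge case is $i'=\min I'$, where $L_{<i'}=\emptyset$ and the decomposition degenerates to $\Sigma()+\Sigma(a_j)_{j\in J_{i'}}$; this works precisely because the preliminary step pinned down $\Sigma()=0$.
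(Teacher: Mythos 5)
Your proposal is correct and follows essentially the same route as the paper's proof: the same partition of $I$ into subintervals $J_{i'}$ indexed by the cofinal set, the same derivation of $\Sigma()=0$ from the trivial partition plus postfix associativity, and the same key identity $d(\underline{s}')_{i'}=\Sigma(a_j)_{j\in J_{i'}}$ established via Lemma \ref{Lemma:SuccessorLimits}. The only (cosmetic) difference is that you run the transfinite induction internally over $i'\in I'$ to identify $d(\underline{s}')$ with the outer family entrywise, whereas the paper inducts on the order type $\sigma(I)$ and applies the full statement \eqref{Eq:SubsequenceProperty} to cofinal subsets of proper initial segments.
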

\begin{proof}
Assume $\underline{s}\in X^I$ has a $\Sigma$-limit.  Thus, the difference family $\underline{a}:=d(\underline{s})$ exists and is summable.  By transfinite induction, assume \eqref{Eq:SubsequenceProperty} holds when restricted to sequences with order type smaller than $\sigma(I)$.  Let $K$ be cofinal in $I$.  The subfamily $(s_k)_{k\in K}$ has difference family $(s_k -\lim_{\ell\in K_{<k}}s_{\ell})_{k\in K}$, which exists by the inductive hypothesis, since $K_{<k}$ is cofinal in a proper initial segment of $I$ for each $k\in K$. Our goal is to show that the new difference family is summable, with the same sum as the original difference family $\underline{a}$.  This is clear when $I=\emptyset$, so hereafter we assume $I\neq \emptyset$.

The cofinal set $K$ induces a partition of $I$ into subintervals; each $k\in K$ indexes the interval
\[
J_k:=\{i\in I_{\leq k}\, :\, i>j\text{ for each $j\in K_{<k}$}\}.
\]
By Axiom \ref{Axiom:OrdinalAssociativity} it suffices to show, for each $k\in K$, that
\begin{equation}\label{Eq:SubseqInduct}
s_k-\lim_{\ell\in K_{<k}}s_{\ell}=\Sigma\left(a_i\right)_{i\in J_k}.
\end{equation}

Fix $k\in K$, and let $k':=\min J_k$.  By considering the trivial partition of $I$ into a single interval, and using both assumed axioms, we have $\Sigma \underline{a}=\Sigma (\Sigma \underline{a})_{j\in \{k\}}=\Sigma() + \Sigma\underline{a}$.  Thus, $\Sigma()=0$.  In particular, \eqref{Eq:SubseqInduct} holds if $K_{<k}=\emptyset$ by Lemma \ref{Lemma:SuccessorLimits}.  Thus, we may assume $K_{<k}\neq \emptyset$.  Since $K_{<k}$ is cofinal in $I_{<k'}$ and $\sigma(I_{<k'})<\sigma(I)$, then by the inductive assumption
\[
\lim_{\ell\in K_{<k}}s_{\ell}=\lim_{i\in I_{<k'}}s_i=\Sigma (a_i)_{i\in I_{<k'}}.
\]
Using Lemma \ref{Lemma:SuccessorLimits} for the first equality below, using Axiom \ref{Axiom:OrdinalAssociativity} for the second and fourth equalities, using Axiom \ref{Axiom:PostfixAssociativity} for the third equality, and suppressing the indexing on a singleton and on a pair, we find
\begin{align*}
s_{k} & =  \Sigma (a_i)_{i\in I_{\leq k}} = \Sigma(\Sigma (a_i)_{i\in I_{<k'}},\Sigma (a_i)_{i\in J_k}) =  \Sigma(\Sigma (a_i)_{i\in I_{<k'}}) +\Sigma (a_i)_{i\in J_k}\\
& = \Sigma (a_i)_{i\in I_{<k'}} +\Sigma (a_i)_{i\in J_k} =\lim_{\ell\in K_{<k}}s_{\ell}+\Sigma\left(a_i\right)_{i\in J_k}.
\end{align*}
This establishes \eqref{Eq:SubseqInduct}.
\end{proof}

There is a natural barrier that prevents, in almost all cases, an identification of topological limits with $\Sigma$-limits.  Indeed, suppose that topological $\tau$-limits are $\Sigma$-limits, and vice versa.  Given $x_0,x_1\in X$, consider the $\omega+1$ indexed sequence
\[
(x_0,x_1,x_0,x_1,\ldots,x_0).
\]
This clearly has a (topological) limit of $x_0$.  Initial segments of families with $\Sigma$-limits have $\Sigma$-limits, so $\underline{s}:=(x_0,x_1,x_0,x_1,\ldots)$ has a limit, say $x\in X$.  The subsequence $(x_0,x_0,\ldots)$ then has both $x$ and $x_0$ as limits.  Similarly, $(x_1,x_1,\ldots)$ has both $x$ and $x_1$ as limits.  But $\Sigma$-limits are necessarily unique, so $x_0=x=x_1$.  Thus, $|X|=1$.

The issue is that for $\Sigma$-limits to be defined, it is required that all initial subsequences also have (unique) $\Sigma$-limits, but that property is incompatible with general topological limits whenever $|X|\geq 2$.  Thus, to hope for a characterization of $\Sigma$-limits, we must restrict attention to the following special families.

\begin{definition}
Let $(X,\tau)$ be a topological space.  For any set $I\subseteq {\rm Ord}$, a family $\underline{s}\in X^I$ is \emph{gapless} if all nonempty initial subsequences of $\underline{s}$ have $\tau$-limits.
\end{definition}

Even when the summation system $\Sigma$ on $X$ arises from a topology where topological limits are unique, and even when restricting consideration to gapless families, we still do not expect all these gapless limits to be $\Sigma$-limits.  This is because the summation system $\Sigma$ might have been artificially restricted.  For example, consider the standard topology on $\R$ that gives rise to series summation.  The $\omega+1$ sequence
\[
\underline{s}=\left(0,1/2,2/3,\ldots,1\right)
\]
is gapless.  It can be viewed as the sequence of partial sums of the family
\[
\underline{a}=\left(0,1/2,1/6,\ldots, 0\right).
\]
However, if we declare, by fiat, that series summation is restricted to families of length no bigger than $\omega$, then the gapless family $\underline{s}$ has a topological limit but no $\Sigma$-limit.

Artificial restrictions can be avoided by defining sums as often as possible.  We describe how to do so in the next section.

We have discussed specializations of Axioms \ref{Axiom:ReindexInvariance}, \ref{Axiom:SubsSummable}, \ref{Axiom:EmptyExists}, \ref{Axiom:Singletons}, and \ref{Axiom:InsertAssociativity} to the well-ordered setting, as well as their effects on $\Sigma$-limits and the $\Sigma$-topology.  The other axioms behave similarly.  For example, if negation functoriality holds, then negation is continuous under $\tau_{\Sigma}$.  The interested reader is invited to investigate how translation functoriality, a weakening of addition functoriality, similarly influences $\tau_{\Sigma}$.  It seems to be a difficult problem to describe minimal algebraic axioms that would force $(X,+)$ to be a topological group under $\tau_{\Sigma}$.

Changing the implication in the statement of postfix associativity to a biconditional yields a form of additive extension closure, thus adjusting Axiom \ref{Axiom:AdditiveExtClosure} to the well-ordered setting.  One effect of this adjusted axiom is that ``gaps'' in $\Sigma$-limits can only occur at limit ordinals, not successors.

Axiom \ref{Axiom:EmptyAddsNada} likewise specializes, and is connected with the $T_1$ property for topologies.

\begin{prop}
Let $(X,+)$ be an abelian group with a summation system $\Sigma$.  If \eqref{Eq:SubsequenceProperty} holds, if ordinal-indexed singletons sum simply, and if zero entries can be adjoined as the tail of an ordinal-indexed summable family without changing the sum, then $\tau_{\Sigma}$ is $T_1$.
\end{prop}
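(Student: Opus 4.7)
The plan is to unfold Definition \ref{Def:SigmaTopology}. By the explicit description of $\tau_\Sigma$ following Proposition \ref{Prop:FinestTopologyWithLimitsDetermined}, $U \subseteq X$ is open precisely when every family with $\Sigma$-limit in $U$ has a tail in $U$. Hence $\tau_\Sigma$ is $T_1$ if and only if, for every $y \in X$ and every ordinal-indexed $\underline{s} = (s_i)_{i \in I}$ whose $\Sigma$-limit $x$ differs from $y$, one has $s_i \neq y$ eventually. I will prove the contrapositive: whenever $\underline{s}$ has $\Sigma$-limit $x$ and the set $J := \{i \in I : s_i = y\}$ is cofinal in $I$, we have $x = y$.

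If $\Sigma$ admits no convergent families then $\tau_\Sigma$ is discrete and automatically $T_1$, so I may assume $\Sigma()$ is defined (forced by Definition \ref{Definition:Limits} as soon as any nonempty ordinal-indexed family has a $\Sigma$-limit). My first step is to show $\Sigma() = 0$: adjoining a single zero as the tail of the empty family gives, by the zero-tail hypothesis, a summable singleton $(0)_{j \in \{0\}}$ whose sum equals $\Sigma()$; the singleton hypothesis says this sum equals $0$.

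The main technical step, and expected chief obstacle, is to show that every ordinal-indexed constant family $(y)_{j \in J}$ with nonempty $J$ and having a $\Sigma$-limit must have $\Sigma$-limit $y$. I would argue by transfinite induction on $\sigma(J)$. Writing $j_0 := \min J$, the existence of $\lim (y)_{j \in J}$ forces $d((y)_{j \in J})$ to exist, which in turn forces each $\lim (y)_{k \in J_{<j}}$ to exist for $j \in J$. At $j = j_0$ the entry of $d$ is $y - \Sigma() = y$. For $j > j_0$, the initial segment $J_{<j}$ is nonempty with strictly smaller order type, so the inductive hypothesis yields $\lim (y)_{k \in J_{<j}} = y$, and the corresponding entry of $d$ is $y - y = 0$. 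Thus $d((y)_{j \in J})$ coincides identically with the family obtained from the summable singleton $(y)_{j_0}$ (summable with sum $y$ by the singleton hypothesis) by adjoining a zero tail indexed by $J_{>j_0}$. The zero-tail hypothesis then guarantees summability with sum $y$, and hence $\lim (y)_{j \in J} = y$.

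To conclude: if $\underline{s}$ has $\Sigma$-limit $x$ and $J := \{i \in I : s_i = y\}$ is cofinal in $I$, the subsequence hypothesis \eqref{Eq:SubsequenceProperty} supplies that $(y)_{j \in J}$ has $\Sigma$-limit $x$, whence $x = y$ by the lemma. The delicate point is the limit-ordinal case of the induction: one must verify that the computed difference family literally equals a tail-zero extension of the singleton $(y)_{j_0}$, so that the zero-tail hypothesis applies directly, without recourse to reindexing invariance, associativity, or any stronger form of additive extension closure.
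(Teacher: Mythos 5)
Your proof is correct and follows essentially the same route as the paper's: reduce, via \eqref{Eq:SubsequenceProperty}, to showing that a nonempty constant family with a $\Sigma$-limit must converge to its constant value, and establish that by transfinite induction, computing the difference family entrywise and applying the singleton and zero-tail hypotheses. Your explicit preliminary verification that $\Sigma()=0$ is a welcome bit of extra care --- the paper phrases the auxiliary computation in terms of core-extensions, which tacitly presupposes $\Sigma()=0$ --- but it does not change the substance of the argument.
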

\begin{proof}
We start with an auxiliary computation.  Let $x\in X$, let $\emptyset\neq I\subseteq {\rm Ord}$, and let $\underline{s}=(x)_{i\in I}$ be a constant sequence with a $\Sigma$-limit.  By transfinite induction on the order type of $I$, it is straightforward to show that $d(\underline{s})$ is just the $I$-indexed core-extension of $(x)_{i\in \{i_0\}}$, where $i_{0}$ is the first element of $I$.  Such tail core-extensions do not change sums, so
\[
\lim \underline{s} = \Sigma d(\underline{s}) = \Sigma (x)_{i\in \{i_0\}} = x,
\]
with the singleton hypothesis used for the last equality.

Let $x\in X$ be arbitrary.  We now show that $X\setminus\{x\}\in \tau_{\Sigma}$.  To that end, assume $y\in X\setminus\{x\}$ is the $\Sigma$-limit of some nonempty family $\underline{s}$.  Further suppose, by way of contradiction, that no tail of $\underline{s}$ is in $X\setminus\{x\}$. Hence, $x$ occurs cofinally in $\underline{s}$.  Using \eqref{Eq:SubsequenceProperty}, after passing to a subsequence if necessary, we may as well assume that all members of $\underline{s}$ are $x$.  Thus, from the work in the previous paragraph, $x=\lim\underline{s} =y\in X\setminus\{x\}$, giving the needed contradiction.
\end{proof}

The natural bijection in Proposition \ref{Prop:LimitsPartialSums} behaves quite well with respect to core-extensions, when assuming some form of Axiom \ref{Axiom:EmptyAddsNada}.  For instance, in the previous proof, a tail core-extension of a singleton $\Sigma$-summable family corresponded to a constant family with a $\Sigma$-limit.  The reader is encouraged to explore what happens when (possibly non-tail) ordinal-indexed core-extensions have the same sum.

\section{Perfecting partial summation}\label{Section:PerfectingPartial}

The method of partial summation generalizes to families indexed by ordinals other than the first countable ordinal $\omega$, as follows.

\begin{definition}\label{Definition:InducedSummationFromTopology}
Let $(X,+)$ be an abelian group, and let $\tau$ be a topology on $X$.  By convention, treat the $\tau$-limit over the empty family as the distinguished element $0\in X$.

Given an arbitrary family $\underline{a}=(a_i)_{i\in I}\in X^I$, where $I\subseteq {\rm Ord}$, recursively define its \emph{induced sum}, denoted $\Sigma\underline{a}$, to be the $\tau$-limit of the family of partial sums
\[
p(\underline{a}):=\left(\Sigma(a_j)_{j\in I_{<i}}+a_i\right)_{i\in I}
\]
when the family exists and has a unique $\tau$-limit, otherwise the sum is undefined.  The function $\Sigma$ is called the \emph{induced summation system} on $X$, induced by $\tau$ and $+$.
\end{definition}

The induced summation system has some familiar properties from Section \ref{Section:TopologyRecovered}.

\begin{thm}\label{Thm:WhatPartialSummationGetsUs}
Let $(X,+)$ be an abelian group, and let $\tau$ be a topology on $X$.  Taking $\Sigma$ to be the induced summation system, then
\begin{itemize}
\item[\textup{(1)}] $\Sigma()=0$,
\item[\textup{(2)}] \textup{Axiom \ref{Axiom:OrdinalReindexInvariance}} holds,
\item[\textup{(3)}] \textup{Axiom \ref{Axiom:InitialSegments}} holds,
\item[\textup{(4)}] \textup{Axiom \ref{Axiom:PostfixAssociativity}} holds, and
\item[\textup{(5)}] a family has a $\Sigma$-limit if and only if all initial segments have unique $\tau$-limits, in which case the $\Sigma$-limit is the $\tau$-limit.
\end{itemize}
If $\tau$ is $T_1$, then further
\begin{itemize}
\item[\textup{(6)}] \textup{Axiom \ref{Axiom:PostfixAssociativity}} holds with the implication changed to a biconditional, and consequently
\begin{itemize}
\item[$\bullet$] ordinal-indexed singletons sum simply,
\item[$\bullet$] the induced addition agrees with $+$, and
\item[$\bullet$] finite-totality holds for ordinal-indexed families, with finite sums given by iterated addition, and
\end{itemize}
\item[\textup{(7)}] any number of zero entries can be adjoined as the tail of an ordinal-indexed summable family without changing the sum.
\end{itemize}
Moreover, if gapless families always have unique $\tau$-limits \textup{(}such as when the topology is Hausdorff\textup{)}, then
\begin{itemize}
\item[\textup{(8)}] property \eqref{Eq:SubsequenceProperty} holds.
\end{itemize}
\end{thm}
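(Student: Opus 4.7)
The plan is to unwind Definition \ref{Definition:InducedSummationFromTopology} for items (1)--(3). Item (1) holds because the partial sums family of $()$ is $()$, whose $\tau$-limit is $0$ by the stated convention. For (2), an order isomorphism $\varphi\colon I'\to I$ identifies initial segments bijectively, so a transfinite induction shows that the reindexed partial sums net $p(a_{\varphi(i')})_{i'\in I'}$ is obtained from $p(\underline{a})$ by the same reindexing, and reindexed nets share their $\tau$-limits. For (3), summability of $\underline{a}$ requires $p(\underline{a})$ to be defined, which forces every proper initial partial sum $\Sigma(a_j)_{j<k}$ to exist, giving summability of every initial segment of $\underline{a}$.

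For (4), observe that when $I$ has a maximum $i_{\max}$, the net $p(\underline{a})$ has largest index $i_{\max}$ with value $L := \Sigma(a_j)_{j<i_{\max}}+a_{i_{\max}}$; hence ``eventually in $U$'' reduces to $L\in U$, and the set of $\tau$-limits of $p(\underline{a})$ is precisely $\overline{\{L\}}$. Summability of $\underline{a}$ forces this set to be a singleton, so $\Sigma\underline{a}=L$. For (5), I would proceed by transfinite induction on $\sigma(I)$. The difference family $d(\underline{s})$ exists iff every proper initial subsequence of $\underline{s}$ has a $\Sigma$-limit, which by induction is iff every proper initial segment of $\underline{s}$ has a unique $\tau$-limit. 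Once $d(\underline{s})$ exists, a direct computation gives $p(d(\underline{s}))=\underline{s}$, so $\Sigma$-summability of $d(\underline{s})$ is precisely the assertion that $\underline{s}$ has a unique $\tau$-limit. Combining these yields both the biconditional and the equality of values.

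Under $T_1$, singletons are closed, so rerunning the (4) analysis in reverse gives the converse half of postfix associativity: if $(a_i)_{i\in I\setminus\{i_{\max}\}}$ is summable, then $p(\underline{a})$ is fully defined with last value $L$ and $\overline{\{L\}}=\{L\}$, yielding a unique $\tau$-limit. The three bulleted consequences in (6) then follow by a short induction on $|I|$, starting from $\Sigma()=0$ and applying the biconditional at each successor. For (7), the partial sums of the tail-zero extension agree with $p(\underline{a})$ on $I$ and are constantly $\Sigma\underline{a}$ on the appended tail (this last by a secondary induction on the tail's order type using $0+x=x$), so the net is eventually constant at $\Sigma\underline{a}$ and $T_1$ again forces unique convergence.

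For (8), I would invoke the standard cofinal subnet fact that any $\tau$-limit of $\underline{s}$ is also a $\tau$-limit of $(s_k)_{k\in K}$ whenever $K$ is cofinal in $I$. The heart of the argument is showing that $(s_k)_{k\in K}$ is itself gapless: any nonempty initial segment $(s_k)_{k\in K_{<i'}}$ is cofinal in a suitable initial segment $(s_j)_{j\in I_{<i''}}$ of $\underline{s}$, with $i''$ equal to $\sup(K_{<i'})$ or $\sup(K_{<i'})+1$ depending on whether that supremum lies in $K$. It therefore inherits a $\tau$-limit from the gaplessness of $\underline{s}$ established in (5); the hypothesis of (8) upgrades this to a unique $\tau$-limit, and (5) converts the result back to a $\Sigma$-limit that must equal $\lim\underline{s}$. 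The main obstacle is precisely this ordinal-level bookkeeping, namely checking in both the successor and limit cases that $K_{<i'}$ sits cofinally inside an appropriate initial segment of $I$.
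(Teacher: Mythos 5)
Your proposal is correct and follows essentially the same route as the paper's proof: unwinding Definition \ref{Definition:InducedSummationFromTopology} for (1)--(3), observing that an eventually constant partial-sum net has its last value as a limit for (4) (with $T_1$ giving the converse in (6)), proving (5) by transfinite induction via the identity $p(d(\underline{s}))=\underline{s}$, handling (7) by induction on the tail's order type, and deducing (8) from the fact that cofinal subsequences of gapless families are gapless. The only differences are cosmetic, e.g.\ identifying the limit set of an eventually constant net as $\overline{\{L\}}$ in (4), and spelling out the ordinal bookkeeping in (8) that the paper leaves implicit.
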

\begin{proof}
{\bf (1)}:  This is a consequence of the convention to treat the empty limit as $0$.

{\bf (2)}:  Axiom \ref{Axiom:OrdinalReindexInvariance} follows from the fact that topological limits are invariant under order-preserving reindexing.

{\bf (3)}: Axiom \ref{Axiom:InitialSegments} is immediate from the recursive nature of Definition \ref{Definition:InducedSummationFromTopology}.

{\bf (4)}:  Let $I\subseteq {\rm Ord}$ have a last element $i_{\rm max}$, and let $\underline{a}\in X^I$ be summable.  By definition, $\Sigma \underline{a}$ is the unique $\tau$-limit of the partial sum family $p(\underline{a})$.  The last member of $p(\underline{a})$ is
\[
s_{i_{\rm max}}:=\Sigma (a_i)_{i\in I\setminus\{i_{\rm max}\}}+a_{i_{\rm max}}.
\]
This last member is automatically a $\tau$-limit of $p(\underline{a})$, so by uniqueness $s_{i_{\rm max}}=\Sigma \underline{a}$.

{\bf (5)}:  First, assume that $\underline{s}$ has $\Sigma$-limit $x\in X$.  Definition \ref{Definition:Limits} says that $d(\underline{s})$ exists and $\Sigma d(\underline{s})=x$.  By Definition \ref{Definition:InducedSummationFromTopology}, $p(d(\underline{s}))$ exists and has a unique $\tau$-limit of $x$.  Further, the proof of Proposition \ref{Prop:LimitsPartialSums} yields
\[
p(d(\underline{s}))=\underline{s}.
\]
This shows that $\underline{s}$ has a unique $\tau$-limit which equals its $\Sigma$-limit.  Moreover, recall that for each sequence with a $\Sigma$-limit, all initial segments also have $\Sigma$-limits.  Applying the previous work to initial segments of $\underline{s}$ thus shows that all initial segments of $\underline{s}$ have unique $\tau$-limits.

Conversely, assume that all initial segments of $\underline{s}$ have unique $\tau$-limits, with the $\tau$-limit of $\underline{s}$ equal to $x\in X$.  By transfinite induction on order types, we may assume that all proper initial segments of $\underline{s}$ have $\Sigma$-limits.  In particular $d(\underline{s})$ exists. Again appealing to the proof of Proposition \ref{Prop:LimitsPartialSums}, we have
\[
p(d(\underline{s}))=\underline{s}.
\]
Since $p(d(\underline{s}))$ has a unique $\tau$-limit of $x$, then by Definition \ref{Definition:InducedSummationFromTopology} the family $d(\underline{s})$ is summable, with $\Sigma d(\underline{s})=x$.  Then $\underline{s}$ has a $\Sigma$-limit of $x$ by Definition \ref{Definition:Limits}.  This finishes the inductive step, as well as the verification of (5).

For the remainder of the proof assume that $\tau$ is $T_1$.

{\bf (6)}:  Let $\underline{a}\in X^I$ be an arbitrary summable family.  We show that for any $x\in X$ and any ordinal $\alpha$ larger than all elements in $I$, the extended family $\underline{a}\#_{\alpha}x$ is also summable with sum $\Sigma\underline{a}+x$.  Indeed,
\[
p(\underline{a}\#_{\alpha}x)=p(\underline{a})\#_{\alpha}(\Sigma\underline{a} + x),
\]
which has $\Sigma\underline{a}+x$ as a unique $\tau$-limit; uniqueness uses the $T_1$ property.

Starting with $\Sigma()=0$, and applying the previous paragraph repeatedly, to successively larger finite families, the bullet points follow.

{\bf (7)}: Next, let $\underline{a}\in X^I$ be summable, and let $\underline{b}\in X^J$ be an ordinal-indexed core-extension of $\underline{a}$ such that all the elements of $J\setminus I$ occur after those in $I$.  Working by transfinite induction on the order type of $J\setminus I$, we will show that
\begin{itemize}
\item[(A)] the partial sum sequence $p(\underline{b})$ is just the tail-extension of $p(\underline{a})$ where the new entries are all equal to $\Sigma \underline{a}$, and
\item[(B)] the sequence $p(\underline{b})$ has the same unique limit as $p(\underline{a})$.
\end{itemize}
If $J\setminus I=\emptyset$, then both claims are immediate, since $p(\underline{b})=p(\underline{a})$.  If the order type of $J\setminus I$ is a successor ordinal, then (A) follows from (6) and the inductive assumption.  If the order type of $J\setminus I$ is an infinite limit ordinal, then (A) comes from the recursive definition of the partial sum family and the inductive assumption.  In both situations, (B) comes from the $T_1$ assumption, since the $\tau$-limit of any family with a constant tail has that constant as the unique limit.

(More generally, one can show that a core-extension $\underline{b}\in X^J$ of $\underline{a}\in X^I\cap \dom(\Sigma)$ has the same sum as long as the set $S:=\{I_{<j}\, :\, j\in J\setminus I\text{ and $I_{<j}$ has no greatest element}\}$ is finite.)

{\bf (8)}: Given a sequence with a $\tau$-limit, any cofinal subsequence also has that same $\tau$-limit, even if it may have other $\tau$-limits as well.  Thus, subsequences of gapless sequences are gapless.  Assuming all gapless sequences have unique $\tau$-limits, then all initial segments of gapless families have unique $\tau$-limits.  Thus, gapless families and families with $\Sigma$-limits are the same, by part (5).  The claim about cofinal subsequences always having the same (unique) $\Sigma$-limit is now immediate.
\end{proof}

Assuming better topological properties on $\tau$ can lead to corresponding improvements for the induced summation system $\Sigma$.  The interested reader may wish to investigate how to guarantee that $\Sigma$ respects \emph{all} ordinal-indexed core-extensions.  We also raise:

\begin{question}
Is there a natural topological condition on $\tau$ that implies Axiom \textup{\ref{Axiom:OrdinalAssociativity}} holds for the induced summation system?
\end{question}

We next present concrete examples of induced summation systems, which demonstrate some important behaviors.

\begin{example}\label{Example:InducedFromTrivial}
Let $(X,+)$ be an arbitrary abelian group, and let $\tau=\{\emptyset, X\}$, which is the indiscrete (or trivial) topology on $X$.  Every point $x\in X$ is a $\tau$-limit of every sequence.  (In particular, every sequence is gapless.)  Letting $\Sigma$ be the induced summation system, there are two types of behavior that can occur.

{\bf Case 1}: $|X|=1$.  In this case, every sequence is just a constant sequence of zeros.  Every such sequence has a unique $\tau$-limit, namely the only element $0\in X$.  Thus, the domain of $\Sigma$ consists of all ordinal-indexed families (of zeros), and all sums are $0$.

There is only one topology on $X$.  Hence $\tau_{\Sigma}=\tau$, so $\tau_{\Sigma}$ is the discrete (and trivial) topology.

{\bf Case 2}: $|X|\geq 2$.  In this case, the only sequence with a unique $\tau$-limit is the empty sequence.  Thus, the only $\Sigma$-summable family is the empty family, with sum $0$.

Since the only $\Sigma$-limit is the trivial one, that $\Sigma$-limit is a topological limit (by convention) for each topology on $X$.  The $\Sigma$-topology $\tau_{\Sigma}$ is thus the discrete topology.  Let $\Sigma'$ be the summation system induced by $\tau_{\Sigma}$.  The system $\Sigma'$ is much larger than the (nearly empty) system $\Sigma$; using parts (6) and (7) of Theorem \ref{Thm:WhatPartialSummationGetsUs}, each ordinal-indexed family with only finitely many nonzero members is $\Sigma'$-summable, with the sum obtained by adding those nonzero members.  That these are the only $\Sigma'$-summable families is also easy to show.

The motivation for defining the $\Sigma$-topology as in Definition \ref{Def:SigmaTopology} is to find the ``best'' situation where $\Sigma$-limits are topological.  If we know, \emph{a priori}, that $\Sigma$ arises as an induced summation system, we can try to do even better at recapturing the original topology.  Define the \emph{full-$\Sigma$-topology}, denoted $\tau_{\Sigma}^{\ast}$, to be the topology generated by all those topologies such that $\Sigma$ is the induced summation system.

Returning to our current situation, where $\tau$ is the trivial topology, there are two subcases.

{\bf Case 2a}: $|X|=2$.  In this case, there are exactly four topologies on $X$.  Checking each case, we see that $\tau$ is the only topology inducing $\Sigma$.  Thus, $\tau=\tau_{\Sigma}^{\ast}\subsetneq \tau_{\Sigma}$.

{\bf Case 2b}: $|X|\geq 3$.  For each $x\in X$, the topology $\{\emptyset, \{x\},X\}$ has $\Sigma$ as its induced summation system.  Thus, any singleton is open in the topology these all generate.  Therefore, $\tau_{\Sigma}^{\ast}$ is the discrete topology, so $\tau\subsetneq \tau_{\Sigma}^{\ast}=\tau_{\Sigma}$.
\end{example}

\begin{example}\label{Example:Rchain}
Let $X=\R$, with its usual group structure.  Also let $\tau$ be the standard topology on $\R$, and let $\Sigma$ be the induced summation system.  The $\Sigma$-summable families of order type $\omega$ are exactly the sequences of terms of conditionally convergent series (in the given order), and their sums are given by the corresponding values of the series.  However, there are $\Sigma$-summable families of other order types, both longer and shorter than $\omega$.

By condition (5) of Theorem \ref{Thm:WhatPartialSummationGetsUs}, we know that $\Sigma$-limits are $\tau$-limits.  Thus, $\tau\subseteq \tau_{\Sigma}$.  We finish this example by proving the reverse inclusion.

Assume, $U\notin \tau$.  There then must exist some $x\in U$ such that no standard open interval around $x$ is contained in $U$.  Fix a sequence of points $\underline{s}=(s_i)_{i\in \N}$, with none of its members belonging to $U$, such that $|s_i-x|<(i+1)^{-2}$.  The $\tau$-limit of $\underline{s}$ is $x$.  This is also the $\Sigma$-limit of $\underline{s}$, by Theorem \ref{Thm:WhatPartialSummationGetsUs}(5), and so it is also the $\tau_{\Sigma}$-limit.  Hence $U\notin \tau_{\Sigma}$.   (Moreover, the terms $s_i$ approach $x$ quickly enough to guarantee that $d(\underline{s})$ is absolutely converging.)
\end{example}

In the previous examples, the process of passing back and forth, from a topology $\tau$ to its induced summation system $\Sigma$, and from a summation system $\Sigma$ to the $\Sigma$-topology $\tau_{\Sigma}$, ultimately stabilized.  This motivates defining a special map.

\begin{definition}
Let $(X,+)$ be an abelian group, and let $\mathscr{T}$ be the set of topologies on $X$.  The \emph{induced self-map} is the function $\varphi\colon \mathscr{T}\to \mathscr{T}$ given by the rule $\tau\mapsto \tau_{\Sigma}$, where $\Sigma$ is the induced summation system on $X$ (induced by $\tau$ and $+$).
\end{definition}

Key features of the induced self-map $\varphi$ hold quite generally, as we now prove.

\begin{thm}\label{Thm:BigTopTheorem}
Let $(X,+)$ be an abelian group.  If $\varphi$ is the induced self-map, then:
\begin{itemize}
\item[\textup{(1)}] $\varphi$ does not depend on the choice of abelian group structure on $X$,
\item[\textup{(2)}] if $\tau$ is a topology on $X$, and if $\Sigma$ and $\Sigma'$ are the induced summation systems induced by $\tau$ and $\varphi(\tau)$, respectively, then $\Sigma\subseteq \Sigma'$,
\item[\textup{(3)}] every topology in $\im(\varphi)$ is $T_1$,
\item[\textup{(4)}] $\varphi$ is extensive, meaning $\tau\subseteq \varphi(\tau)$ for each topology $\tau$ on $X$, and
\item[\textup{(5)}] $\varphi$ is idempotent, meaning $\varphi^2=\varphi$.
\end{itemize}
\end{thm}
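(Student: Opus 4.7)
The plan is to prove the five claims in the order (1), (4), (3), (2), (5), with everything flowing from Theorem \ref{Thm:WhatPartialSummationGetsUs}, especially its part (5), which characterizes $\Sigma$-summability in purely topological terms: a family has a $\Sigma$-limit if and only if all its initial segments have unique $\tau$-limits, and in that case the $\Sigma$-limit equals the $\tau$-limit. For (1), that characterization never mentions the group operation, so the $\Sigma$-limits, and hence $\tau_{\Sigma}=\varphi(\tau)$, depend only on $\tau$. For (4), given $U\in\tau$ and any sequence $\underline{s}$ with $\Sigma$-limit $y\in U$, part (5) identifies $y$ as the $\tau$-limit of $\underline{s}$, so $\underline{s}$ is eventually in $U$, giving $U\in\tau_{\Sigma}$ by the explicit description of $\tau_{\Sigma}$ following Definition \ref{Def:SigmaTopology}.

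The main obstacle is (3). The strategy is to first establish the auxiliary fact that whenever $\underline{s}=(s_i)_{i\in I}$ has a $\Sigma$-limit, every singleton $\{s_i\}$ is $\tau$-closed. Indeed, the initial segment $\underline{s}|_{I_{\leq i}}$ has a $\Sigma$-limit by initial summability (part (3)), and this $\Sigma$-limit equals $s_i$ by Lemma \ref{Lemma:SuccessorLimits} (since part (4) supplies postfix associativity), and part (5) then makes $s_i$ the unique $\tau$-limit of $\underline{s}|_{I_{\leq i}}$. However, the $\tau$-limits of any sequence with last term $s_i$ coincide with $\overline{\{s_i\}}$: the last term is always a topological limit, and conversely any $\tau$-limit $z$ must have $s_i$ in every $\tau$-open neighborhood of $z$ (take the ``eventually'' threshold equal to $i$). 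Uniqueness therefore forces $\overline{\{s_i\}}=\{s_i\}$. With this fact in hand, suppose for contradiction that $\underline{s}$ has $\Sigma$-limit $y\neq x_0$ while $x_0$ occurs cofinally in $\underline{s}$; then $\{x_0\}$ is $\tau$-closed, so $X\setminus\{x_0\}$ is a $\tau$-open neighborhood of $y$, and $\tau$-convergence of $\underline{s}$ to $y$ (again via part (5)) forces $\underline{s}$ eventually in $X\setminus\{x_0\}$, contradicting cofinality of $x_0$.

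Finally, (2) and (5) are comparatively routine. For (2), proceed by transfinite induction on the order type of the index set of $\underline{a}$: the inductive hypothesis makes the partial sum families built from $\Sigma$ and from $\Sigma'$ agree on $\underline{a}$, and Proposition \ref{Prop:LimitsPartialSums} identifies $\Sigma\underline{a}$ as the $\Sigma$-limit of this common family, hence as a $\tau_{\Sigma}$-limit by the defining property of $\tau_{\Sigma}$. Uniqueness of the $\tau_{\Sigma}$-limit is automatic: (4) gives $\tau\subseteq\tau_{\Sigma}$, so every $\tau_{\Sigma}$-limit is a $\tau$-limit, and $\Sigma$-summability guarantees the $\tau$-limit is already unique. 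For (5), combine (2), (4), and Lemma \ref{Lemma:TopologyReversal}: applying (4) with $\tau_{\Sigma}$ in place of $\tau$ gives $\tau_{\Sigma}\subseteq\tau_{\Sigma'}$, while $\Sigma\subseteq\Sigma'$ from (2) reverses the inclusion by Lemma \ref{Lemma:TopologyReversal}, yielding the equality $\varphi^2=\varphi$.
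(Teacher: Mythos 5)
Your proposal is correct, and for parts (1), (2), (4), and (5) it follows essentially the same route as the paper: everything is funneled through Theorem \ref{Thm:WhatPartialSummationGetsUs}(5), the idempotence comes from combining (2), (4) and Lemma \ref{Lemma:TopologyReversal}, and your uniqueness argument in (2) (every $\tau_{\Sigma}$-limit is a $\tau$-limit because $\tau\subseteq\tau_{\Sigma}$) is exactly the paper's. The only genuine divergence is in (3). Both arguments hinge on the same key observation --- a point whose singleton is not $\tau$-closed cannot occur as a term of any sequence with a $\Sigma$-limit, because the initial segment ending at that term would have a non-unique $\tau$-limit --- but the concluding mechanisms differ. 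The paper builds the auxiliary refinement $\tau_x$ (adjoining $U\setminus\{x\}$ for $U\in\tau$), checks that $\Sigma$-limits remain topological under $\tau_x$, and then invokes the ``finest topology'' characterization of $\varphi(\tau)$ to get $\tau_x\subseteq\varphi(\tau)$. You instead verify directly that $X\setminus\{x_0\}$ satisfies the concrete membership criterion for $\tau_{\Sigma}$ coming from Proposition \ref{Prop:FinestTopologyWithLimitsDetermined}: if $x_0$ occurred cofinally in a sequence with $\Sigma$-limit $y\neq x_0$, your auxiliary fact makes $X\setminus\{x_0\}$ a $\tau$-open neighborhood of $y$, contradicting $\tau$-convergence. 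Your version is marginally more self-contained (no auxiliary topology to construct and verify), at the cost of needing the explicit computation that the $\tau$-limit set of a sequence with last term $s_i$ is $\overline{\{s_i\}}$; the paper's version isolates the reusable fact that $\tau_x\subseteq\varphi(\tau)$, which is slightly more information than the $T_1$ conclusion alone. Both are complete proofs.
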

\begin{proof}
Throughout the proof, let $\tau$ be an arbitrary topology on $X$, and let $\Sigma$ be the induced summation system on $X$ (induced by $\tau$ and $+$).

{\bf (1) and (4)}: By definition, $\tau_{\Sigma}$ is the finest topology where $\Sigma$-limits are topological limits.  However, by Theorem \ref{Thm:WhatPartialSummationGetsUs}(5), the $\Sigma$-limits are exactly the same thing as $\tau$-limits on families whose initial segments have unique $\tau$-limits.  (One can restrict to nonempty limits, if one wants to forget that $X$ is a pointed space.)  Thus, $\tau\subseteq \tau_{\Sigma}$.  Further, this lets us replace all mention of $\Sigma$-limits by information determined solely by the topology $\tau$, without reference to the additive structure.

{\bf (2) and (5)}: Let $\Sigma'$ be the summation system induced by $\tau_{\Sigma}$.  As mentioned above, by Theorem \ref{Thm:WhatPartialSummationGetsUs}(5) the $\Sigma$-limit families are exactly the families whose initial segments have unique $\tau$-limits, with the two types of limits agreeing.  Definition \ref{Def:SigmaTopology} tells us that those are also $\tau_{\Sigma}$-limits, and they remain unique since $\tau_{\Sigma}$ is finer than $\tau$ by part (4).  Finally, by an inductive argument using Definition \ref{Definition:InducedSummationFromTopology} applied to both of the two induced summation systems, we get that any $\Sigma$-summable family is $\Sigma'$-summable, with the sums agreeing.

The inclusion $\varphi(\tau)\subseteq \varphi^2(\tau)$ holds by part (4).  The reverse inclusion holds using $\Sigma\subseteq \Sigma'$ in conjunction with Lemma \ref{Lemma:TopologyReversal}.

{\bf (3)}: Let $x\in X$.  Let $\tau_{x}$ consist of all sets of the form $U$ or $U\setminus \{x\}$, for each $U\in \tau$.  It is straightforward to show that $\tau_x$ is a topology refining $\tau$, under which $x$ is a closed point.

If $x$ is closed in $\tau$, then $\tau_{x}=\tau\subseteq \varphi(\tau)$, and so $x$ is closed in $\varphi(\tau)$.  Now suppose that $x$ is not closed in $\tau$, so $\tau\subsetneq \tau_x$.  It suffices to show that $x$ is closed in $\varphi(\tau)$ in this case too.

Let $\underline{s}$ be any sequence with a $\Sigma$-limit.  Thus, all initial segments of $\underline{s}$ have unique $\tau$-limits.  Consequently, since $x$ is not closed under $\tau$, then $x$ cannot be a member of $\underline{s}$; for, any initial segment ending at $x$ would have at least two limits.  With $x$ not appearing in $\underline{s}$, the $\tau$-limit of $\underline{s}$ is also a $\tau_x$-limit; the convergence property is unaffected when considering any of the new open sets $U\setminus \{x\}\in \tau_x\setminus \tau$.  Since $\varphi(\tau)$ is the finest topology where $\Sigma$-limits are topological, we have $\tau_x\subseteq \varphi(\tau)$.  Hence $x$ is closed in $\varphi(\tau)$.
\end{proof}

\begin{cor}\label{Cor:InducedSelfMapSpecial}
The induced self-map $\varphi$ is a closure operator when restricted to topologies whose limits are unique on gapless families, hence on Hausdorff topologies.  Subject to this restriction, the inclusion in \textup{Theorem \ref{Thm:BigTopTheorem}(2)} is an equality.
\end{cor}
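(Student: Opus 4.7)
The corollary asserts, under the hypothesis that limits are unique on gapless families, both that $\varphi$ is a closure operator and that the inclusion $\Sigma \subseteq \Sigma'$ in Theorem \ref{Thm:BigTopTheorem}(2) tightens to equality. Extensivity and idempotence of $\varphi$ are already supplied by parts (4) and (5) of that theorem, so the only remaining property needed for the closure operator claim is monotonicity. The plan is to establish monotonicity first, and then deduce the equality from it.

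For monotonicity, I would take $\tau_1 \subseteq \tau_2$, both having unique limits on gapless families, and prove the reverse inclusion $\Sigma_2 \subseteq \Sigma_1$ of their induced summation systems. An application of Lemma \ref{Lemma:TopologyReversal} then yields $\varphi(\tau_1) = \tau_{\Sigma_1} \subseteq \tau_{\Sigma_2} = \varphi(\tau_2)$. For the equality in part (2), I would specialize to $\tau_1 = \tau$ and $\tau_2 = \varphi(\tau)$, which is legitimate once one verifies that $\varphi(\tau)$ also has unique limits on gapless families. That verification is short: any $\varphi(\tau)$-gapless family is also $\tau$-gapless since $\tau \subseteq \varphi(\tau)$, its $\tau$-limit is unique by hypothesis, and any $\varphi(\tau)$-limit is in particular a $\tau$-limit, forcing it to coincide with the unique $\tau$-limit. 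In this specialized setting, the inclusion $\Sigma_2 \subseteq \Sigma_1$ reads $\Sigma' \subseteq \Sigma$, which combined with part (2) gives equality.

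The proof of $\Sigma_2 \subseteq \Sigma_1$ proceeds by transfinite induction on order type. Assume every $\Sigma_2$-summable family of order type less than $\alpha$ is $\Sigma_1$-summable with the same sum, and let $\underline{a}$ be $\Sigma_2$-summable of order type $\alpha$. Its proper initial segments are $\Sigma_2$-summable by Theorem \ref{Thm:WhatPartialSummationGetsUs}(3), hence $\Sigma_1$-summable with matching sums by the inductive hypothesis, so the partial sum sequence $p(\underline{a})$ is unambiguously defined using either system. Its nonempty initial segments are themselves the partial sum sequences of proper initial segments of $\underline{a}$, all of which are $\Sigma_1$-summable, so those segments possess $\tau_1$-limits by Definition \ref{Definition:InducedSummationFromTopology}. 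Thus $p(\underline{a})$ is $\tau_1$-gapless. The unique $\tau_2$-limit of $p(\underline{a})$ is also a $\tau_1$-limit because $\tau_1$ is coarser, and the uniqueness hypothesis on $\tau_1$-gapless families promotes this to the unique $\tau_1$-limit. Hence $\underline{a}$ is $\Sigma_1$-summable with the same sum, closing the induction.

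The main obstacle is conceptual rather than computational: passing from a finer to a coarser topology potentially introduces extra limits, threatening the uniqueness built into the definition of the induced summation system. The entire force of the gapless-uniqueness restriction is precisely to block this failure at the step where the induction depends on uniqueness of a $\tau_1$-limit. Since Hausdorff topologies have unique limits on all nets, they automatically lie in this class, giving the parenthetical conclusion.
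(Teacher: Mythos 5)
Your proof is correct and follows essentially the same route as the paper: verify that $\varphi(\tau)$ inherits uniqueness of limits on gapless families, prove monotonicity by showing the induced summation systems satisfy $\Sigma_2\subseteq\Sigma_1$ under the gapless-uniqueness hypothesis on the coarser topology, convert via Lemma \ref{Lemma:TopologyReversal}, and specialize $\tau_2=\varphi(\tau)$ to get the equality. The only cosmetic difference is that you carry out an explicit transfinite induction where the paper invokes Theorem \ref{Thm:WhatPartialSummationGetsUs}(5) together with the $d$/$p$ correspondence, which amounts to the same argument.
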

\begin{proof}
Let $\tau\subseteq \tau'$ be topologies, and assume any sequence with multiple $\tau$-limits must have a gap.  Then $\tau'$ has the same property; any sequence with multiple $\tau'$-limits has multiple $\tau$-limits, hence has a $\tau$-gap, thus also a $\tau'$-gap at the same spot.  This shows that $\varphi$ remains a self-map when restricted to topologies whose limits are unique on gapless families.  (An easier argument works for Hausdorff topologies.)

Let $\Sigma$ and $\Sigma'$ be the induced summation systems for $\tau$ and $\tau'$, respectively.  Gapless sequences under $\tau'$ are gapless under $\tau$, so by Theorem \ref{Thm:WhatPartialSummationGetsUs}(5) any family with a $\Sigma'$-limit has a $\Sigma$-limit, and those unique limits are the same (since $\tau\subseteq \tau'$).  Thus, $\Sigma'\subseteq \Sigma$.  Hence, by Lemma \ref{Lemma:TopologyReversal}, we have $\varphi(\tau)\subseteq \varphi(\tau')$.  We already proved that $\varphi$ is extensive and idempotent, and having now shown monotonicity, $\varphi$ is a closure operator.

Taking $\tau'=\varphi(\tau)$, we have the reverse inclusion $\Sigma\subseteq \Sigma'$ by Theorem \ref{Thm:BigTopTheorem}(2).  Hence, equality holds in this case.
\end{proof}

According to Theorem \ref{Thm:BigTopTheorem}(1), the induced self-map can be defined without reference to any additive structure on $X$.  This lets us reorient ourselves by positioning $\varphi(\tau)$ relative to other common constructions, without the need for any algebra.

Towards that end, we now recall three standard topologies, each defined in terms of different kinds of limits.   Let $(X,\tau)$ be an arbitrary topological space.  First, there is a refinement $\tau_{\rm seq}$ of $\tau$, where a set $S\subseteq X$ is $\tau_{\rm seq}$-closed exactly when the $\tau$-limit of any $\omega$-sequence from $S$ still lies in $S$ (when such a limit exists).  It is easily constructed using Proposition \ref{Prop:FinestTopologyWithLimitsDetermined}.  In the literature, the topology $\tau_{\rm seq}$ is called the \emph{sequential coreflection} of $\tau$.

Instead of defining closure under $\omega$-sequence limits, we can define closure under arbitrary ordinal-indexed sequence limits, giving a topology $\tau_{\rm chain}$.  This might be called the \emph{chain-net coreflection}, and $(X,\tau_{\rm chain})$ is a \emph{chain-net} space.  Spaces that are chain-net also go by the names \emph{pseudoradial} or \emph{folgenbestimmte R\"{a}ume}; see \cite[Section D-4]{Encyc}.  Even more generally, defining closure in terms of arbitrary nets (not necessarily well-ordered) yields a topology that we denote as $\tau_{\rm net}$.  The inclusions
\[
\tau_{\rm net}\subseteq \tau_{\rm chain}\subseteq \tau_{\rm seq},
\]
are clear.  Examples are known where either subset may be proper.  The full equality $\tau=\tau_{\rm net}$ is another well-known fact.

Defining closure in terms of sequences whose initial segments have unique $\tau$-limits similarly yields a topology, which is exactly $\varphi(\tau)$. This topology is $T_1$, by Theorem \ref{Thm:BigTopTheorem}(3), and clearly $\tau_{\rm chain}\subseteq \varphi(\tau)$.

When $\tau$ is $T_1$, more is true.  Suppose that $\underline{s}=(s_0,s_1,\ldots)$ is any $\omega$-sequence with some (possibly non-unique) $\tau$-limit $x$.  Then, the new sequence
\[
\underline{s}'=(s_0,x,s_1,x,\ldots)
\]
now has $x$ as its unique $\tau$-limit.  Thus, all nonempty initial segments of $\underline{s}'$ have unique $\tau$-limits.  This means that $x$ must be the $\varphi(\tau)$-limit of $\underline{s}'$.  Since $\underline{s}$ is a cofinal subsequence of $\underline{s}'$, then $x$ is also a $\varphi(\tau)$-limit of $\underline{s}$.  Thus,
\[
\tau=\tau_{\rm net}\subseteq \tau_{\rm chain}\subseteq \varphi(\tau)\subseteq \tau_{\rm seq},
\]
when $\tau$ is $T_1$.  For the standard topology on $\R$, equality holds throughout; a subset of $\R$ is sequentially closed if and only if it is closed.  Thus, from the usual summation system for series, the original topology on $\R$ is fully reconstructible, as shown in Example \ref{Example:Rchain}.

Generally, we can only expect to recover a proper refinement of the original topology $\tau$ sitting above $\tau_{\rm chain}$ (and below $\tau_{\rm seq}$ when $\tau$ is $T_1$).  Some topological properties of $\tau$ pass to $\varphi(\tau)$.  Moreover, they are often reflected as summational properties for the induced summation system, as we have observed.  This raises the following two questions.

\begin{question}
Is there a natural characterization of the topologies belonging to $\im(\varphi)$?
\end{question}

\begin{question}
Is there a natural list of axioms that are necessary and sufficient for a summation system to occur as the induced summation system (for a topology in $\im(\varphi))$?
\end{question}

\section{Unconditional summation}\label{Section:UncondSums}

The method of partial summation is not the only way to use extra information connected with $X$ to form a summation system.  For example, both absolutely converging series summation and endomorphism ring summation arise from a method called \emph{unconditional convergence}.  It is defined, quite generally, as follows.

\begin{definition}\label{Def:UncondSums}
Let $(X,+)$ be an abelian group, and let $\mathscr{A}\subseteq \power(X)$.  A family $\underline{a}\in X^I$ is \emph{unconditionally summable} to $x\in X$ (relative to $\mathscr{A}$) if for each $S\in \mathscr{A}$ there exists a finite set $F_{S,\underline{a}}\subseteq I$, such that for any finite set $F'$ satisfying $F_{S,\underline{a}}\subseteq F'\subseteq I$, then $\sum_{i\in F'}a_i - x \in S$.  (The sum over $F'$ is just iterating the abelian group addition on $X$.)

The summation system
\[
\Sigma_{\mathscr{A}}:=\left\{(\underline{a},x)\in \textstyle{\bigcup_I} X^I \times X\, :\, \text{$x$ is the unique unconditional sum of $\underline{a}$}\right\}
\]
is the \emph{unconditional summation system} (induced by $\mathscr{A}$ and $+$).
\end{definition}

Unconditional summability is sometimes defined only when $I$ is a countable index set, but we will not make that restriction.  Definition \ref{Def:UncondSums} is usually applied when $\mathscr{A}$ is the set of open neighborhoods of $0$, under some topology.  In that case, given an arbitrary index set $I$, the set of finite subsets of $I$ forms a net, and unconditional summation can be viewed in terms of topological limits indexed by such nets.  Absolute convergence is equivalent to unconditional convergence for series, under the usual topology on $\R$.  For general Banach spaces, it is known that absolute convergence implies unconditional convergence, but the converse holds if and only if the space is finite dimensional, by \cite[Theorem 1]{DR}.

Definition \ref{Def:UncondSums} is usually only applied when $S\in\mathscr{A} \Longrightarrow -S\in \mathscr{A}$, else the condition $\sum_{i\in F'}a_i-x\in S$ lacks some symmetry.  We leave it to the reader to show that when this implication holds, the unconditional summation system $\Sigma_{\mathscr{A}}$ satisfies negation functoriality.

We also leave it to the reader to show the following standard facts.  The unconditional summation system $\Sigma_{\mathscr{A}}$ always has reindexing invariance.  It also always has additive extension closure, and moreover the implication in Axiom \ref{Axiom:AdditiveExtClosure} improves to a biconditional.  Consequently, prefix (and postfix) associativity hold.  (Another consequence is that if $\Sigma_{\mathscr{A}}$ is nonempty, it is surjective.)  Finally, Axiom \ref{Axiom:EmptyAddsNada} always holds for $\Sigma_{\mathscr{A}}$ when interpreting ``core-extension'' to mean extending by $0$ entries (rather than by $\Sigma()$, which might not exist).

The following two results provide some interesting criteria regarding uniqueness of unconditional sums.

\begin{prop}\label{Prop:UniqueUncondSums}
Let $(X,+)$ be an abelian group, and let $\mathscr{A}\subseteq \power(X)$.  If $\underline{a}\in X^I$ with $|I|<\infty$, then $x\in X$ is an unconditional sum of $\underline{a}$ if and only if $\sum_{i\in I}a_i -x\in \bigcap_{S\in \mathscr{A}}S$.  Therefore, a finite unconditional sum exists and is unique if and only if $|\bigcap_{S\in \mathscr{A}}S|=1$, in which case $\Sigma_{\mathscr{A}}$ satisfies finite totality.  Consequently, the following conditions are equivalent:
\begin{itemize}
\item empty set existence holds with $\Sigma_{\mathscr{A}}()=0$,
\item singletons sum simply under $\Sigma_{\mathscr{A}}$, and
\item $\bigcap_{S\in \mathscr{A}}S=\{0\}$.
\end{itemize}
\end{prop}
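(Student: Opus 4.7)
The proof is essentially an unraveling of the definition of unconditional summability in the degenerate case where $I$ is already finite, together with a bookkeeping argument about cosets of $\bigcap_{S\in\mathscr{A}}S$.

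First I would establish the characterization of finite unconditional sums. The key observation is that when $|I|<\infty$, one may simply take $F_{S,\underline{a}}=I$ in Definition \ref{Def:UncondSums}; then the only finite $F'$ with $F_{S,\underline{a}}\subseteq F'\subseteq I$ is $F'=I$ itself, so the defining condition collapses to $\sum_{i\in I}a_i-x\in S$. Running this across all $S\in\mathscr{A}$ gives one direction of the biconditional. Conversely, if $\sum_{i\in I}a_i-x\in\bigcap_{S\in\mathscr{A}}S$, then the choice $F_{S,\underline{a}}=I$ witnesses unconditional summability for every $S$. This takes care of the first sentence.

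Next I would deduce the finite-totality claim. For any fixed finite family $\underline{a}\in X^I$, the set of elements $x\in X$ that serve as an unconditional sum is precisely the coset $\sum_{i\in I}a_i-\bigcap_{S\in\mathscr{A}}S$, whose cardinality equals $|\bigcap_{S\in\mathscr{A}}S|$. Hence a \emph{unique} unconditional sum exists (for every finite $\underline{a}$, with $I$ ranging over finite sets) if and only if $|\bigcap_{S\in\mathscr{A}}S|=1$; in that situation $\Sigma_{\mathscr{A}}$ is defined on all of $\bigcup_{|I|<\aleph_0}X^I$, giving finite totality.

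Finally I would establish the three-way equivalence by a small cycle. If $\bigcap_{S\in\mathscr{A}}S=\{0\}$, then the previous paragraph applied to $I=\emptyset$ yields $\Sigma_{\mathscr{A}}()=0-0=0$, and applied to any singleton $(a)\in X^{\{0\}}$ yields $\Sigma_{\mathscr{A}}(a)=a$, so both of the other conditions hold. Conversely, if $\Sigma_{\mathscr{A}}()=0$ exists uniquely, then by the first part $0=\sum_{i\in\emptyset}a_i-0\in\bigcap_{S\in\mathscr{A}}S$, and uniqueness together with the coset description forces $|\bigcap_{S\in\mathscr{A}}S|=1$, hence $\bigcap_{S\in\mathscr{A}}S=\{0\}$. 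The same reasoning applied to a singleton $(a)$ with unique sum $a$ gives $a-a=0\in\bigcap_{S\in\mathscr{A}}S$ and uniqueness, again forcing $\bigcap_{S\in\mathscr{A}}S=\{0\}$.

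There is no real obstacle: the entire argument is a direct translation between the defining condition and an intersection of shifted sets, together with the trivial observation that the only finite $F'$ sandwiched between $I$ and $I$ is $I$. The only subtle point to flag is that the unconditional summation system by definition requires a \emph{unique} sum, so in the cycle of implications one must invoke uniqueness (not merely existence) of $\Sigma_{\mathscr{A}}()$ and of $\Sigma_{\mathscr{A}}(a)$ to conclude $|\bigcap_{S\in\mathscr{A}}S|=1$ rather than just $0\in\bigcap_{S\in\mathscr{A}}S$.
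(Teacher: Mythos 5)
Your proposal is correct and follows essentially the same route as the paper: take $F_{S,\underline{a}}=I$ to collapse the definition to $\sum_{i\in I}a_i-x\in S$ for all $S$, observe that the set of unconditional sums is the translate $\sum_{i\in I}a_i-\bigcap_{S\in\mathscr{A}}S$, and then specialize to $I=\emptyset$ and $I$ a singleton, invoking uniqueness (which the definition of $\Sigma_{\mathscr{A}}$ requires) exactly as you flag. The only cosmetic quibble is that $\bigcap_{S\in\mathscr{A}}S$ need not be a subgroup, so ``coset'' is loose language, but the cardinality argument is unaffected.
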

\begin{proof}
Let $\underline{a}\in X^I$ with $I$ finite.  In Definition \ref{Def:UncondSums}, there is no harm in replacing $F_{S,\underline{a}}$ by any finite superset of $F_{S,\underline{a}}$ contained in $I$.  Since $I$ is finite, we may as well take $F_{S,\underline{a}}=I$.  Therefore, $x$ is an unconditional sum of $\underline{a}$ if and only if $\sum_{i\in I}a_i-x\in S$ for each $S\in \mathscr{A}$.  Equivalently, $ \sum_{i\in I}a_i -x\in \bigcap_{S\in \mathscr{A}}S$.

An immediate consequence is that some (equivalently, each) finite unconditional sum exists and is unique if and only if $\bigcap_{S\in \mathscr{A}}S$ is a singleton.  By definition, an unconditional sum contributes an ordered pair to $\Sigma_{\mathscr{A}}$ exactly when the sum is unique.

The equivalence of the final trio of conditions now follows by noting that iterated addition $\sum_{i\in I}a_i$ is $0$ when $I=\emptyset$, and when $I=\{i_0\}$ the iterated addition is $a_{i_0}$.
\end{proof}

\begin{prop}\label{Prop:SecondUniqueUncondSums}
Let $(X,+)$ be an abelian group, and let $\mathscr{A}\subseteq \power(X)$.  Assuming
\begin{itemize}
\item $\bigcap_{S\in \mathscr{A}}S=\{0\}$ and
\item for each $S\in \mathscr{A}$ there exist $T,U\in \mathscr{A}$ with $T-U\subseteq S$,
\end{itemize}
then unconditional sums are unique when they exist.  Moreover, $\Sigma_{\mathscr{A}}$ satisfies addition and negation functoriality.
\end{prop}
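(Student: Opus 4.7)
\medskip

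\noindent\textbf{Proof plan.} My plan is to first record two preliminary observations that streamline everything. The hypothesis $\bigcap_{S\in\mathscr{A}}S=\{0\}$ forces $0\in S$ for every $S\in\mathscr{A}$, so if $T-U\subseteq S$ then taking $t=0\in T$ gives $-U\subseteq S$, and taking $u=0\in U$ gives $T\subseteq S$. Iterating the ``subtractive refinement'' hypothesis on $U$ itself, I can find $T',U'\in\mathscr{A}$ with $T'-U'\subseteq U$, whence $U'\subseteq -U$, and consequently $T+U'\subseteq T+(-U)=T-U\subseteq S$. Thus for every $S\in\mathscr{A}$ there exist $A,B\in\mathscr{A}$ with $A+B\subseteq S$: an \emph{additive} refinement property derived from the subtractive one.

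Next, I would prove uniqueness. Suppose $\underline{a}\in X^I$ has both $x$ and $y$ as unconditional sums. Given $S\in\mathscr{A}$, pick $T,U\in\mathscr{A}$ with $T-U\subseteq S$, and choose a finite $F'\subseteq I$ containing both $F_{T,\underline{a}}$ and $F_{U,\underline{a}}$ (enlarging $F'$ to be finite if $I$ is infinite). Then $\sum_{i\in F'}a_i-y\in T$ and $\sum_{i\in F'}a_i-x\in U$, so subtracting gives $x-y\in T-U\subseteq S$. Varying $S$, we conclude $x-y\in\bigcap_{S\in\mathscr{A}}S=\{0\}$.

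For addition functoriality, given $\underline{a}\in X^I$ and $\underline{b}\in X^J$ with unconditional sums $x$ and $y$, and given $S\in\mathscr{A}$, use the additive refinement from the first paragraph to pick $A,B\in\mathscr{A}$ with $A+B\subseteq S$. Choose finite sets $F_A\subseteq I$ and $F_B\subseteq J$ witnessing the unconditional summability of $\underline{a}$ (against $A$) and of $\underline{b}$ (against $B$), and let $F:=F_A\cup F_B\subseteq I\cup J$. For any finite $F\subseteq F'\subseteq I\cup J$, split $F'=(F'\cap I)\sqcup (F'\cap J)$ (using that $\underline{a}+\underline{b}$ is formed by zero extensions), so that
\[
\sum_{i\in F'}(a_i+b_i)-(x+y) = \Bigl(\sum_{i\in F'\cap I}a_i-x\Bigr)+\Bigl(\sum_{j\in F'\cap J}b_j-y\Bigr)\in A+B\subseteq S.
\]
Hence $x+y$ is an unconditional sum of $\underline{a}+\underline{b}$, and by the uniqueness already proved it is \emph{the} sum.

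Finally, for negation functoriality, given $\underline{a}$ unconditionally summable to $x$ and given $S\in\mathscr{A}$, pick $T,U\in\mathscr{A}$ with $T-U\subseteq S$ so that $-U\subseteq S$ by the first paragraph. Let $F:=F_{U,\underline{a}}$; for any finite $F\subseteq F'\subseteq I$,
\[
\sum_{i\in F'}(-a_i)-(-x)=-\Bigl(\sum_{i\in F'}a_i-x\Bigr)\in -U\subseteq S,
\]
so $-x$ is an unconditional sum of $-\underline{a}$, and again uniqueness pins it down. The only real obstacle in this plan is the opening algebraic juggling that upgrades the subtractive refinement hypothesis into an additive one; once that is in hand, the remaining three verifications are essentially the standard $\varepsilon/2$-arguments transplanted into the axiomatic language of $\mathscr{A}$.
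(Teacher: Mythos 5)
Your proof is correct. The uniqueness argument is the same telescoping computation as the paper's (the paper writes it as $y-x=(\sum_{i\in F'}a_i-x)-(\sum_{i\in F'}a_i-y)\in T-U\subseteq S$, which is your identity up to sign). For functoriality, however, you take a genuinely different route. The paper proves a single statement---that unconditional sums respect \emph{subtraction}---by one $T-U$ argument with $F:=F_{T,\underline{a}}\cup F_{U,\underline{b}}$ after reindexing so $I=J$; it then gets negation functoriality by taking $\underline{a}$ to be the empty family (whose unconditional sum is $0$) and addition functoriality by replacing $\underline{b}$ with $-\underline{b}$. You instead first upgrade the subtractive refinement hypothesis to an additive one ($A+B\subseteq S$) and to the containments $T\subseteq S$ and $-U\subseteq S$, using the fact that $0$ lies in every member of $\mathscr{A}$, and then run two direct $\varepsilon/2$-style verifications. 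Both are valid; your preliminary lemma is a nice clarification of what the two hypotheses jointly give, while the paper's subtraction-first route is slightly leaner and, notably, its subtraction step deliberately avoids the hypothesis $\bigcap_{S\in\mathscr{A}}S=\{0\}$ (the paper flags this), which lets that step be reused verbatim in the later results that assume only the second bullet. Your derivation of the additive refinement does use the first bullet (to place $0$ in $T'$), so it would not transfer to those later settings, but within the present proposition this costs nothing. One cosmetic quibble: the decomposition $F'=(F'\cap I)\sqcup(F'\cap J)$ need not be disjoint if $I\cap J\neq\emptyset$, though your displayed identity is correct anyway because the zero extensions make the stray terms vanish.
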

\begin{proof}
Assume that $\underline{a}\in X^I$ has unconditional sums $x,y\in X$.  Let $S\in \mathscr{A}$, and fix $T,U\in \mathscr{A}$ with $T-U\subseteq S$.  By Definition \ref{Def:UncondSums}, there exists a finite set $F_{T,\underline{a}}\subseteq I$ such that $\sum_{i\in F'}a_i-x\in T$ for each finite set $F'$ satisfying $F_{T,\underline{a}}\subseteq F'\subseteq I$.  Similarly, there exists a finite set $F_{U,\underline{a}}\subseteq I$ such that $\sum_{i\in F'}a_i-y\in U$ for each finite set $F'$ satisfying $F_{U,\underline{a}}\subseteq F'\subseteq I$.  In particular, taking $F'=F_{T,\underline{a}}\cup F_{U,\underline{a}}$, we find
\[
y-x=\left(\sum_{i\in F'}a_i-x\right)-\left(\sum_{i\in F'}a_i-y\right)\in T-U\subseteq S.
\]
Since $S\in \mathscr{A}$ is arbitrary and $\bigcap_{S\in \mathscr{A}}S=\{0\}$, we have $y-x=0$.  In other words, $x=y$.

We next show that unconditional sums respect subtraction.  (For this, we will not use the first assumption.)  Let $\underline{a}\in X^I$ and $\underline{b}\in X^J$ be families with unconditional sums $x$ and $y$, respectively.  As unconditional sums are unaffected by adjunction or removal of $0$ entries, we may assume $I=J$.  Let $S\in \mathscr{A}$, and fix $T,U\in \mathscr{A}$ with $T-U\in S$.  Fix finite sets $F_{T,\underline{a}}$ and $F_{U,\underline{b}}$ satisfying the conditions in Definition \ref{Def:UncondSums}.  Taking $F:=F_{T,\underline{a}}\cup F_{U,\underline{b}}$, then for each finite set $F'$ satisfying $F\subseteq F'\subseteq I$ we have
\[
\sum_{i\in F'}(a_i-b_i)-(x-y) = \left(\sum_{i\in F'}a_i-x\right)-\left(\sum_{i\in F'}b_i-y\right)\in T-U\subseteq S,
\]
thus verifying that $x-y$ is an unconditional sum for $\underline{a}-\underline{b}$.

Taking $\underline{a}$ to be the empty family, which by Proposition \ref{Prop:UniqueUncondSums} has an unconditional sum of $0$, we see that $-\underline{b}$ has unconditional sum $-y$, and so unconditional sums respect negations.  After replacing $\underline{b}$ by $-\underline{b}$, since sums respect subtraction, they also respect addition.
\end{proof}

When $\mathscr{A}$ is a filter, the two hypotheses of Proposition \ref{Prop:SecondUniqueUncondSums} hold exactly when $\mathscr{A}$ is the set of (not necessarily open) neighborhoods of $0$ for a (unique) topology that makes $(X,+)$ a Hausdorff topological group (see Propositions 1 and 2 of Chapter 3, Section 1.2 of \cite{Bourbaki}).  Unsurprisingly, those two hypotheses are sufficient but not necessary for uniqueness of unconditional sums, as the following example demonstrates.

\begin{example}\label{Example:DeletedNeighborhoods}
Let $X=\R$, with its usual group structure.  Take $\mathscr{A}$ to be the set of open neighborhoods of $0$ in the usual topology on $\R$, but remove $0$ from each neighborhood.  These are commonly called \emph{deleted} (or \emph{punctured}) neighborhoods.

A family $\underline{a}\in X^I$ has an unconditional sum $x\in \R$, relative to $\mathscr{A}$, exactly when $\sum_{i\in I}a_i$ is an absolutely convergent series (in the usual sense, after reindexing by $\N$), with the sum of that series being $x$, and there is a finite subset $F\subseteq I$ that \emph{cannot} be finitely extended to $F'\subseteq I$ with $\sum_{i\in F'}a_i=x$.  (Equivalently, $\underline{a}$ consists of terms of an absolutely converging series, with sum $x$, and under any reindexing of $I$ with $\N$, initial subsums eventually fail to equal $x$.)  Unconditional sums relative to $\mathscr{A}$ are unique when they exist, but neither of the hypotheses in Proposition \ref{Prop:SecondUniqueUncondSums} hold.
\end{example}

The unconditional summation system for the previous example \emph{fails} Axiom \ref{Axiom:SubsSummable}.  This demonstrates another fundamental difference between the methods of partial summation and unconditional summation; the first type of summation requires recursive constructions, and the second is of a more global nature.  Further, this failure undercuts our intuition from (normal) absolutely converging series, where Axiom \ref{Axiom:SubsSummable} does hold.  It turns out that it holds in that case for a special reason.  The following two results make it clear when to expect subfamilies to remain unconditionally summable.

\begin{prop}
Let $(X,+)$ be an abelian group, and let $\mathscr{A}\subseteq \power(X)$.  Assume
\begin{itemize}
\item for each $S\in \mathscr{A}$ there exist $T,U\in \mathscr{A}$ with $T-U\subseteq S$.
\end{itemize}
Let $\underline{a}\in X^I$ be an unconditionally summable family, with sum $x\in X$.  Let $P$ be a partition of $I$, and for each $J\in P$ assume that $(a_i)_{i\in J}$ is summable, with sum $s_J\in X$.  Then $x$ is an  unconditional sum for $\underline{s}:=(s_J)_{J\in P}$.

In particular, if the two assumptions of \textup{Proposition \ref{Prop:SecondUniqueUncondSums}} hold, and if subfamilies of summable families remain summable, then $\Sigma_{\mathscr{A}}$ satisfies \textup{Axiom \ref{Axiom:InsertAssociativity}}.
\end{prop}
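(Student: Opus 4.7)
The plan is to unfold the definition of unconditional summability directly: given $S \in \mathscr{A}$, produce a finite $F \subseteq P$ such that $\sum_{J \in F^*} s_J - x \in S$ for every finite $F^*$ with $F \subseteq F^* \subseteq P$. First I would apply the hypothesis to $S$ to obtain $T, U \in \mathscr{A}$ with $T - U \subseteq S$, and then use the unconditional summability of $\underline{a}$ to $x$ to fix a finite $F_T \subseteq I$ such that $\sum_{i \in E} a_i - x \in T$ whenever $F_T \subseteq E \subseteq I$ is finite. The natural witnessing set is $F := \{J \in P : J \cap F_T \neq \emptyset\}$, which is finite because $F_T$ is and the blocks of $P$ are disjoint.

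Given any finite $F^* \subseteq P$ with $F \subseteq F^*$, the key move is to approximate each $s_J$ ($J \in F^*$) by a finite partial sum $\sigma_J := \sum_{i \in H_J} a_i$, for $H_J \subseteq J$ chosen to satisfy two conditions: (i) $H_J \supseteq J \cap F_T$, so that $H := \bigsqcup_{J \in F^*} H_J \supseteq F_T$ and hence $\sum_{i \in H} a_i - x \in T$; and (ii) $\sigma_J - s_J \in V_J$ for some prescribed $V_J \in \mathscr{A}$, which is achievable by taking $H_J \supseteq F_{V_J, J}$ from the unconditional summability of $(a_i)_{i \in J}$ to $s_J$. Since the $H_J$ are disjoint, $\sum_{i \in H} a_i = \sum_{J \in F^*} \sigma_J$, and the identity
\[
\sum_{J \in F^*} s_J - x = \Bigl(\sum_{i \in H} a_i - x\Bigr) - \sum_{J \in F^*}(\sigma_J - s_J)
\]
then puts the left-hand side in $T - U \subseteq S$ provided the aggregated error $\sum_{J \in F^*}(\sigma_J - s_J)$ lies in $U$.

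The main obstacle is exactly this final clause: we must force a \emph{sum} of $|F^*|$ pieces to lie in $U$ using a hypothesis cast in terms of a single \emph{difference}. I would handle it by iterating the hypothesis, starting from $U_0 := U$ and recursively extracting pairs $T_k, U_k \in \mathscr{A}$ with $T_k - U_k \subseteq U_{k-1}$; the $V_J$'s can then be read off this chain so that the $|F^*|$ contributions $\sigma_J - s_J$ telescope into $U$. This bookkeeping is cleanest when $\mathscr{A}$ is closed under finite intersection --- in that case the hypothesis is equivalent to the symmetric form $W - W \subseteq S$ for some $W \in \mathscr{A}$, and the familiar topological-group construction of $V$ with $|F^*|\cdot V \subseteq U$ is immediate --- but the same combinatorial argument can be pushed through in general with some care about signs. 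Finally, the in-text corollary follows at once: under both hypotheses of Proposition \ref{Prop:SecondUniqueUncondSums} sums are unique, so combining this with the assumption that subfamilies of summable families remain summable gives the existence of every $s_J$ and upgrades ``$x$ is a sum'' to ``$x$ is \emph{the} sum,'' which is precisely Axiom \ref{Axiom:InsertAssociativity}.
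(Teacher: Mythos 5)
Your setup coincides with the paper's: the same choice of $T,U\in\mathscr{A}$ with $T-U\subseteq S$, the same witnessing set $F=\{J\in P : J\cap F_{T,\underline{a}}\neq\emptyset\}$, and the same telescoping identity. The gap sits exactly where you flag ``the main obstacle,'' and the proposed repair does not close it. Iterating the hypothesis to produce a chain $T_k-U_k\subseteq U_{k-1}$ only lets you absorb \emph{nested differences}, whereas the aggregated error $\sum_{J\in F^{\ast}}(\sigma_J-s_J)$ is a \emph{sum} of terms, each of which you can force into a prescribed member of $\mathscr{A}$ but never into the negative of one ($\mathscr{A}$ is not assumed closed under negation, nor to contain $0$). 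What you would need is sets $V_1,\dots,V_n\in\mathscr{A}$ with $V_1+\cdots+V_n\subseteq U$, and the difference hypothesis does not supply them: in $X=\Z$ the collection $\mathscr{A}=\{\{n\} : n\in \Z_{>0}\}$ satisfies the hypothesis (take $\{n+1\}-\{1\}\subseteq\{n\}$), yet no two members of $\mathscr{A}$ sum into $\{1\}$. The symmetric form $W-W\subseteq S$ you invoke in the filtered case has the same defect: it controls $e-e'$, not $e+e'$, unless $\mathscr{A}$ contains symmetric sets $V=-V$, which is not given.

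The paper sidesteps the $n$-fold decomposition entirely. It first notes, from the proof of Proposition \ref{Prop:SecondUniqueUncondSums} (where a \emph{single} containment $T-U\subseteq S$ suffices because the relevant identity really is a difference of two approximation errors), that unconditional sums respect finite addition; hence the subfamily of $\underline{a}$ indexed by $\bigcup_{J\in F^{\ast}}J$ has $\sum_{J\in F^{\ast}}s_J$ as an unconditional sum. A single application of Definition \ref{Def:UncondSums} to that one family, with the one set $U$, then controls the entire quantity $\sum_{i\in F}a_i-\sum_{J\in F^{\ast}}s_J$ at once, and your telescoping identity finishes the argument. To salvage your write-up, replace the per-block approximation and the $\varepsilon/n$-style bookkeeping by this one appeal to finite additivity of unconditional sums.
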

\begin{proof}
Let $S\in \mathscr{A}$.  Fix $T,U\in \mathscr{A}$ with $T-U\subseteq S$.  There exists a finite set $F_{T,\underline{a}}\subseteq I$, such that $\sum_{i\in F'}a_i-x\in T$, for each finite set $F'$ satisfying $F_{T,\underline{a}}\subseteq F'\subseteq I$.

Fix $F_{S,\underline{s}}:=\{J\in P\,:\, J\cap F_{T,\underline{a}}\neq \emptyset\}$, which is a finite subset of $P$.  Let $F^{\ast}$ be an arbitrary finite set satisfying $F_{S,\underline{s}}\subseteq F^{\ast}\subseteq P$.

By the proof of Proposition \ref{Prop:SecondUniqueUncondSums}, unconditional sums respect (finite) addition.  So, there is a finite set $F\subseteq I$ containing $F_{T,\underline{a}}$, such that $\sum_{i\in F}a_i - \sum_{J\in F^{\ast}}s_J\in U$.  We then find
\[
\sum_{J\in F^{\ast}}s_J-x = \left(\sum_{i\in F}a_i-x\right)-\left(\sum_{i\in F}a_i-\sum_{J\in F^{\ast}}s_J\right)\in T-U\subseteq S.
\]
The last sentence of the proposition immediately follows.
\end{proof}

\begin{thm}
Let $(X,+)$ be an abelian group, and let $\mathscr{A}\subseteq \power(X)$.  Assume
\begin{itemize}
\item for each $S\in \mathscr{A}$ there exist $T,U\in \mathscr{A}$ with $T-U\subseteq S$.
\end{itemize}
If a sequence $\underline{a}\in X^I$ is unconditionally summable, then it is \emph{sum-Cauchy}, meaning that for each $S\in \mathscr{A}$ there exists a finite set $F\subseteq I$ such that for each finite set $F'\subseteq I$ with $F\cap F'=\emptyset$ it holds that $\sum_{i\in F'}a_i\in S$.

Thus, being sum-Cauchy is necessary for summability.  It is sufficient if and only if subfamilies of summable families remain summable.
\end{thm}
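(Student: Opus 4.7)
The plan is to establish necessity of the sum-Cauchy condition first, then each direction of the biconditional regarding its sufficiency. For necessity, I would mimic the combination argument in the proof of Proposition~\ref{Prop:SecondUniqueUncondSums}. Let $\underline{a}\in X^I$ be unconditionally summable to $x$. Given $S\in\mathscr{A}$, choose $T,U\in\mathscr{A}$ with $T-U\subseteq S$, and let $F_T,F_U\subseteq I$ be finite witnesses of summability of $\underline{a}$ with respect to $T$ and $U$. Setting $F:=F_T\cup F_U$, both $F$ and $F\cup F'$ contain $F_T$ and $F_U$ for any finite $F'\subseteq I$ disjoint from $F$, so
\[
\sum_{i\in F'}a_i = \left(\sum_{i\in F\cup F'}a_i-x\right)-\left(\sum_{i\in F}a_i-x\right)\in T-U\subseteq S,
\]
showing that $\underline{a}$ is sum-Cauchy.

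For the forward direction of the biconditional, suppose every sum-Cauchy family is summable. Given a summable $\underline{a}\in X^I$ and any $I'\subseteq I$, observe that $(a_i)_{i\in I'}$ is sum-Cauchy: if $F$ witnesses sum-Cauchy for $\underline{a}$ with respect to $S$, then $F\cap I'$ witnesses it for the subfamily, since any finite $F''\subseteq I'$ disjoint from $F\cap I'$ is automatically disjoint from $F$. Applying the hypothesis then yields summability of the subfamily.

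For the backward direction, assume the subfamily axiom. Let $\underline{a}\in X^I$ be sum-Cauchy; the key maneuver is a doubling construction. Index a new family $\underline{b}$ by $I\sqcup I'$, where $I'$ is a disjoint copy of $I$, with $b$ equal to $\underline{a}$ on $I$ and $-\underline{a}$ on $I'$. I would show that $\underline{b}$ has $0$ as an unconditional sum: given $S$, choose $T,U$ with $T-U\subseteq S$, pick sum-Cauchy witnesses $F_T,F_U\subseteq I$ for $\underline{a}$, and take the witness for $\underline{b}$ to be $F_T\cup F_U$ placed on both sides of $I\sqcup I'$. For any finite $F\subseteq I\sqcup I'$ containing this witness, let its positive part $F_+$ and negative part $F_-$ (both subsets of $I$ containing $F_T\cup F_U$) satisfy, by sum-Cauchy applied to the disjoint increments $F_{\pm}\setminus(F_T\cup F_U)$,
\[
\sum_{i\in F_+}a_i = \sum_{i\in F_T\cup F_U}a_i+t\quad\text{and}\quad\sum_{i\in F_-}a_i = \sum_{i\in F_T\cup F_U}a_i+u,
\]
with $t\in T$ and $u\in U$; subtracting gives $\sum b = t-u\in T-U\subseteq S$. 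Assuming $\Sigma_{\mathscr{A}}$ is nonempty, the empty family is a subfamily of any summable family, hence summable with unique sum by the subfamily axiom, which via Proposition~\ref{Prop:UniqueUncondSums} forces $\bigcap_{S\in\mathscr{A}}S=\{0\}$; Proposition~\ref{Prop:SecondUniqueUncondSums} then upgrades this to uniqueness of every unconditional sum. Therefore $\underline{b}$ is summable in $\Sigma_{\mathscr{A}}$, and since $\underline{a}$ is the restriction of $\underline{b}$ to $I$ (up to the reindexing invariance that $\Sigma_{\mathscr{A}}$ always enjoys), the subfamily axiom finally delivers summability of $\underline{a}$.

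The main obstacle will be arranging the sum-Cauchy witnesses in the doubling construction so that the common contribution on $F_T\cup F_U$ cancels exactly between the two copies, leaving only the difference $t-u\in T-U$; once that cancellation is verified, the remaining pieces (reindexing invariance, forcing $\bigcap_{S\in\mathscr{A}}S=\{0\}$ from the subfamily axiom, and extracting $\underline{a}$ as a subfamily of $\underline{b}$) follow from results already established in the section.
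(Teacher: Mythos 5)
Your proof follows the paper's argument essentially verbatim: the same $T-U$ telescoping $\sum_{i\in F'}a_i=\sum_{i\in F\cup F'}a_i-\sum_{i\in F}a_i$ for necessity, the same observation that the sum-Cauchy property passes to subfamilies for the forward direction of the biconditional, and the same doubling construction (merging $\underline{a}$ with a disjoint copy of $-\underline{a}$ and showing $0$ is an unconditional sum via cancellation of the common part over $F_T\cup F_U$) for the backward direction. The only divergence is your added digression on uniqueness of sums, which the paper does not need (it reads ``summable'' as ``has an unconditional sum''); be aware that in that digression the empty family having a unique unconditional sum only forces $\bigl|\bigcap_{S\in\mathscr{A}}S\bigr|=1$, not $\bigcap_{S\in\mathscr{A}}S=\{0\}$.
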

\begin{proof}
Let $S\in \mathscr{A}$ and fix $T,U\in \mathscr{A}$ such that $T-U\subseteq S$.   Let $F_{T,\underline{a}}$ and $F_{U,\underline{a}}$ satisfy the appropriate condition from Definition \ref{Def:UncondSums}.  Take $F:=F_{T,\underline{a}}\cup F_{U,\underline{a}}$, and let $F'\subseteq I$ be any finite set with $F\cap F'=\emptyset$.  We find
\[
\sum_{i\in F'}a_i =\sum_{i\in F\cup F'}a_i - \sum_{i\in F}a_i\in T-U\subseteq S.
\]

For the forward direction of the biconditional, it suffices to observe that the sum-Cauchy property passes to subfamilies; the details are as follows.  Let $\underline{a}\in X^I$ be a sum-Cauchy family.  Fix a finite set $F\subseteq I$ such that for each finite set $F'\subseteq I$ with $F\cap F'=\emptyset$, then $\sum_{i\in F'}a_i\in S$.  Let $J\subseteq I$ be arbitrary.  We can use the finite set $F\cap J\subseteq J$ to verify the sum-Cauchy condition for the subfamily indexed by $J$, since for each finite set $F'\subseteq J$ with $(F\cap J)\cap F'=\emptyset$, we have both $F'\subseteq I$ and  $F\cap F'=\emptyset$, so $\sum_{i\in F'}a_i\in S$.

For the backward direction, we use an argument suggested to me by George Bergman twenty years ago.  Assume that subfamilies of summable families are summable.  Let $\underline{a}\in X^I$ be sum-Cauchy.  Also, let $\varphi\colon I\to J$ be a bijection, where $I\cap J=\emptyset$. Finally, let $\underline{b}\in X^{I\cup J}$ be the family where
\[
b_k=\begin{cases}
a_k & \text{ if $k\in I$},\\
-a_{\varphi^{-1}(k)} & \text{ if $k\in J$}.
\end{cases}
\]
In other words, $\underline{b}$ is the merger of $\underline{a}$ and a disjoint copy of $-\underline{a}$.

We claim that $\underline{b}$ is unconditionally summable, with sum $0$.  To see this, let $S\in \mathscr{A}$, and fix $T,U\in \mathscr{A}$ with $T-U\subseteq S$.  Further, let $F_1\subseteq I$ be a finite subset such that for any finite set $F'\subseteq I$ with $F\cap F'=\emptyset$, then $\sum_{i\in F'}a_i\in T$.  Similarly, let $F_2\subseteq I$ do the same thing, relative to $U$.  Finally, let
\[
F^{\ast}:=(F_1\cup F_2)\cup \varphi(F_1\cup F_2)\subseteq I\cup J.
\]
For each finite set $F'$ with $F^{\ast}\subseteq F'\subseteq I\cup J$, we find
\[
\sum_{k\in F'}b_k-0 = \sum_{k\in (F'\setminus (F_1\cup F_2))\cap I}a_k -\sum_{k\in (F'\setminus \varphi(F_1\cup F_2))\cap J} a_{\varphi^{-1}(k)}\in T-U\subseteq S,
\]
thus verifying the unconditional summability of $\underline{b}$.  Since $\underline{a}$ is a subfamily, it is summable.
\end{proof}

The sum-Cauchy condition is a modification of the usual Cauchy condition for sequences.  When the sum-Cauchy condition is equivalent to unconditional summability, one might say that $\mathscr{A}$ is a \emph{sum-complete} system.  This is a weakening of the notion of a \emph{complete} topology.

We now have natural conditions on $\mathscr{A}$ wherewith the unconditional summation $\Sigma_{\mathscr{A}}$ satisfies the previously numbered axioms.  Next, we change gears to determine from a summation system $\Sigma$ the best possible collection of sets $\mathscr{A}\subseteq \power(X)$ under which we might expect $\Sigma$ to be a subsystem of $\Sigma_{\mathscr{A}}$.  To accomplish this task, we use a result that is parallel with the fundamental theorem of topological limits (i.e., Proposition \ref{Prop:FinestTopologyWithLimitsDetermined}).

\begin{lemma}\label{Lemma:SubsetsUncondReversal}
Let $(X,+)$ be an abelian group, and let $\mathscr{A}\subseteq \mathscr{B}\subseteq \power(X)$.  If $x\in X$ is an unconditional sum for a family $\underline{a}\in X^I$ relative to $\mathscr{B}$, then $x$ is also an unconditional sum for $\underline{a}$ relative to $\mathscr{A}$.

Given any collection of pairs $(\underline{a},x)$, there is a largest $\mathscr{C}\subseteq \power(X)$ such that $\underline{a}$ is summable to $x$ relative to $\mathscr{C}$, for each such pair.  If $\mathscr{C}\neq \power(X)$, then $\mathscr{C}$ is a filter on $X$.
\end{lemma}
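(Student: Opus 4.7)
The first paragraph of the statement is an immediate consequence of Definition \ref{Def:UncondSums}: the defining condition of unconditional summability quantifies universally over the given collection, so enlarging the collection strengthens the condition. Given $(\underline{a},x)$ unconditionally summable to $x$ relative to $\mathscr{B}$ and any $S\in\mathscr{A}$, we have $S\in\mathscr{B}$, so the witnessing finite set $F_{S,\underline{a}}\subseteq I$ provided by $\mathscr{B}$-summability works equally well to witness the $\mathscr{A}$-summability condition for $S$.

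For the second paragraph, my plan is to define
\[
\mathscr{C}:=\{S\subseteq X\, :\, \text{for each prescribed pair $(\underline{a},x)$ with $\underline{a}\in X^I$, some finite $F\subseteq I$}
\]
\[
\text{satisfies $\sum_{i\in F'}a_i-x\in S$ for every finite $F'$ with $F\subseteq F'\subseteq I$}\},
\]
and then verify directly that $\mathscr{C}$ is a collection under which every prescribed pair is summable, that it contains every other such collection (by the first paragraph), and that under the hypothesis $\mathscr{C}\neq\power(X)$ it is a filter. Summability relative to $\mathscr{C}$ is immediate from the definition. Maximality follows from the first paragraph: if some $\mathscr{A}$ also makes every prescribed pair summable, then for each $S\in\mathscr{A}$ the $\mathscr{A}$-witness for $(\underline{a},x)$ verifies the condition putting $S$ into $\mathscr{C}$, so $\mathscr{A}\subseteq \mathscr{C}$.

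The filter verification is routine and proceeds by checking the standard axioms. Membership of $X$ is witnessed by $F=\emptyset$ for every pair, since $\sum_{i\in F'}a_i-x\in X$ trivially. Upward closure follows by re-using the same witness: if $S\subseteq T$ and $F$ witnesses $S\in\mathscr{C}$ for some pair, then the same $F$ witnesses $T\in\mathscr{C}$ because $S\subseteq T$. For finite intersections, if $F_1$ and $F_2$ witness membership of $S_1$ and $S_2$ for a given pair, then $F_1\cup F_2$ is a finite subset of $I$ and any finite $F'$ with $F_1\cup F_2\subseteq F'\subseteq I$ contains both, so $\sum_{i\in F'}a_i-x$ lies in $S_1\cap S_2$. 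Finally, upward closure forces $\mathscr{C}=\power(X)$ as soon as $\emptyset\in\mathscr{C}$, so the properness hypothesis $\mathscr{C}\neq\power(X)$ is equivalent to $\emptyset\notin\mathscr{C}$, completing the filter axioms.

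The only nontrivial bookkeeping is to keep straight, in the definition of $\mathscr{C}$, that the witness $F$ is allowed to depend on the pair $(\underline{a},x)$ as well as on $S$; this is why the construction respects arbitrary collections of pairs simultaneously rather than merely pointwise, and it is also what makes the finite-intersection step go through uniformly over all pairs. No step presents a real obstacle; the first paragraph is essentially tautological and the filter axioms are each handled by picking the obvious witness.
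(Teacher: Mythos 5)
Your proposal is correct and follows essentially the same route as the paper: define $\mathscr{C}$ as the set of all $S\subseteq X$ satisfying the witnessing condition for every prescribed pair, note $X\in\mathscr{C}$, check upward closure by reusing witnesses, check finite intersections via $F_{S,\underline{a}}\cup F_{T,\underline{a}}$, and observe that $\emptyset\in\mathscr{C}$ forces $\mathscr{C}=\power(X)$. Your derivation of that last point from upward closure is marginally slicker than the paper's (which notes that $\emptyset\in\mathscr{C}$ means there are no unconditional sums at all), but the two arguments are interchangeable.
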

\begin{proof}
For each set $S\in \mathscr{A}$, we have $S\in \mathscr{B}$.  The existence of an appropriate finite set $F_{S,\underline{a}}$ is guaranteed by the assumption that $x$ is a $\mathscr{B}$-unconditional sum for $\underline{a}$.

For the second paragraph, let $\mathscr{C}$ consist of all sets $S\subseteq X$ satisfying the needed condition of Definition \ref{Def:UncondSums}, for each given pair $(\underline{a},x)$.  In particular, $X\in \mathscr{C}$.

Given any $S\in \mathscr{C}$, if $S\subseteq S'\subseteq X$, then we see that $S'\in \mathscr{C}$ by simply taking $F_{S',\underline{a}}$ to be $F_{S,\underline{a}}$ (for each family $\underline{a}$ appearing as a first coordinate).  Thus, $\mathscr{C}$ is upwards closed.

Next, given $S,T\in \mathscr{C}$, we see that $S\cap T\in \mathscr{C}$ by simply taking $F_{S\cap T,\underline{a}}$ to be $F_{S,\underline{a}}\cup F_{T,\underline{a}}$.

Finally, if $\emptyset \in \mathscr{C}$, then there are no unconditional sums, and so $\mathscr{C}=\power(X)$.  Contrapositively, if $\mathscr{C}\neq \power(X)$, then $\emptyset\notin \mathscr{C}$, and so $\mathscr{C}$ is a filter.
\end{proof}

One cannot strengthen the first conclusion of Lemma \ref{Lemma:SubsetsUncondReversal} to claim that $\Sigma_{\mathscr{A}}\supseteq \Sigma_{\mathscr{B}}$, because a family may have multiple unconditional sums relative to $\mathscr{A}$ but a unique unconditional sum relative to $\mathscr{B}$; examples are easy to construct.  However, that stronger conclusion does hold if all unconditional sums relative to $\mathscr{A}$ are unique.

We are now equipped to describe the ``best'' filter under which the sums in a summation system $\Sigma$ are also unconditional sums (even if not necessarily uniquely so).

\begin{definition}
Given an abelian group $(X,+)$ with a summation system $\Sigma$, then the filter described in Lemma \ref{Lemma:SubsetsUncondReversal} for the ordered pairs in $\Sigma$ is the {\bf $\Sigma$-filter}, denoted $\mathscr{F}_{\Sigma}$.
\end{definition}

As mentioned previously, unconditional summability is usually defined by taking $\mathscr{A}$ to be the collection of neighborhoods of $0$ under some topology on $X$.  Generally, from the set of unconditional sums $\Sigma_{\mathscr{A}}$ we can only recover the filter $\mathscr{F}_{\Sigma_{\mathscr{A}}}$, which contains those neighborhoods of zero (possibly properly).  Since $\mathscr{F}_{\Sigma_{\mathscr{A}}}\cup\{\emptyset\}$ is itself a topology, we do recover a topology, but in some sense it is ``concentrated'' around $0$.

There is an order reversing relationship between summation systems and their corresponding filters.

\begin{lemma}\label{Lemma:OrderReverseFilter}
Let $(X,+)$ be an abelian group.  If $\Sigma\subseteq \Sigma'$ are summation systems on $X$, then $\mathscr{F}_{\Sigma}\supseteq \mathscr{F}_{\Sigma'}$.
\end{lemma}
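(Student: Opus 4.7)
The plan is to deduce the inclusion directly from the universal/maximality characterization of the $\Sigma$-filter supplied by Lemma \ref{Lemma:SubsetsUncondReversal}. Recall that $\mathscr{F}_{\Sigma'}$ is by definition the \emph{largest} subset $\mathscr{C}\subseteq \power(X)$ such that every pair $(\underline{a},x)\in\Sigma'$ has $x$ as an unconditional sum of $\underline{a}$ relative to $\mathscr{C}$. Since $\Sigma\subseteq\Sigma'$, every pair in $\Sigma$ also lies in $\Sigma'$, and so $\mathscr{F}_{\Sigma'}$ automatically makes each pair in $\Sigma$ unconditionally summable as well.

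First I would observe that we may assume $\mathscr{F}_{\Sigma'}\neq \power(X)$, because otherwise the claim is trivially contained in any other filter (and in fact $\mathscr{F}_{\Sigma}\subseteq \power(X)$ always). Second, applying the second paragraph of Lemma \ref{Lemma:SubsetsUncondReversal} to the collection of pairs in $\Sigma$, I would note that $\mathscr{F}_{\Sigma}$ is the largest subset of $\power(X)$ with the property that each $(\underline{a},x)\in \Sigma$ is unconditionally summable relative to it. Since $\mathscr{F}_{\Sigma'}$ is one such subset, maximality gives $\mathscr{F}_{\Sigma'}\subseteq \mathscr{F}_{\Sigma}$, which is exactly the desired inclusion $\mathscr{F}_{\Sigma}\supseteq \mathscr{F}_{\Sigma'}$.

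There is no real obstacle here; the lemma is a formal consequence of the fact that $\mathscr{F}_{(-)}$ is defined by a universal property that is contravariant in the collection of pairs (smaller set of pairs imposes fewer constraints, hence admits a larger maximal witness). If pressed to make things more concrete, I could unfold the argument: given $S\in \mathscr{F}_{\Sigma'}$ and any $(\underline{a},x)\in \Sigma$, then $(\underline{a},x)\in \Sigma'$, so there is a finite set $F_{S,\underline{a}}\subseteq I$ such that $\sum_{i\in F'}a_i-x\in S$ whenever $F_{S,\underline{a}}\subseteq F'\subseteq I$ is finite; the same witness $F_{S,\underline{a}}$ verifies membership of $S$ in the collection from which $\mathscr{F}_{\Sigma}$ is constructed, yielding $S\in \mathscr{F}_{\Sigma}$ directly.
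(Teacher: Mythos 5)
Your proposal is correct and follows essentially the same route as the paper: both deduce the inclusion from the maximality characterization of the $\Sigma$-filter in Lemma \ref{Lemma:SubsetsUncondReversal}, noting that since $\Sigma\subseteq\Sigma'$, the set $\mathscr{F}_{\Sigma'}$ is one of the candidates over which $\mathscr{F}_{\Sigma}$ is maximal. The concrete unfolding in your final paragraph (reusing the witness $F_{S,\underline{a}}$) is a valid elaboration of the same idea, and the aside about $\mathscr{F}_{\Sigma'}=\power(X)$ is harmless but unnecessary.
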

\begin{proof}
This follows from the fact that $\mathscr{F}_{\Sigma}$ (respectively, $\mathscr{F}_{\Sigma'}$) is the largest subset $\mathscr{C}\subseteq \power(X)$ where the sums in $\Sigma$ (respectively, $\Sigma'$) are unconditional sums relative to $\mathscr{C}$.
\end{proof}

Lemmas \ref{Lemma:SubsetsUncondReversal} and \ref{Lemma:OrderReverseFilter} give us a ``near'' Galois connection between subsets of $\power(X)$ and summation systems on $X$.  This induces a map that acts almost like a closure operator, just as in Theorem \ref{Thm:BigTopTheorem}.  Its relevant properties are as follows.

\begin{thm}
Let $(X,+)$ be an abelian group.  If $\psi\colon \power(\power(X))\to \power(\power(X))$ is the map defined by the rule $\mathscr{A}\mapsto \mathscr{F}:=\mathscr{F}_{\Sigma_{\mathscr{A}}}$, then
\begin{itemize}
\item[\textup{(1)}] $\psi$ is extensive and idempotent,
\item[\textup{(2)}] $\Sigma_{\mathscr{A}}\subseteq \Sigma_{\psi(\mathscr{A})}$ for each $\mathscr{A}\subseteq \power(X)$, and
\item[\textup{(3)}] when restricted to those elements $\mathscr{A}\in \power(\power(X))$ whose unconditional sums are unique, $\psi$ is a closure operator and $\Sigma_{\mathscr{A}}= \Sigma_{\psi(\mathscr{A})}$.
\end{itemize}
\end{thm}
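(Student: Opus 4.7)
The plan is to deduce all three conclusions from the near–Galois connection encoded by Lemmas \ref{Lemma:SubsetsUncondReversal} and \ref{Lemma:OrderReverseFilter}, handling uniqueness of unconditional sums carefully at each step.

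First I would establish extensivity of $\psi$. By the very definition of $\mathscr{F}_{\Sigma_{\mathscr{A}}}$ (as the largest collection $\mathscr{C}\subseteq\power(X)$ relative to which every pair $(\underline{a},x)\in\Sigma_{\mathscr{A}}$ is an unconditional sum), and since $\Sigma_{\mathscr{A}}$ tautologically consists of pairs that are unconditional sums relative to $\mathscr{A}$, one gets $\mathscr{A}\subseteq\mathscr{F}_{\Sigma_{\mathscr{A}}}=\psi(\mathscr{A})$.

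Next I would prove part (2). Let $(\underline{a},x)\in\Sigma_{\mathscr{A}}$. Existence of $x$ as an unconditional sum relative to $\psi(\mathscr{A})$ is immediate from the definition of $\mathscr{F}_{\Sigma_{\mathscr{A}}}$. The issue is uniqueness. Suppose $x'$ is also an unconditional sum of $\underline{a}$ relative to $\psi(\mathscr{A})$. By extensivity, $\mathscr{A}\subseteq\psi(\mathscr{A})$, so by the first statement of Lemma \ref{Lemma:SubsetsUncondReversal}, $x'$ is also an unconditional sum relative to $\mathscr{A}$. But $(\underline{a},x)\in\Sigma_{\mathscr{A}}$ means $x$ is the \emph{unique} unconditional sum relative to $\mathscr{A}$, so $x=x'$. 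Hence $(\underline{a},x)\in\Sigma_{\psi(\mathscr{A})}$.

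Idempotence then falls out: extensivity gives $\psi(\mathscr{A})\subseteq\psi(\psi(\mathscr{A}))$, while part (2) combined with Lemma \ref{Lemma:OrderReverseFilter} gives $\psi(\psi(\mathscr{A}))=\mathscr{F}_{\Sigma_{\psi(\mathscr{A})}}\subseteq\mathscr{F}_{\Sigma_{\mathscr{A}}}=\psi(\mathscr{A})$.

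For part (3), I would first invoke the remark following Lemma \ref{Lemma:SubsetsUncondReversal}: if $\mathscr{A}\subseteq\mathscr{B}$ and unconditional sums relative to $\mathscr{A}$ are unique, then $\Sigma_{\mathscr{B}}\subseteq\Sigma_{\mathscr{A}}$. Applying Lemma \ref{Lemma:OrderReverseFilter} yields $\psi(\mathscr{A})\subseteq\psi(\mathscr{B})$, establishing monotonicity on the restricted domain. Combined with extensivity and idempotence, $\psi$ is a closure operator there. For the equality $\Sigma_{\mathscr{A}}=\Sigma_{\psi(\mathscr{A})}$, the inclusion $\subseteq$ is part (2). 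For the reverse, let $(\underline{a},x)\in\Sigma_{\psi(\mathscr{A})}$. By Lemma \ref{Lemma:SubsetsUncondReversal}, $x$ is an unconditional sum relative to $\mathscr{A}$, and by the uniqueness hypothesis on $\mathscr{A}$ it is the unique such sum, so $(\underline{a},x)\in\Sigma_{\mathscr{A}}$.

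The main obstacle, such as it is, is bookkeeping around uniqueness: the general Galois-style arguments only give one-sided containments of summation systems, and one must repeatedly use the hypothesis that sums are unique (either given, as in part (3), or holding in $\Sigma_{\mathscr{A}}$ by construction, as in part (2)) to upgrade existence of a sum into membership in $\Sigma$. No new computations beyond these lemma invocations should be needed.
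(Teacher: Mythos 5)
Your proposal is correct and follows essentially the same route as the paper: extensivity from the maximality defining $\mathscr{F}_{\Sigma_{\mathscr{A}}}$, part (2) by combining guaranteed existence with uniqueness pulled back along Lemma \ref{Lemma:SubsetsUncondReversal}, idempotence from extensivity plus Lemma \ref{Lemma:OrderReverseFilter}, and part (3) via the monotonicity observation for unique-sum systems. Your treatment of the uniqueness bookkeeping in part (2) is slightly more explicit than the paper's, but the content is identical.
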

\begin{proof}
Throughout the proof, for notational convenience let $\mathscr{A}\subseteq \power(X)$ be arbitrary, take $\Sigma:=\Sigma_{\mathscr{A}}$, and take $\Sigma':=\Sigma_{\psi(\mathscr{A})}$.

{\bf (1) and (2)}:  Note that, by definition of $\Sigma$, the sums under $\Sigma$ are unconditional sums relative to $\mathscr{A}$.  We thus have $\mathscr{A}\subseteq \psi(\mathscr{A})$, since $\psi(\mathscr{A})$ is the largest $\mathscr{C}\subseteq \power(X)$ such that the sums in $\Sigma$ are unconditional sums relative to $\mathscr{C}$.  This proves the extensive property.

Letting $(\underline{a},x)\in \Sigma$,  then $\underline{a}$ has $x$ as its \emph{unique} unconditional sum relative to $\mathscr{A}$, and so the same is true for the larger system $\psi(\mathscr{A})$ by Lemma \ref{Lemma:SubsetsUncondReversal}, noting that $\underline{a}$ continues to have at least one unconditional sum relative to $\psi(\mathscr{A})$.  Thus $\Sigma\subseteq \Sigma'$.

Applying Lemma \ref{Lemma:OrderReverseFilter}, we have $\psi(\mathscr{A})\supseteq \psi^2(\mathscr{A})$.  The reverse holds from the extensive property, and so $\psi$ is idempotent.

{\bf (3)}:  Assume $\mathscr{A}\subseteq \mathscr{B}\subseteq \power(X)$, and also assume that the unconditional sums relative to $\mathscr{A}$ are unique when they exist.  Then $\Sigma_{\mathscr{A}}\supseteq \Sigma_{\mathscr{B}}$, as noted in the paragraph following the proof of Lemma \ref{Lemma:SubsetsUncondReversal}.  Applying Lemma \ref{Lemma:OrderReverseFilter} once again, we see that $\psi$ is monotonic, and hence it is a closure operator.

Taking $\mathscr{B}:=\psi(\mathscr{A})$ in the previous paragraph yields $\Sigma'\subseteq \Sigma$, and the reverse inclusion was already demonstrated.
\end{proof}

Theorem \ref{Thm:BigTopTheorem}(3) says that all topologies in the image of $\varphi$ are $T_1$.  Proposition \ref{Prop:UniqueUncondSums} gives us a rough analogue of this fact for $\psi$, at least when finite sums behave the way we expect.   There is no analogue for Theorem \ref{Thm:BigTopTheorem}(1), as the following example shows that $\psi$ does depend on the additive structure imposed on $X$.

\begin{example}
Let $X=\Q$, with its usual additive structure.  For each $n\in \N$, let $S_n$ be the interval $(-1/n,1/n)\cap \Q$ (with $S_0=\Q$), and let $\mathscr{A}$ be the collection of these intervals.  Note that $(1,1/2,1/4,\ldots)$ is unconditionally summable, with unique sum $2$, relative to $\mathscr{A}$.  Consequently, $\mathscr{F}_{\Sigma_{\mathscr{A}}}$ is \emph{not} the principal ultrafilter generated by $0$.

For each $n\in \N$, let $T_n=\{0\}\cup\{\pm 5^{m2^n}\, :\, m\in \Z_{\geq 1}\}$ (with $T_0=\Q$), and let $\mathscr{B}$ be the collection of these sets.  It is not hard to show that the unconditionally summable families relative to $\mathscr{B}$ (or even just $T_1$) are exactly the finite families up to core-extensions.  Thus, $\mathscr{F}_{\Sigma_{\mathscr{B}}}$ is the principal ultrafilter generated by $0$.

The set $\Q$ decomposes as the disjoint union
\[
\{0\}\cup (S_0\setminus S_1) \cup (S_1\setminus S_2)\cup \cdots
\]
where each piece is countably infinite except the first.  There is a similar decomposition in terms of the $T_i$.  Thus, there exists a bijection $\Q\to\Q$ sending $0\mapsto 0$ and that restricts to a bijection from $S_n$ to $T_n$ for each $n\in \N$.  (This map cannot be group homomorphism, but it can be chosen to respect negation.)  This bijection allows us to identify $\mathscr{B}$ with $\mathscr{A}$, but with $\Q$ given a different additive structure.  We see that $\psi(\mathscr{A})$ depends on which of those two additive structures on $X$ is used.
\end{example}

Since a topology $\tau$ and its filter $\mathscr{F}$ of neighborhoods of $0$ both give rise to the same type of unconditional summability $\Sigma$, and since $\tau':=\mathscr{F}\cup\{\emptyset\}$ is a topology in its own right, the most one could hope to expect to recover from $\Sigma$ is $\mathscr{F}$ (in the case when $\mathscr{F}$ is the $\Sigma$-filter).  The following example illustrates subtleties in this regard.

\begin{example}\label{Example:RegeneratingRTopTwoWays}
Let $\Sigma$ be the summation system for the (standard) absolutely convergent series on $X=\R$.  It turns out that the $\Sigma$-topology (as in Definition \ref{Def:SigmaTopology}) is the usual topology $\tau$ on $\R$, which is quite surprising because $\Sigma$ is \emph{not} the induced summation system from $\tau$, but rather a small subsystem (after removing any non-ordinal-indexed sums from $\Sigma$).  This claim follows from the computations done in Example \ref{Example:Rchain}, noting the final parenthetical sentence given there.

Thus, one can recover the original topology on $\R$ from the absolutely convergent series system $\Sigma$.  However, it is recovered via the $\Sigma$-topology route.  If, instead, we used the $\Sigma$-filter route, the best we could do is recover the filter of neighborhoods of zero.  To finish this example, let us show that this is exactly the filter we recover.

Suppose, by way of contradiction, there is some $S\in \mathscr{F}_{\Sigma}$ that is not a (standard) neighborhood of $0$.  We can then fix elements $x_0,x_1,\ldots$ not in $S$ with $|x_i|<(i+1)^{-2}$.  Similar to what happened in Example \ref{Example:Rchain}, the family
\[
\underline{a}=(x_0-0, x_1-x_0,x_2-x_1,\ldots)
\]
is absolutely summable, but this time with sum $0$.  Since $S$ is a filter element, there is some finite set $F_{S,\underline{a}}\subseteq \N$ with $\Sigma (a_i)_{i\in F'}-\Sigma\underline{a}\in S$ for each finite set $F'$ satisfying $F_{S,\underline{a}}\subseteq F'\subseteq \N$.  Thus, for some sufficiently large integer $N$ it holds that $x_N=\sum_{i=0}^{N}a_i-0\in S$, giving the needed contradiction.
\end{example}

The previous example suggests that when attempting to recover a topology, it may accidentally turn out better to use the $\Sigma$-topology route, when available, instead of the $\Sigma$-filter route, even when a summation system arises from unconditional summation methods.  The following result partly explains such accidents.

\begin{thm}
Let $(X,+)$ be an abelian group with a summation system $\Sigma$ satisfying
\begin{itemize}
\item reindexing invariance, \textup{Axiom \ref{Axiom:ReindexInvariance}},
\item empty sum existence, \textup{Axiom \ref{Axiom:EmptyExists}}, with $\Sigma()=0$,
\item insertive associativity, \textup{Axiom \ref{Axiom:InsertAssociativity}}, and
\item additive extension closure, \textup{Axiom \ref{Axiom:AdditiveExtClosure}}.
\end{itemize}
In this situation, the $\Sigma$-filter contains the neighborhoods of $0$ in the $\Sigma$-topology.
\end{thm}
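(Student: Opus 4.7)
My plan is to argue by contradiction, using the openness criterion defining $\tau_\Sigma$. Since $\Sigma()=0$ ensures $\Sigma$ is nonempty, Lemma~\ref{Lemma:SubsetsUncondReversal} makes $\mathscr F_\Sigma$ a filter, hence upward closed; so it suffices to show every $\tau_\Sigma$-open $U$ containing $0$ lies in $\mathscr F_\Sigma$. Suppose not: there exists $(\underline a,x)\in\Sigma$ with $\underline a=(a_i)_{i\in I}$ such that for each finite $F\subseteq I$ some finite $F'\supseteq F$ satisfies $\sum_{i\in F'}a_i-x\notin U$. Taking $F=I$ when $I$ is finite would give $0\notin U$, so $I$ is infinite, and by recursion I obtain a strictly increasing chain $F_0\subsetneq F_1\subsetneq\cdots\subseteq I$ of finite subsets all witnessing this badness.

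Let $\tilde I=\bigcup_n F_n$ (countable). Axiom~\ref{Axiom:SubsSummable}, which follows from insertive associativity together with $\Sigma()=0$, makes $\tilde{\underline a}:=(a_i)_{i\in\tilde I}$ summable with some sum $\tilde x$; insertive associativity applied to $\underline a$ with the partition $\{\tilde I,I\setminus\tilde I\}$ then gives $x=\tilde x+y_0$, where $y_0=\Sigma(a_i)_{i\in I\setminus\tilde I}$ (vacuously $0$ when $\tilde I=I$). Setting $t_n=\sum_{i\in F_n}a_i$ and $b_n=\sum_{i\in F_n\setminus F_{n-1}}a_i$ (with $F_{-1}=\emptyset$), insertive associativity on $\tilde{\underline a}$ with the partition $\{F_n\setminus F_{n-1}\}_n$ yields $\Sigma(b_n)_{n\in\N}=\tilde x$. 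The sequence $\underline r:=(t_n-x)_{n\in\N}$ has difference family $d(\underline r)=(-x,b_1,b_2,\ldots)$; additive extension closure (prepending $-x$ to $(b_n)_{n\ge 1}$) shows $\Sigma d(\underline r)=\tilde x-x=-y_0$, so $\lim\underline r=-y_0$. Every entry $r_n=t_n-x$ lies outside $U$, so whenever $-y_0\in U$—in particular in the countable case $\tilde I=I$, where $y_0=0$—a sequence with $\Sigma$-limit in $U$ has no entry in $U$, contradicting the $\tau_\Sigma$-openness of $U$.

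The residual case $\tilde I\subsetneq I$ with $-y_0\notin U$ is where I expect the main technical difficulty. My plan there is to lift to an ordinal-indexed sequence: via reindexing invariance, well-order $I$ as some ordinal $\alpha$ listing the elements of $\tilde I$ first in the chain order, so that by Proposition~\ref{Prop:LimitsPartialSums} the partial-sum sequence $(s_\beta)_{\beta<\alpha}$ has $\Sigma$-limit $x$; translating, $(s_\beta-x)_{\beta<\alpha}$ has $\Sigma$-limit $0\in U$. Openness of $U$ produces a tail $\{\beta\ge\beta_0\}$ inside $U$, and the values $s_{|F_n|-1}-x=t_n-x$ being outside $U$ force $\beta_0\ge\omega$. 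The decisive step—and the main obstacle—is to choose the continuation of the well-ordering on $I\setminus\tilde I$ so that infinitely many partial sums past $\omega$ still fall outside $U$, exploiting the fact that $s_\omega-x$ begins near $-y_0\notin U$ and applying insertive associativity to regroup $(a_i)_{i\in I\setminus\tilde I}$ so as to preserve this badness cofinally in $\alpha$. Arranging such a continuation is the technical heart of the argument; once it is in place the contradiction with openness of $U$ is immediate, and combining the cases finishes the proof.
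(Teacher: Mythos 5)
Your countable case is, in substance, the paper's base case and is correct: you produce a chain of finite partial sums lying outside $U$ whose limit is the total sum, and openness of $U$ in $\tau_{\Sigma}$ gives the contradiction. (Two small repairs: the first entry of $d(\underline{r})$ is $t_0-x$, not $-x$; and rather than translating the partial-sum sequence by $-x$ --- which implicitly needs a translation functoriality you have not assumed --- it is cleaner to normalize $\Sigma\underline{a}=0$ at the outset by adjoining the single entry $-\Sigma\underline{a}$ via additive extension closure, exactly as the paper does.)

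The residual case is a genuine gap, and the plan you sketch will not close it. Once the countable set $\tilde{I}$ is placed first in the well-ordering, every initial segment beyond position $\omega$ is infinite, whereas the hypothesis $U\notin\mathscr{F}_{\Sigma}$ only supplies \emph{finite} witness sets $F'\supseteq F$ with $\sum_{i\in F'}a_i-x\notin U$; it gives no control over sums across infinite initial segments, so there is no mechanism to keep partial sums outside $U$ cofinally past $\omega$. The paper resolves this by proving a stronger claim by transfinite induction on cardinality: after normalizing $\Sigma\underline{a}=0$, for every infinite $J\subseteq I$ there is a set $J'$ with $J\subseteq J'\subseteq I$, $|J'|=|J|$, and $\Sigma(a_i)_{i\in J'}\notin U$. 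The countable case (essentially your argument, run through Lemma \ref{Lemma:SubsequenceLimits}) is the base; at an uncountable cardinal one interleaves an enumeration of $J$ with witness sets of strictly smaller cardinality supplied by the inductive hypothesis, keeping careful track of cardinalities, and well-orders the union so that each witness set is an initial segment of successor order type, whence its sum is a partial sum and Lemma \ref{Lemma:SuccessorLimits} together with \eqref{Eq:SubsequenceProperty} applies. Taking $J=I$ then forces $J'=I$ and $0=\Sigma\underline{a}\notin U$, the desired contradiction. Without some such cardinality-stratified recursion replacing your ``put $\tilde{I}$ first, then continue'' scheme, the uncountable case remains open.
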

\begin{proof}
Let $U\in \tau_{\Sigma}$ with $0\in U$.  Suppose, by way of contradiction, that $U\notin \mathscr{F}_{\Sigma}$.  Fix some family $\underline{a}\in X^I\cap \dom(\Sigma)$ such that for each finite subset $F\subseteq I$ there is another finite subset $F'$ satisfying $F\subseteq F'\subseteq I$ and $\Sigma (a_i)_{i\in F'}-\Sigma \underline{a}\notin U$.  (We implicitly use the second and fourth bullet points here, so that finite sums under $\Sigma$ are just iterated additions.)  Note that $I$ must be infinite, else taking $F=I$, then $F'=I$, which would force $0=\Sigma (a_i)_{i\in F'}-\Sigma \underline{a}\notin U$.

By the fourth bullet point, we may as well assume $\Sigma \underline{a}=0$ (after replacing $\underline{a}$ with the new family where $-\Sigma\underline{a}$ is adjoined to $\underline{a}$, and expanding $F'$ to always include that one new index).

Our new goal is to show that for any infinite subset $J\subseteq I$, there is another set $J'$ satisfying $J\subseteq J'\subseteq I$, with $|J|=|J'|$ and $\Sigma(a_i)_{i\in J'}\notin U$.  This will finish the proof because taking $J=I$, then $J'=I$, hence $0=\Sigma(a_i)_{i\in J'}\notin U$, yielding the needed contradiction.

First, consider the case when $|J|=\aleph_0$.  List the elements of $J$ as $j_0,j_1,\ldots$.  Set $J_0:=\{j_0\}$.  For each $n\in \N$, if a finite set $J_n$ is given, recursively define $J_n'\subseteq I$ to be a finite set containing $J_n$ where $\Sigma(a_i)_{i\in J_n'}\notin U$, and put
\[
J_{n+1}:=J_n'\cup\{j_i\, :\, i\in \N\text{ is the least index such that $j_i\notin J_n'$}\}.
\]  Letting $J':=\bigcup_{n\in \N}J'_n$, we see that $|J'|=\aleph_0=|J|$ with $J\subseteq J'\subseteq I$.

Well-order $J'$, with order type $\omega$, such that the elements of $J_{n}'$ occur before those in $J_{n+1}'\setminus J_{n}'$, for each integer $n\geq 0$.  Let $s_n:=\Sigma (a_i)_{i\in J_{n}'}$ for each $n\in \N$.  After using reindexing invariance to replace $J'$ with $\omega$, then $\underline{s}=(s_0,s_1,\ldots)$ is a cofinal subsequence of the partial sums of $(a_i)_{i\in J'}$, under that $\omega$-ordering of $J'$.

For use shortly, note that insertive associativity implies Axiom \ref{Axiom:InitialSegments}.  Thus, ordinal-indexed $\Sigma$-summable families always have partial sum families.  Further, since $\Sigma$ satisfies postfix associativity (by the third and fourth bullet points) as well as ordinal insertive associativity (by the first and third bullet points), it also satisfies \eqref{Eq:SubsequenceProperty} by Lemma \ref{Lemma:SubsequenceLimits}.  Hence $\underline{s}$ has a $\Sigma$-limit, which is $\Sigma (a_i)_{i\in J'}$.  Since $U$ is open, with $s_n\notin U$ for each $n\in \N$, then $\Sigma (a_i)_{i\in J'}\notin U$.

Now consider the case when $|J|>\aleph_0$.  By transfinite induction, we may assume that the claim is true for all infinite subsets of $I$ of cardinality smaller than $J$.  Index the elements of $J$ by $\alpha$, where $\alpha$ is the first ordinal of cardinality $|J|$.  For each $\beta\in \alpha$, we will recursively define sets $J_{\beta}$ and $J_{\beta}'$ satisfying:
\begin{itemize}
\item[(1)] $J_{\beta}\subseteq J_{\beta}'\subseteq I$,
\item[(2)] $|J_{\beta}'|=|J_{\beta}|=\max(\aleph_0,|\beta|)$,
\item[(3)] $K_{\beta}:=\bigcup_{\gamma\in \beta}J_{\gamma}'\subsetneq J_{\beta}$
\item[(4)] $j_{\beta}\in J_{\beta}$, and
\item[(5)] $\Sigma (a_i)_{i\in J_{\beta}'}\notin U$.
\end{itemize}
To start the recursion, let $J_{0}=\{j_i\, :\, i<\omega\}$.  Note that (3) and (4), as well as the second equality in (2), all hold when $\beta=0$.  If $J_{\beta}$ is defined for some $\beta\in \alpha$, use the inductive assumption to find a set $J_{\beta}'$ satisfying (1), the first equality in (2), and (5).  Finally, if $J_{\gamma}'$ is defined for each $\gamma\in \beta$, when $\beta>0$, then putting
\[
J_{\beta}:=K_{\beta}\cup \{j_{\beta}\}\cup \{j_i\, :\, \text{$i\in \alpha$ is the smallest index with $j_i\notin K_{\beta}$}\}
\]
we find that (3) and (4) hold.  The second equality in (2) also holds when $\beta>0$, as follows.  First, $|J_{\beta}|$ is at least $\aleph_0$, since $J_{\beta}\supseteq J_0'\supseteq J_0$.  Second, $|J_{\beta}|\geq |\beta|$, since $J_{\beta}\supseteq \{j_i\, :\, i<\beta\}$.  Finally, from the definition of $J_{\beta}$ we find
\[
|J_{\beta}| \leq 2 + \sum_{\gamma<\beta}\max(\aleph_0,|\gamma|) \leq \max(\aleph_0,|\beta|).
\]

An argument similar to the one given in the case when $|J|=\aleph_0$ shows that $J':=\bigcup_{\beta\in \alpha}J_{\beta}'$ satisfies the needed conditions to finish the proof.  (As before, well-order the set $J'$ such that all ``new elements'' $J_{\beta}'\setminus K_{\beta}\neq \emptyset$ occur after the ``old elements'' in $K_{\beta}$.  The only slightly tricky issue is to make sure that these new elements are given a successor order type, to guarantee that the sum over $J_{\beta}'$ is a partial sum, by Lemma \ref{Lemma:SuccessorLimits}.)
\end{proof}

Even more can be said when $X$ is a topological group under a first countable topology, as is the case in Example \ref{Example:RegeneratingRTopTwoWays}.  We leave such investigations to the reader.

For endomorphism rings there is an even tighter connection between the method of partial sums and the method of unconditional summing.  Let $R$ be a ring, let $X=\End(M_R)$, where $M$ is a right $R$-module, and finally let $\Sigma$ be the endomorphism ring summation system on $X$ from Example \ref{Example:Endos}.  This type of summation can be handled using topology, as first done by Szele in \cite{Szele}, where a basis of open neighborhoods is given by translations of annihilators of finite subsets of $M$. The interested reader is invited to show that this is a topology that makes $X$ a Hausdorff topological ring, under the usual ring operations on $X$; it is commonly called the \emph{finite topology}.

Continuing the notation from the previous paragraph, it is well-known that $\Sigma$-summation is exactly unconditional summation in the finite topology, using the standard group structure on $X$.  The proof is almost definitional.  This topology makes $X$ a complete topological group, and hence sum-complete.  As the finite topology has a basis of neighborhoods of $0$ consisting of left ideals, the sum-Cauchy property for a family $\underline{a}\in X^I$ is equivalent to the (generally weaker) condition that for each $S\in \mathscr{A}$, then $a_i\in S$ for all but finitely many $i\in I$.

Surprisingly, under the finite topology the unconditional sums are nothing more than reindexed sums coming from partial summation, as we now show.

\begin{prop}
Let $R$ be a ring, let $M_R$ be a right $R$-module, let $X=\End(M_R)$ with its usual ring structure, and let $\Sigma$ be the summation system of \textup{Example \ref{Example:Endos}}.  If $\Sigma_1$ is the restriction of $\Sigma$ to ordinal-indexed families, and if $\Sigma_2$ is the induced summation system under the finite topology on $X$, then $\Sigma_1=\Sigma_2$.
\end{prop}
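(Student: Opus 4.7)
The plan is to establish both inclusions $\Sigma_1 \subseteq \Sigma_2$ and $\Sigma_2 \subseteq \Sigma_1$ by transfinite induction on the order type of the indexing set $I \subseteq {\rm Ord}$.  In both directions, the inductive hypothesis will let me identify the partial sums $p_i$ from \textup{Definition \ref{Definition:InducedSummationFromTopology}} (under the finite topology $\tau$) with the pointwise finite sums $\sum_{j \in I_{\leq i}} a_j(m)$ at each $m \in M$; everything then reduces to comparing convergence in $\tau$ with the ``only finitely many $a_i(m) \neq 0$'' condition from \textup{Example \ref{Example:Endos}}.  The base case $I = \emptyset$ is immediate since $\Sigma_1() = 0 = \Sigma_2()$.

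For the easier direction $\Sigma_1 \subseteq \Sigma_2$, assume $\Sigma_1 \underline{a} = x$.  Since endomorphism summation satisfies \textup{Axiom \ref{Axiom:SubsSummable}}, every initial segment of $\underline{a}$ is $\Sigma_1$-summable, and the inductive hypothesis upgrades these to $\Sigma_2$-sums with the same values, making each $p_i$ defined.  To verify $p_i \to x$ in $\tau$, I take an arbitrary basic open neighborhood $x + U$ of $x$ where $U$ annihilates a finite set $F \subseteq M$, and use the finiteness of each support $S_m := \{i \in I : a_i(m) \neq 0\}$ (for $m \in F$) to pick a single $i^* \in I$ beyond all of $\bigcup_{m \in F} S_m$, past which $p_i$ agrees with $x$ on every $m \in F$.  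Uniqueness of the limit is immediate from the Hausdorff property of the finite topology.

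For $\Sigma_2 \subseteq \Sigma_1$, suppose $\Sigma_2 \underline{a} = x$.  Initial summability \textup{(Theorem \ref{Thm:WhatPartialSummationGetsUs}(3))} gives $\Sigma_2$-summability of each initial segment, which the inductive hypothesis converts to $\Sigma_1$-summability with matching sums, so once again $p_i(m) = \sum_{j \in I_{\leq i}} a_j(m)$.  Fixing $m \in M$, convergence $p_i \to x$ supplies some $i_0 \in I$ with $p_i(m) = x(m)$ for every $i \in I$ with $i \geq i_0$.  The main obstacle is promoting this eventual stabilization of the partial sums into the considerably stronger statement that $a_j(m) = 0$ for \emph{every} $j \in I$ with $j > i_0$.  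I plan to do this by a short well-ordering argument: if the set $\{j \in I : j > i_0,\, a_j(m) \neq 0\}$ were nonempty, its minimum $j_*$ would satisfy $\sum_{k \in I_{<j_*}} a_k(m) = p_{i_0}(m) = x(m)$ by the inductive identification applied to $I_{<j_*}$ together with the minimality of $j_*$, and hence $p_{j_*}(m) = x(m) + a_{j_*}(m) \neq x(m)$, contradicting the choice of $i_0$.  Consequently $S_m \subseteq I_{\leq i_0}$ is finite, so $\underline{a}$ is $\Sigma_1$-summable with value $p_{i_0}(m) = x(m)$ at $m$, closing the induction.
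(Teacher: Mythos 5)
Your proof is correct and follows essentially the same route as the paper's: transfinite induction on the order type of $I$, identification of the partial sums with pointwise finite sums via the inductive hypothesis, and a pointwise analysis at each $m\in M$ using the annihilator neighborhoods of the finite topology. Your minimal-$j_*$ argument is a slightly cleaner packaging of the telescoping contradiction the paper runs in its Case 1 (where a cofinal $\omega$-sequence of indices with $a_i(m)\neq 0$ is extracted); the only nit is that the finiteness of $S_m$ at the end follows not because $I_{\leq i_0}$ is finite (it need not be) but because the initial segment $(a_j)_{j\in I_{<i_0}}$ is $\Sigma_1$-summable by the inductive hypothesis, so its support at $m$ is finite.
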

\begin{proof}
Let $\underline{a}\in X^I$ with $I\subseteq {\rm Ord}$.  If $I=\emptyset$, then both types of sums exists and equal $0$.  Thus, we reduce to the case when $I\neq \emptyset$.  By transfinite induction, assume for all families whose index set is of order type smaller than $\sigma(I)$ that the two types of sums match.

{\bf Case 1}: Suppose, by way of contradiction, that $\underline{a}\in \dom(\Sigma_2)\setminus \dom(\Sigma_1)$.  There then exists some $m\in M$ such that $a_i(m)\neq 0$ for infinitely many $i\in I$.  Fix $i_0<i_1<\ldots$ to be the first $\omega$ such indices, and fix
\[
\beta=\inf\{j\in {\rm Ord}\, :\, j>i_n\text{ for each }n\in \N\}.
\]
Note that $(a_i)_{i\in I_{<\beta}}$ is not $\Sigma_1$-summable, but it is $\Sigma_2$-summable.  Thus, our inductive assumption forces the indices $i_0<i_1<\ldots$ to be cofinal in $I$.  Taking limits in the finite topology, since $\Sigma_2\underline{a}$ exists we find
\[
\Sigma_2\underline{a} = \lim_{i\in I}\Sigma_2(a_j)_{j\in I_{\leq i}}=\lim_{i\in I}\Sigma_1 (a_j)_{j\in I_{\leq i}},
\]
where the first equality comes from Lemma \ref{Lemma:SuccessorLimits} (noting that $\Sigma_2$ satisfies postfix associativity by Theorem \ref{Thm:WhatPartialSummationGetsUs}(4)), and the second equality comes from the inductive assumption.  Thus, $\Sigma_1 (a_j)_{j\in I_{\leq i}}-\Sigma_2\underline{a}\in \ann(m)$ for all sufficiently large indices $i\in I$.  Hence, for all sufficiently large $n\in \N$, we find
\[
\Sigma_1 (a_j)_{j\in I\, :\, i_n<j\leq i_{n+1}} = \Bigl( \Sigma_1 (a_{j})_{j\in I_{\leq i_{n+1}}}-\Sigma_2\underline{a}\Bigr) - \Bigl( \Sigma_1 (a_j)_{j\in I_{\leq i_{n}}}-\Sigma_2\underline{a}\Bigr)\in \ann(m).
\]
However, this cannot be the case because there is exactly one $j\in I$ with $i_n<j\leq i_{n+1}$ where $a_j(m)\neq 0$.

{\bf Case 2}: Suppose, by way of contradiction, that $\underline{a}\in \dom(\Sigma_1)\setminus \dom(\Sigma_2)$.  By our inductive assumption $\underline{s}:=p(\underline{a})$ must exist, and the partial sums agree under $\Sigma_1$ and $\Sigma_2$.  Yet, $\underline{s}$ has no topological limit (else that limit would be unique, and then $\underline{s}\in \dom(\Sigma_2)$). So the order type of $I$ is a limit ordinal, and further there are some elements $m_1,m_2,\ldots, m_{\ell}\in M$ and some cofinal subset $K\subseteq I$ such that $s_k-\Sigma_1 \underline{a}\notin \ann(m_1,m_2,\ldots, m_{\ell})$ for each $k\in K$.  Since, $\sigma(I)$ is a limit ordinal, we can find a single $m\in \{m_1,\ldots, m_{\ell}\}$ such that $s_k-\Sigma_1 \underline{a}\notin \ann(m)$ for each $k\in K$, after passing to a subsequence of $K$ if necessary.  However, there are only finitely many indices $i\in I$ such that $a_i(m)\neq 0$.  Thus, $s_k$ and $\Sigma_1 \underline{a}$ agree on $m$, for each $k$ past that last index, yielding the needed contradiction.

{\bf Case 3}: Assume that $\underline{a}\in \dom(\Sigma_1)\cap \dom(\Sigma_2)$.  It suffices to show that the two sums are the same.  Note that the family of partial sums $\underline{s}=(s_i)_{i\in I}$ exists.  For each $m\in M$, there is a nonempty finite subset $F_m\subseteq I$ such that $a_i(m)\neq 0$ only if $i\in F_m$.  Fixing $\alpha_m=\max(F_m)$, then for each $\beta\in I_{\geq \alpha_m}$
\[
\left(\Sigma_1 \underline{a}\right)(m) = \left(\Sigma_1 (a_i)_{i\in F_m}\right)(m) = s_{\beta}(m).
\]
Thus,
\[
\Sigma_2\underline{a}:=\lim_{\beta\in I}s_{\beta}=\Sigma_1 \underline{a}
\]
since $s_{\beta}-\Sigma_1\underline{a}\in \ann(m)$ for each $m\in M$, for all sufficiently large $\beta$ (depending on $m$).

As there is nothing to show in the fourth case, this finishes the proof.
\end{proof}

The finite topology $\tau$ is Hausdorff, and so the last sentence of Corollary \ref{Cor:InducedSelfMapSpecial} tells us that its induced summation system is unchanged if we use the topology $\varphi(\tau)$ instead.  Putting this another way, endomorphism ring summation $\Sigma$ is maximally defined with respect to (reindexed) partial sums.  Any strictly finer topology than $\varphi(\tau)$ has \emph{fewer} gapless limits, and therefore also fewer partial sums.

This is not to say that the finite topology is always fixed under the induced self-map $\varphi$.  To give one example, take $R$ to be a nonzero ring, take $M_R$ to be an $\aleph_1$-dimensional free $R$-module, say with basis $(b_{\alpha})_{\alpha<\omega_1}$, and fix $X:=\End(M_R)$.  For each ordinal $\alpha<\omega_1$, define an endomorphism $x_{\alpha}\in X$ by first defining it on the basis by the rule
\[
x_{\alpha}(b_{\beta})=\begin{cases}
0 & \text{ if $\beta\leq \alpha$,}\\
b_{\alpha} & \text{ otherwise}
\end{cases}
\]
and then extending to all of $M$ linearly.  Finally, take $S:=\{x_{\alpha}\, :\, \alpha<\omega_1\}$.

The set $S$ is not closed in the finite topology, since the zero endomorphism is the limit of the $x_{\alpha}$, in the given order.  However, $S$ is closed under gapless limits.  To see this, it suffices to show that any sequence $\underline{s}$ that does not consist of finitely many constant strings has no limit.  After passing to a subsequence, we may as well assume $\underline{s}=(s_i)_{i<\omega}$ and $s_{i}\neq s_{i+1}$ for each $i\in \omega$.  Let
\[
\beta:=\inf\left\{\alpha+1\in {\rm Ord}\, :\, x_{\alpha}=s_i\ \text{ for some $i\in \N$}\right\},
\]
and note that $\beta<\omega_1$.  Now, the $s_i$ do not eventually agree on $b_{\beta}$, and so they do not converge in the finite topology.

This raises the following interesting question:

\begin{question}
If $\Sigma$ is endomorphism ring summation, is there a simple description for $\tau_{\Sigma}$ or for $\mathscr{F}_{\Sigma}$?
\end{question}

\section{Distributivity}\label{Section:Distributivity}

Let $X$ be a set with a summation system $\Sigma$.  Assume $X$ has a multiplication operation, written using concatenation.  The only ring-theoretic axiom connecting addition to multiplication is the distributivity axiom.  Infinitizing this rule is painless.

\begin{axiom}[\bf Infinite distributivity\rm ]\label{Axiom:Distributive}
If $(a_i)_{i\in I}$ and $(b_j)_{j\in J}$ are summable, then so is $(a_ib_j)_{(i,j)\in I\times J}$ with
\[
\left[\Sigma (a_i)_{i\in I}\right]\left[ \Sigma(b_j)_{j\in J}\right]=\Sigma(a_ib_j)_{(i,j)\in I\times J}.
\]
\end{axiom}

In the special case when $|I|=1$, and assuming that singletons sum simply, this infinite distributivity axiom becomes a scalar functoriality axiom.

We end this short section by using distributivity to prove a maximality statement about unconditional summability for series.  Let $X=\R$, with its usual topology, and let $\Sigma_{\rm max}$ be the summation system on $X$ described by the unconditional sums of Definition \ref{Def:UncondSums}.  This system satisfies all of the previously numbered axioms, including the new distributivity axiom (under the usual multiplication in $\R$).

\begin{prop}\label{Prop:AbsoluteSeriesCategorical}
There is no summation system on $\R$ properly extending $\Sigma_{\rm max}$ and satisfying all of the previously numbered axioms.
\end{prop}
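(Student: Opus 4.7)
The plan is to suppose toward contradiction that $\Sigma$ is a summation system on $\R$ that properly extends $\Sigma_{\rm max}$ and satisfies every previously numbered axiom, and then to derive a contradiction. First I pick $\underline{a}=(a_i)_{i\in I}\in\dom(\Sigma)\setminus\dom(\Sigma_{\rm max})$. Since $\Sigma_{\rm max}$ on $\R$ coincides with absolute (equivalently, unconditional) summability, $\underline{a}$ violates the unconditional Cauchy criterion, so there exist $\varepsilon>0$ and pairwise disjoint finite blocks $F_1,F_2,\ldots\subseteq I$ with $\bigl|\sum_{i\in F_k}a_i\bigr|\geq\varepsilon$. After passing to a subsequence and applying \textup{Axiom \ref{Axiom:FunctorialNeg}} if necessary, I may assume every block-sum is at least $\varepsilon$. \textup{Axiom \ref{Axiom:InsertAssociativity}} applied to the partition $\{F_1,F_2,\ldots\}\cup\{I\setminus\bigcup_k F_k\}$, together with \textup{Axiom \ref{Axiom:SubsSummable}}, produces a $\Sigma$-summable family $(b_k)_{k\in\N}$ with $b_k=\sum_{i\in F_k}a_i\geq\varepsilon$, the inner sums being ordinary iterated additions by \textup{Axioms \ref{Axiom:EmptyExists}, \ref{Axiom:Singletons}, and \ref{Axiom:AdditiveExtClosure}}. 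Rescaling by $1/\varepsilon$ via \textup{Axiom \ref{Axiom:Distributive}} applied to a singleton yields a $\Sigma$-summable family $(c_k)_{k\in\N}$ of reals with $c_k\geq 1$ for every $k$ and $\Sigma$-sum some $t\in\R$.

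The goal is then to show that no such $(c_k)_{k\in\N}$ can exist. My plan routes through the $\Sigma$-filter $\mathscr{F}_\Sigma$ introduced in Section \ref{Section:UncondSums}. The theorem near the end of that section, whose hypotheses (\textup{Axioms \ref{Axiom:ReindexInvariance}}, \textup{\ref{Axiom:EmptyExists}} with $\Sigma()=0$, \textup{\ref{Axiom:InsertAssociativity}}, and \textup{\ref{Axiom:AdditiveExtClosure}}) we are granting in full, guarantees that every $\tau_\Sigma$-neighborhood of $0$ lies in $\mathscr{F}_\Sigma$. I then pair this with the standard open interval $U=(-\tfrac{1}{2},\tfrac{1}{2})$: the Cauchy condition for $(c_k)_k$ relative to $U$ would yield a finite $F\subseteq\N$ with $\bigl|\sum_{k\in F'}c_k-t\bigr|<\tfrac{1}{2}$ for every finite $F'\supseteq F$, but adjoining a single new index $k^*\notin F$ shifts the partial sum by $c_{k^*}\geq 1$ and so forces the absolute value to be at least $\tfrac{1}{2}$. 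Hence $U\notin\mathscr{F}_\Sigma$, and the cited theorem rules out $U$ being a $\tau_\Sigma$-neighborhood of $0$.

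The main obstacle is the remaining topological upgrade: to close the argument I must show that $U=(-\tfrac{1}{2},\tfrac{1}{2})$ nonetheless \emph{is} $\tau_\Sigma$-open around $0$, contradicting the previous paragraph. By \textup{Lemma \ref{Lemma:TopologyReversal}} combined with \textup{Example \ref{Example:RegeneratingRTopTwoWays}} we already know $\tau_\Sigma\subseteq\tau_{\Sigma_{\rm max}}=\tau_{\rm standard}$, so $\tau_\Sigma$ is at best a coarsening of the standard topology, and the reverse inclusion is not automatic. I plan to obtain $\tau_{\rm standard}\subseteq\tau_\Sigma$ by combining \textup{Axioms \ref{Axiom:FunctorialAdd} and \ref{Axiom:FunctorialNeg}} with the ordinal-form axioms (\textup{\ref{Axiom:OrdinalReindexInvariance}}, \textup{\ref{Axiom:PostfixAssociativity}}, \textup{\ref{Axiom:OrdinalAssociativity}}) to verify that $\tau_\Sigma$ makes $(\R,+)$ a $T_1$ topological group, and then by invoking \textup{Axiom \ref{Axiom:Distributive}} together with the Archimedean order structure of $\R$ to force $\tau_\Sigma$ to contain every standard open interval around $0$. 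This final topological step, where the distributivity axiom and the field structure of $\R$ intervene crucially, is the delicate heart of the proof; the remaining axioms by themselves permit various pathological summable families on $(\R,+)$ without immediately producing an internal inconsistency, so the multiplicative rigidity supplied by \textup{Axiom \ref{Axiom:Distributive}} looks essential.
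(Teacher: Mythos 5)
Your first paragraph is sound and in fact parallels the paper's own reduction: the paper also fixes $\underline{a}\in\dom(\Sigma)\setminus\dom(\Sigma_{\rm max})$, reduces to an all-positive family via subfamilies and negation functoriality, disposes of uncountable index sets by a pigeonhole argument, and arrives (via insertive associativity, i.e.\ grouping into finite blocks) at a summable $\N$-indexed family whose terms are bounded away from $0$. The problem is your finish. The step you flag as the ``delicate heart'' --- showing that $U=(-\tfrac12,\tfrac12)$ is a $\tau_\Sigma$-neighborhood of $0$ --- is not merely unproven; it is \emph{false} under the contradiction hypothesis, so no amount of work with the functoriality and ordinal axioms can rescue it. Indeed, with $(c_k)_{k\in\N}$ summable, $c_k\geq 1$, and sum $t$, additive extension closure makes $(-t,c_0,c_1,\ldots)$ summable with sum $0$; its initial segments are summable by Axiom \ref{Axiom:SubsSummable}, so by Proposition \ref{Prop:LimitsPartialSums} its partial-sum sequence $(-t,-t+c_0,-t+c_0+c_1,\ldots)$, which diverges to $+\infty$, has $\Sigma$-limit $0$. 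By the description of $\tau_\Sigma$ following Definition \ref{Def:SigmaTopology}, every $\tau_\Sigma$-open set containing $0$ must contain a tail of this sequence and hence is unbounded above. So $\tau_{\rm standard}\not\subseteq\tau_\Sigma$, your two paragraphs about $U$ are mutually consistent rather than contradictory, and the argument is circular: establishing $\tau_{\rm standard}\subseteq\tau_\Sigma$ would require already knowing that no pathological summable family exists.

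The missing idea is the paper's multiplicative trick, which you correctly sense must involve Axiom \ref{Axiom:Distributive} but deploy in the wrong place. From your family $(c_k)$ with $c_k\geq 1$: since the partial sums diverge, use insertive associativity once more to group consecutive terms into a summable family $\underline{b}=(b_n)_{n\in\N}$ with $2b_n<b_{n+1}$ (each $b_n>0$). Then $(1/b_n)_{n\in\N}$ is absolutely convergent by the ratio test, hence lies in $\Sigma_{\rm max}\subseteq\dom(\Sigma)$. Infinite distributivity now makes $(b_i/b_j)_{(i,j)\in\N\times\N}$ summable; its diagonal is the constant family $(1)_{i\in\N}$, which is summable by Axioms \ref{Axiom:SubsSummable} and \ref{Axiom:ReindexInvariance}, and this contradicts Theorem \ref{Thm:WeakSwindle}. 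Grafting this onto the end of your first paragraph yields a complete proof; the entire topological detour through $\mathscr{F}_\Sigma$ and $\tau_\Sigma$ should be discarded.
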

\begin{proof}
Suppose, by way of contradiction, that $\Sigma$ is such a proper extension.  Fix
\[
\underline{a}\in \dom(\Sigma)\setminus\dom(\Sigma_{\rm max}).
\]
After passing to a subfamily if necessary, we may also assume that the entries of $\underline{a}$ are all positive or all negative.  From negation functoriality, we reduce to the situation that they are all positive.

There are two main cases to consider.  First, suppose that $\underline{a}$ is indexed by an uncountable family.  Since $\Q_{>0}$ is countable, and since each entry in $\underline{a}$ is bounded below by some element of $\Q_{>0}$, there is some $q\in \Q_{>0}$ such that there are uncountable many entries in $\underline{a}$ all bigger than $q$.  Hence, there is a countable subfamily of $\underline{a}$ with a positive lower bound on its members, and hence not in $\dom(\Sigma_{\rm max})$.

Thus, we are reduced to considering the case when $\underline{a}$ is indexed by $\N$.  As $\underline{a}$ is not absolutely summable, its partial sums diverge to infinity.  Thus, insertive associativity allows us to insert parentheses into $\underline{a}$, around finite sums, and get a new summable family $\underline{b}=(b_i)_{i\in \N}$, with $2b_n<b_{n+1}$ for each $n\in \N$.  (Each summand is still positive.)  The family $\underline{c}:=(1/b_i)_{i\in \N}$ is absolutely convergent, by the ratio test.  By distributivity, the new family $(b_i/b_j)_{(i,j)\in \N\times \N}$ is summable.  This family has infinitely many instances of $1$, contradicting Theorem \ref{Thm:WeakSwindle}.
\end{proof}

Thinking of the values of the Riemann $\zeta$-function as infinite sums in a summation system, it may be interesting to explore which axioms hold, and whether there is some maximality property present.

\section{Endomorphisms and rings}\label{Section:EndosReorderable}

As in the previous section, throughout this section $X$ will be a set with a summation system $\Sigma$ and with a multiplication.

Besides series summation on $\R$, another system involving multiplication is endomorphism summation, as in Example \ref{Example:Endos}.  In unpublished work, Corner \cite{Corner} generalized such summability from a topological framework to a categorical context.  This paper partly grew out of a desire to generalize these ideas further.  Corner listed many standard properties of such sums, but one indispensable feature, appearing as \cite[Lemma 2.2]{Corner}, distinguishes endomorphism summation from most other summation systems we have encountered.  That key feature is the following axiom.

\begin{axiom}[\bf Left multiple summable\rm ]\label{Axiom:LeftMultSum}
If $(a_i)_{i\in I}\in X^I$ is summable, then $(r_ia_i)_{i\in I}$ is summable, for any choice of elements $r_i\in X$.
\end{axiom}

Assuming $X$ is a ring (and identifying $X$ with its $\{0\}$-indexed families), then this axiom implies that $\dom(\Sigma)\cap X$ is a left absorptive subset of $X$.  Moreover, Axiom \ref{Axiom:LeftMultSum} can be viewed as a form of left ideal functoriality (at least, in the presence of the other functoriality axioms).

All previous axioms have been left-right symmetric, so it may feel unnatural to assume left multiples are summable without also assuming right multiples are summable.  However, as the following example demonstrates, this is the norm.  In other words, noncommutativity is inherent in endomorphism ring multiplication, except in special cases.

\begin{example}
Let $V$ be an infinite-dimensional right vector space.  Let $(v_i)_{i\in \N}$ be a family of linearly independent vectors, and let $W$ be a complement to its span.  For each pair of numbers $i,j\in \N$, let $e_{i,j}$ be the unique linear map that sends $v_j$ to $v_i$, but annihilates ${\rm Span}(\{v_k\}_{k\in \N\setminus \{j\}})+W$.  The family $(e_{i,i})_{i\in \N}$ is summable, but $(e_{i,i} e_{i,0})_{i\in \N}$ is not summable since each member is nonzero on $v_0$; right multiple summability fails.
\end{example}

In Axiom \ref{Axiom:SubsSummable} we are not told the exact values for sums over subfamilies.  However, insertive associativity (as well as its monoid merger version) does place some important limits on how subfamilies sum. Likewise, in the statement of Axiom \ref{Axiom:LeftMultSum} we are not told the exact values for sums over left multiple families.  Yet, the following reordering axiom (similar in spirit to \cite[Lemma 2.3]{Corner}) places limitations on how left multiples can sum.  To simplify notation in the following axiom (and elsewhere), given a map $\psi\colon K\to \power(I)$, define the \emph{dual} map $\psi'\colon I\to \power(K)$ by the rule $i\mapsto \{k\in K\, :\, i\in \psi(k)\}$.

\begin{axiom}[\bf Left reorderability\rm ]\label{Axiom:LeftReorder}
If $(a_i)_{i\in I}\in X^I$ is summable, if $(r_k)_{k\in K}\in X^K$ is an arbitrary family, and if $\psi\colon K\to \power(I)$ is a map such that $(r_k)_{k\in \psi'(i)}$ is summable for each $i\in I$, then
\begin{equation}\label{Eq:MainLeftReorder}
\Sigma\left(\left[\Sigma(r_k)_{k\in \psi'(i)}\right]a_i\right)_{i\in I}= \Sigma\left(r_k\left[\Sigma(a_i)_{i\in \psi(k)}\right]\right)_{k\in K}
\end{equation}
where all sums are defined.
\end{axiom}

Aside from the underlying assumption of summability of the family $(a_i)_{i\in I}$, there are four types of summations that occur in \eqref{Eq:MainLeftReorder}.  The first type is the inner sum on the left side; for each $i\in I$ this is the sum $\Sigma(r_k)_{k\in \psi'(i)}$.  It is the main hypothesis required on the function $\psi$ in Axiom \ref{Axiom:LeftReorder} that such sums are defined.  The other three types of summations in \eqref{Eq:MainLeftReorder} are the outer summation on the left side and both the inner and outer sums on the right side.  The fact that these three other types of summations are actually defined is an explicit conclusion of the axiom.

Quite surprisingly, this single axiom---about rearranging terms in summations---captures all other numbered axioms, under only minor hypotheses.  To prove this fact, we first introduce some simplifying notation.  Given $n\in \N$, if
\[
I=\{i\in \N\, :\, i<n\}=\N_{<n},
\]
then we will write the family $(x_i)_{i\in I}$ as $(x_0,x_1,\ldots, x_{n-1})$, leaving the index set as implicitly defined by the notation.

\begin{thm}\label{Thm:MainBigLeftReord}
Let $(X,\cdot, 1)$ be a multiplicative monoid with a summation system $\Sigma$ where
\begin{itemize}
\item $\Sigma$ satisfies left reorderability,
\item $\Sigma$ is surjective,
\item every singleton family whose sole member is $1$ sums to $1$, and
\item there is an element $-1\in X$ such that $\Sigma(1,-1)=\Sigma()$,
\end{itemize}
then $\Sigma$ satisfies all previously numbered axioms, with $+$ the induced addition.  Moreover, the set $X$ is a ring under the induced addition and the given multiplication.
\end{thm}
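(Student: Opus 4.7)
The strategy is to derive each numbered axiom by instantiating left reorderability (hereafter LR) with carefully chosen triples $(K,(r_k)_{k\in K},\psi)$. Several axioms come essentially for free. Applying LR with $K=\{*\}$, $r_*=1$, $\psi(*)=I$ to any summable $\underline{a}$ yields $\Sigma(\Sigma\underline{a})=\Sigma\underline{a}$; combined with surjectivity and the hypothesis $\Sigma(1)=1$, this gives Axiom \ref{Axiom:Singletons}. Choosing $\psi$ single-valued encoding a bijection delivers Axiom \ref{Axiom:ReindexInvariance}, and taking $r_k=1$ with each $i\in I$ in exactly one $\psi(k)$ yields monoid merger (Axiom \ref{Axiom:PreAbelian}), hence insertive associativity (Axiom \ref{Axiom:InsertAssociativity}); together with the empty-sum existence available from $\Sigma(1,-1)=\Sigma()$, one also gets Axiom \ref{Axiom:SubsSummable}. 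Finally, LR with $\psi(k)=\{k\}$ immediately yields Axiom \ref{Axiom:LeftMultSum}.

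The crux is to show $u:=(-1)^2=1$. Set $0:=\Sigma()$ and $-a:=a(-1)$. Applying LR to the singleton $(a)$ with $r=-1$ and $\psi=\emptyset$ gives $0\cdot a=-1\cdot 0$, independent of $a$, so $0\cdot a=0$; the mirror argument yields $a\cdot 0=0$. Short LR applications then establish $\Sigma(a,-a)=\Sigma(-a,a)=0$, $\Sigma(0,a)=\Sigma(a,0)=a$, $\Sigma(-1,u)=0$, and existence of $\Sigma(b,-a)=\Sigma(-a,b)$ for all $a,b\in X$. The key move is to apply LR to $(1,-1)$ with $K=\{0,1,2\}$, $(r_0,r_1,r_2)=(1,-1,-1)$, and $\psi(0)=\psi(1)=\{0\}$, $\psi(2)=\{1\}$: the summability hypotheses reduce to those of $(1,-1)$ and $(-1)$, the left side simplifies to $\Sigma(0,u)=u$, and the right side to $\Sigma(1,-1,u)$. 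So $(1,-1,u)$ is summable with sum $u$. Insertive associativity with partition $\{\{0\},\{1,2\}\}$ then gives $\Sigma(1,-1,u)=\Sigma(1,\Sigma(-1,u))=\Sigma(1,0)=1$, forcing $u=1$.

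Once $u=1$, the map $a\mapsto -a$ is an involution of $X$; combined with the earlier existence of $\Sigma(b,-a)$ for all $a,b$, this yields 2-totality. Finite totality (Axiom \ref{Axiom:Totality}) follows by induction on $n$: given $(a_0,\ldots,a_n)\in X^{n+1}$, apply LR to $(1,-1)$ with $K=\{0,\ldots,n\}$, $r_k=a_k$ for $k<n$, $r_n=-a_n$, $\psi(k)=\{0\}$ for $k<n$, and $\psi(n)=\{1\}$; the hypotheses hold by induction, and the right side evaluates to $\Sigma(a_0,\ldots,a_n)$. Left and right distributivity come from LR on a summable pair with scalar $r$, and infinite distributivity (Axiom \ref{Axiom:Distributive}) from LR on a summable $\underline{b}$ with $K=I\times J$, $r_{(i,j)}=a_i$, $\psi(i,j)=\{j\}$. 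The same style of argument --- applying LR to $(1,-1)$ with $K=I\cup J$, $r_k=a_k$ for $k\in I$, $r_k=-b_k$ for $k\in J$, $\psi(k)=\{0\}$ on $I$, $\psi(k)=\{1\}$ on $J$ --- produces $\underline{a}\oplus\underline{b}$ summable with sum $\Sigma\underline{a}+\Sigma\underline{b}$ (using $u=1$), delivering both Axiom \ref{Axiom:AdditiveExtClosure} (as the special case $|J|=1$) and Axiom \ref{Axiom:FunctorialAdd}. Axiom \ref{Axiom:EmptyAddsNada} follows from Axiom \ref{Axiom:SubsSummable} and additive extension closure with $x=0$.

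To close, commutativity of the induced $+$ comes from reindexing invariance on pairs, associativity from insertive associativity on triples (all summable by finite totality), $0$ is identity, and $-a$ is inverse; so $(X,+,0)$ is an abelian group. Uniqueness of additive inverses then forces $a(-1)=(-1)a$, so $-1$ is central, and combined with infinite distributivity this delivers Axiom \ref{Axiom:FunctorialNeg}. With distributivity, $(X,+,\cdot)$ is a ring. The remaining ordinal-indexed axioms (\ref{Axiom:OrdinalReindexInvariance}, \ref{Axiom:InitialSegments}, \ref{Axiom:PostfixAssociativity}, \ref{Axiom:OrdinalAssociativity}) and prefix associativity (Axiom \ref{Axiom:WeakAssoc1}) are direct specializations of general axioms already established. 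The main obstacle throughout is the identification $u=1$ in paragraph two; once that is in hand, all remaining derivations are short LR applications.
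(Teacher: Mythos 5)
Your proposal is correct and follows essentially the same route as the paper: derive monoid merger (and its standard consequences) from left reorderability with unit multipliers, make the crux the identity $(-1)\cdot(-1)=1$ via the computation $\Sigma(1,-1,(-1)^2)=(-1)^2$ (your finish via insertive associativity and $\Sigma(-1,(-1)^2)=0$ replaces the paper's second reorderability instantiation, and your base family $(1,-1)$ with negated multipliers replaces the paper's $(1,1)$, but these are cosmetic), and then obtain the remaining axioms by targeted instantiations. The one quibble is that justifying Axiom \ref{Axiom:EmptyAddsNada} from Axiom \ref{Axiom:SubsSummable} and additive extension closure only handles finitely many adjoined zeros; the infinite case needs monoid merger directly, exactly as in the paper's Theorem \ref{Thm:FiniteExtensions}, which you have already established.
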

\begin{proof}
We first show that the monoid merger axiom holds.  Let $\underline{a}\in X^I$ be a summable family, let $K$ be a set, and let $\psi\colon K\to \power(I)$ be a map where each element of $I$ belongs to the image of exactly one element of $K$.  For each $k\in K$ take $r_k=1$.  To apply left reorderability, we need to know that $(1)_{k\in \psi'(i)}$ is summable for each $i\in I$.  From the defining condition on $\psi$, we know that $\psi'(i)$ is always a singleton, and so the family is summable by the third bulleted hypothesis, and it sums to $1$.  (All later uses of left reorderability will, likewise, be made only when its hypotheses are met.  Checking this will be left to the reader.)  Thus, \eqref{Eq:MainLeftReorder} becomes
\[
\Sigma\underline{a}=\Sigma\left(1\Sigma(a_i)_{i\in \psi(k)}\right)_{k\in K}.
\]
On the right side, after absorbing $1$ this is exactly the needed equality to show Axiom \ref{Axiom:PreAbelian}.

By Theorem \ref{Thm:FiniteExtensions}, and using the second bulleted hypothesis, we get reindexing invariance, subfamilies of summable families are summable, $0:=\Sigma()$ exists and can be adjoined or removed from families freely, singletons sum simply, and insertive associativity hold.  As usual, write $+$ for the induced addition (which we do not yet know is total).

Our next goal is to show that $(1,1)$ is a summable family.  Taking $\underline{a}=\underline{r}=(1,-1)$, so $I=K=\{0,1\}$, and taking $\psi\colon K\to \power(I)$ to be the map where $k\mapsto \{k\}$ for each $k\in K$, left reorderability yields that $(1,-1\cdot -1)$ is summable.  (The fact that singletons sum simply is used to check the hypothesis of left reorderability.)  Therefore, it suffices to prove that $-1\cdot -1=1$.

Towards that end, let $x\in X$ be arbitrary.  Take $\underline{a}=(x)$, take $\underline{r}=()$, and take $\psi$ to be the empty map.  Left reorderability gives $0x=0$.  On the other hand, taking $\underline{a}=(x)$, taking $\underline{r}=(1,-1)$, and taking $\psi$ to be the map where $0,1\mapsto \{0\}$, then left reorderability gives $0x=x+-1x$.  Thus, every element $x\in X$ has an additive inverse, namely $-1x$.  In particular, $-1\cdot -1$ is an additive inverse of $-1$, as is $1$.  This does not yet prove that $-1\cdot -1=1$, since the usual proof of the uniqueness of additive inverses uses totality and associativity of addition.  However, we can instantiate the steps in such a proof, as follows.

Take $\underline{a}=(1,-1)$, take $\underline{r}=(1,-1,-1)$, and define $\psi\colon K\to \power(I)$ by the rule that $0,1\mapsto \{0\}$ and $2\mapsto \{1\}$.  We find
\[
-1\cdot -1=0+-1\cdot -1=[1+-1]1+-1\cdot -1=\Sigma(1\cdot 1,-1\cdot 1,-1\cdot -1)=\Sigma(1,-1,-1\cdot -1),
\]
where the second equality uses the fourth bulleted hypothesis, and the third equality is from applying left reorderability to the given families and map.  Similarly, taking $\underline{a}=(1,-1)$ and $\underline{r}=(1,1,-1)$, and defining $\psi\colon K\to \power(I)$ by the rule that $0\mapsto \{0\}$ and $1,2\mapsto \{1\}$, then
\[
1=1+0=1\cdot 1+[1+-1]\cdot -1=\Sigma(1\cdot 1,1\cdot -1,-1\cdot -1)=\Sigma(1,-1,-1\cdot -1).
\]
Putting the previous two displayed equations together finishes the proof that $(1,1)$ is summable.

Next, we show additive extension closure, Axiom \ref{Axiom:AdditiveExtClosure}.  Let $\underline{b}\in X^J$ be summable, let $x\in X$, and let $\ell$ be an index not in $J$.  Take $\underline{r}:=\underline{b}\#_{\ell}x$ (with the hash as defined in the proof of Theorem \ref{Thm:FiniteExtensions}), so that $K=J\cup\{\ell\}$.  Also take $\underline{a}=(1,1)$.  Define $\psi\colon K\to \power(I)$ by the rule $\ell\mapsto \{1\}$ and $j\mapsto \{0\}$ for each $j\in J$.  Applying left reorderability with these choices, then \eqref{Eq:MainLeftReorder} simplifies to the equality
\[
\Sigma\underline{b}+x=\Sigma [\underline{b}\#_{\ell}x].
\]
Thus, the domain of $\Sigma$ is closed under singleton extensions, and hence finite extensions.  In particular, the induced addition is total.  Moreover, by insertive associativity on triples (and some reindexing), the induced addition is associative.  It is commutative by reindexing invariance.  We already noted that $\Sigma()$ is an additive identity and that every element $x\in X$ has an additive inverse $-1x$.  Two more easy applications of left reorderability show that multiplication left and right distributes over the induced addition.  The last sentence stated in this theorem now follows.  Prefix associativity also follows, as do the axioms of Section \ref{Section:TopologyRecovered}.

We have also shown finite-totality.  Note that $I$-totality will not hold whenever $I$ is infinite, by Theorem \ref{Thm:WeakSwindle}, as long as $X$ has more than one element.

We verify addition functoriality by showing the equivalent condition in Proposition \ref{Prop:AddFunct}.  Let $\underline{b}\in X^{J_1}$ and $\underline{c}\in X^{J_2}$ be summable families with disjoint index sets, and take $\underline{r}=\underline{b}+\underline{c}$.  Also take $\underline{a}=(1,1)$.  Define $\psi\colon K\to \power(I)$ by the rule
\[
k\mapsto \begin{cases}
\{0\} & \text{ if $k\in J_1$,}\\
\{1\} & \text{ otherwise.}
\end{cases}
\]
Applying left reorderability, the outer sum on the right side of \eqref{Eq:MainLeftReorder} is over the family $\underline{r}$, showing the needed summability condition.

Next we verify negation functoriality.  Take $\underline{r}=(-1)$.  Also take $\underline{a}\in X^I$ to be an arbitrary summable family, for some index set $I$.  Define $\psi\colon K\to \power(I)$ by the rule $0\mapsto I$.  Applying left reorderability, then \eqref{Eq:MainLeftReorder} holds and simplifies to the needed equality.

For left multiple summability, let $\underline{a}\in X^I$ again be an arbitrary summable family, for some index set $I$.   Take $K=I$, and let $\underline{r}\in X^K=X^I$ be arbitrary.  Define $\psi\colon K\to \power(I)$ by the rule that $k\mapsto \{k\}$.  Now, \eqref{Eq:MainLeftReorder} holds, and the left side is exactly the sum over the family $(r_ia_i)_{i\in I}$.

Finally, we verify infinite distributivity.  Let $\underline{a}\in X^I$ and $\underline{b}\in X^J$ be summable families.  Take $K=I\times J$ and take $r_{(i,j)}=b_j$ for each $(i,j)\in I\times J$.  Define $\psi\colon K\to \power(I)$ by the rule $(i,j)\mapsto \{i\}$.  Applying left reorderability, then \eqref{Eq:MainLeftReorder} holds and simplifies to
\[
\Sigma\left(\left[\Sigma \underline{b}\right]a_i\right)_{i\in I}=\Sigma(b_j a_i)_{(i,j)\in I\times J}.
\]
Another use of left reorderability, this time taking $\underline{r}=(\Sigma\underline{b})$ with $K=\{0\}$, and defining $\psi\colon K\to \power(I)$ by the rule $0\mapsto I$, shows that
\[
\Sigma\left(\left[\Sigma \underline{b}\right]a_i\right)_{i\in I}=\Sigma([\Sigma\underline{b}][\Sigma\underline{a}])_{k\in K}=[\Sigma\underline{b}][\Sigma\underline{a}].
\]
Putting the previous two displayed equations together gives us the needed equality.
\end{proof}

The hypotheses in Theorem \ref{Thm:MainBigLeftReord} are all satisfied by the endomorphism summation system of Example \ref{Example:Endos}.  The left reorderability axiom captures a lot of the inherent algebraic structure in such systems.  Thus, we make the following definition, generalizing Example \ref{Example:Endos}.

\begin{definition}
Any set $X$ (with a summation system $\Sigma$ and monoid structure) satisfying all the hypotheses of Theorem \ref{Thm:MainBigLeftReord} is a (left) \emph{reorderable ring}.
\end{definition}

Right reorderable rings may be defined symmetrically, but we will never have need to make use of them.  Thus, we will drop the left-right adjectives completely, leaving it implicitly understood that we always work on the left.

There is another way to view the reordering axiom.  In the presence of distributivity (and when singletons sum simply), the two sides of \eqref{Eq:MainLeftReorder} simplify to
\[
\Sigma\left(\Sigma(r_ka_i)_{k\in \psi'(i)}\right)_{i\in I} = \Sigma\left(\Sigma (r_ka_i)_{i\in \psi(k)}\right)_{k\in K},
\]
expressing the fact that the order of summation may be interchanged.  These double sums can also be written as a single sum, as follows.

\begin{cor}\label{Cor:ReordCor}
Let $X$ be a reorderable ring.  Let $\underline{a}\in X^I$ be a summable family, let $\underline{r}\in X^K$ be arbitrary, and let $\psi\colon K\to \power(I)$ be a map such that $(r_k)_{k\in \psi'(i)}$ is summable for each $i\in I$.  Taking $L:=\{(j,k)\in I\times K\, :\, j\in \psi(k)\}$, then
\[
\Sigma\left(\Sigma (r_k)_{k\in \psi'(i)}a_i \right)_{i\in I} = \Sigma (r_ka_j)_{(j,k)\in L}.
\]
\end{cor}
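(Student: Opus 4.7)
The plan is to derive the identity from a single clever application of Axiom \ref{Axiom:LeftReorder}, rather than combining reorderability with scalar distributivity and flattening via insertive associativity. Because $X$ is a reorderable ring, Theorem \ref{Thm:MainBigLeftReord} supplies all previously numbered axioms; in particular, I will freely use reindexing invariance and the fact that singletons sum simply.

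First, I would use $L$ itself as the new ``$K$-set''. Concretely, define $\underline{r}^{\ast} \in X^{L}$ by $r^{\ast}_{(j,k)} := r_k$, and $\psi^{\ast}\colon L \to \power(I)$ by $\psi^{\ast}(j,k) := \{j\}$. To invoke Axiom \ref{Axiom:LeftReorder} on the triple $(\underline{a}, \underline{r}^{\ast}, \psi^{\ast})$, I must check that $(r^{\ast}_{(j,k)})_{(j,k) \in \psi^{\ast\prime}(i)}$ is summable for each $i \in I$. Unwinding definitions,
\[
\psi^{\ast\prime}(i) = \{(j,k) \in L : i \in \psi^{\ast}(j,k)\} = \{(i,k) : k \in \psi'(i)\},
\]
so the family in question is a reindexing of $(r_k)_{k \in \psi'(i)}$ via the projection $(i,k) \mapsto k$. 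By reindexing invariance and the corollary's hypothesis on $\psi$, this reindexed family is summable with the same sum.

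Applying Axiom \ref{Axiom:LeftReorder} now yields
\[
\Sigma\Bigl(\bigl[\Sigma(r^{\ast}_{(j,k)})_{(j,k) \in \psi^{\ast\prime}(i)}\bigr]\, a_i\Bigr)_{i \in I} = \Sigma\Bigl(r^{\ast}_{(j,k)} \bigl[\Sigma(a_i)_{i \in \psi^{\ast}(j,k)}\bigr]\Bigr)_{(j,k) \in L},
\]
with every sum on both sides defined. On the left, reindexing collapses the inner sum to $\Sigma(r_k)_{k \in \psi'(i)}$, recovering the left-hand side of the corollary. On the right, since singletons sum simply, $\Sigma(a_i)_{i \in \{j\}} = a_j$, and substituting $r^{\ast}_{(j,k)} = r_k$ gives $\Sigma(r_k a_j)_{(j,k) \in L}$, the right-hand side of the corollary.

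The only real subtlety is bookkeeping: one must see that the hypothesis ``$(r_k)_{k \in \psi'(i)}$ is summable for each $i$'' is precisely what verifies the one nontrivial hypothesis of Axiom \ref{Axiom:LeftReorder} for the triple $(\underline{a}, \underline{r}^{\ast}, \psi^{\ast})$, and that the choice $\psi^{\ast}(j,k) = \{j\}$ is exactly what converts the outer sum on the right of \eqref{Eq:MainLeftReorder} into the desired flat sum indexed by $L$. I expect no genuine obstacle beyond this reindexing accounting.
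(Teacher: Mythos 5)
Your proof is correct and is essentially identical to the paper's: the paper also makes a single application of Axiom \ref{Axiom:LeftReorder} to the family $(r^{\ast}_{(j,k)})_{(j,k)\in L}$ (called $\underline{s}$ there) with the map $(j,k)\mapsto\{j\}$ (called $\rho$ there), using reindexing invariance to identify the inner sums and the fact that singletons sum simply to collapse the right-hand side. No differences worth noting.
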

\begin{proof}
Define a map $\rho\colon L\to \power(I)$ by the rule $(j,k)\mapsto \{j\}$, and let $\rho'\colon I\to \power(L)$ be the dual map.  Thus, for each $i\in I$ we find
\begin{align*}
\rho'(i) & =  \{(j,k)\in L\, :\, i\in \rho((j,k))\} = \{(j,k)\in L\, :\, i\in \{j\}\} = \{i\}\times \psi'(i).
\end{align*}
Consider the family $\underline{s}\in X^L$ where $s_{(j,k)}=r_k$ for each $(j,k)\in L$.  For each $i\in I$, we have
\[
(s_{(j,k)})_{(j,k)\in \rho'(i)}=(r_k)_{(j,k)\in \{i\}\times\psi'(i)}.
\]
This is a reindexing of the family $(r_k)_{k\in \psi'(i)}$, which is summable by hypothesis.  Thus,
\begin{align*}
\Sigma\left(\Sigma (r_k)_{k\in \psi'(i)}a_i \right)_{i\in I} & =\Sigma\left(\Sigma(s_{(j,k)})_{(j,k)\in \rho'(i)}a_i\right)_{i\in I}\\ &= \Sigma\left(s_{(j,k)}\Sigma (a_i)_{i\in \rho((j,k))}\right)_{(j,k)\in L} = \Sigma (r_ka_j)_{(j,k)\in L},
\end{align*}
where the second equality uses Axiom \ref{Axiom:LeftReorder}, and the third equality uses the fact that singletons sum simply.
\end{proof}

Before ending this section, we mention two more facts about $\Sigma$ when it is the summation system for endomorphism rings.  Both results concern reconstructing information in $\Sigma$ from special subsets.  First, it turns out that we can reconstruct $\dom(\Sigma)$ from $\dom(\Sigma)\cap X^{\N}$.

\begin{prop}
For endomorphism summation, a family is summable if and only if all countable subfamilies are summable.
\end{prop}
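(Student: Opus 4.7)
The plan is to proceed by a direct contrapositive argument in the nontrivial direction, using the pointwise definition of summability for endomorphism families.

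For the forward direction I would simply invoke the fact, recorded near Axiom \ref{Axiom:SubsSummable}, that endomorphism summation satisfies the ``subfamilies remain summable'' property. Consequently, if $(a_i)_{i\in I}$ is summable, then every subfamily whatsoever is summable, and in particular every countable subfamily is. So there is nothing to do here beyond citing the earlier observation.

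For the backward direction, suppose, contrapositively, that a family $\underline{a}=(a_i)_{i\in I}\in X^I$ is not summable. By the defining condition of endomorphism summation (Example \ref{Example:Endos}), non-summability means there exists some $m\in M$ such that the set
\[
J_m:=\{i\in I\, :\, a_i(m)\neq 0\}
\]
is infinite. Choose any countably infinite subset $I'\subseteq J_m$. Then the countable subfamily $(a_i)_{i\in I'}$ has every one of its members nonzero on $m$, so infinitely many members of this subfamily are nonzero on $m$. By the same defining condition, this countable subfamily fails to be summable, which contradicts the hypothesis that all countable subfamilies of $\underline{a}$ are summable.

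There is no real obstacle to overcome: the argument works because the obstruction to summability in $\End(M_R)$ is inherently a countable local phenomenon, witnessed by a single element $m\in M$ together with a countably infinite set of ``offending'' indices. Once the witness $m$ is extracted, any countable selection of indices from $J_m$ furnishes the contradicting subfamily, so the main content of the statement is really this local-to-global reduction together with the countable choice of the witness indices.
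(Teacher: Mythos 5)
Your proposal is correct and follows essentially the same route as the paper's own proof: the forward direction cites Axiom \ref{Axiom:SubsSummable} for endomorphism summation, and the backward direction argues contrapositively by extracting a witness $m\in M$ with infinitely many indices $i$ satisfying $a_i(m)\neq 0$ and restricting to a countably infinite subset of those indices. Nothing is missing.
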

\begin{proof}
We already know that subfamilies of summable families are summable, so it suffices to prove the backwards direction.  We will do so contrapositively.  Let $(a_i)_{i\in I}$ be a non-summable family of endomorphisms on a module $M$.  There must then exist some $m\in M$ such that there is an infinite subset $J\subseteq I$ with $a_j(m)\neq 0$ for each $j\in J$.  Restricting $J$ to a countable subset of itself, then $(a_{j})_{j\in J}$ is not summable.
\end{proof}

The previous proposition is about $\dom(\Sigma)$, but does not mention the actual sums in $\im(\Sigma)$.  It seems like a difficult, if not impossible, problem to reconstruct sums just from knowing which families are summable.  Miraculously, we can reconstruct countable sums (in endomorphism rings, and more generally) from finite sums and the set $\dom(\Sigma)\cap X^{\N}$.

\begin{prop}[{cf.\ \cite[Lemma 2.7]{Corner}}]\label{Prop:CountIm}
If $X$ is a reorderable ring, then a family $\underline{a}\in X^{\N}$ is summable with sum $t$ if and only if $(t-(a_0+\cdots + a_{k-1}))_{k\in \N}$ is summable.
\end{prop}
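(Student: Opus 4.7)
Write $s_k := a_0 + \cdots + a_{k-1}$ (with $s_0 = 0$), so the family in question is $\underline{b} := (t - s_k)_{k \in \N}$; I will handle the two implications separately. A preliminary observation, used below: in a reorderable ring, Axiom \ref{Axiom:InsertAssociativity} applied to the two-block partition $\{\{0,\ldots,k-1\},\{k,k+1,\ldots\}\}$ of $\N$ gives $t = s_k + \Sigma(a_i)_{i \geq k}$ whenever $\underline{a}$ is summable with sum $t$, and thus $t - s_k = \Sigma(a_i)_{i \geq k}$.

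For the forward direction, the plan is to invoke Axiom \ref{Axiom:LeftReorder} with the trivial scalar family $r_k := 1$ for all $k \in \N$ and the map $\psi(k) := \{i \in \N : i \geq k\}$. The dual map then satisfies $\psi'(i) = \{0, 1, \ldots, i\}$, a finite set, so the hypothesis that $(r_k)_{k \in \psi'(i)}$ be summable for each $i$ reduces to summability of a finite constant family of $1$'s and is automatic by finite-totality (which holds in any reorderable ring, by Theorem \ref{Thm:MainBigLeftReord}). The axiom then asserts that \emph{all} sums in its conclusion are defined, and the outer sum on the right side of \eqref{Eq:MainLeftReorder} collapses, using the preliminary observation, to $\Sigma(b_k)_{k \in \N}$; hence $\underline{b}$ is summable.

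For the backward direction, observe that $a_k = b_k - b_{k+1}$, since $(t - s_k) - (t - s_{k+1}) = s_{k+1} - s_k = a_k$. Assuming $\underline{b}$ is summable, Axioms \ref{Axiom:SubsSummable} and \ref{Axiom:ReindexInvariance} yield summability of the left shift $(b_{k+1})_{k \in \N}$; Axiom \ref{Axiom:FunctorialNeg} yields summability of its negation; and Axiom \ref{Axiom:FunctorialAdd}, applied componentwise, assembles the difference into a summable family, which is precisely $\underline{a}$, with $\Sigma\underline{a} = \Sigma(b_k)_{k \in \N} - \Sigma(b_{k+1})_{k \in \N}$. A single application of prefix associativity (Axiom \ref{Axiom:WeakAssoc1}) telescopes this to $b_0 = t$, as required.

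The crux is the forward direction: one must produce summability of a family whose entries are themselves infinite tail sums, and no finite-level functoriality or additive extension argument gives this for free. The right way to see it is to recognize $\underline{b}$ as the row-sum family of a lower-triangular double array indexed by $\{(i,k) \in \N^2 : i \geq k\}$ with entries $a_i$; the columns are finite (hence summable) and the total array is controlled by $\underline{a}$, while left reorderability --- with its built-in existence clause for every sum appearing in \eqref{Eq:MainLeftReorder} --- is precisely the tool that converts column summability into the required row summability.
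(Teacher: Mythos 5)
Your proof is correct, and the forward direction is essentially identical to the paper's: both invoke left reorderability (Axiom \ref{Axiom:LeftReorder}) with $r_k=1$ and $\psi(k)=\N_{\geq k}$, so that $\psi'(i)=\N_{\leq i}$ is finite, and then read off summability of $(t-s_k)_{k\in\N}$ from the existence clause for the right side of \eqref{Eq:MainLeftReorder} after identifying the inner tail sums $\Sigma(a_i)_{i\geq k}$ with $t-s_k$ (you do this via insertive associativity on a two-block partition; the paper iterates prefix associativity --- an immaterial difference). The backward direction diverges at the last step. Both arguments obtain summability of $\underline{a}$ as the componentwise difference $\underline{b}-\underline{b}'$ of $\underline{b}$ and its left shift, via addition and negation functoriality. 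But to pin down the \emph{value} of the sum, the paper sets $s:=\Sigma\underline{a}$, re-applies the forward direction to produce $(s-s_k)_{k\in\N}$, subtracts to get the constant family $(s-t)_{k\in\N}$ summable, and invokes the swindle (Theorem \ref{Thm:WeakSwindle}) to force $s=t$. You instead note that functoriality already gives $\Sigma\underline{a}=\Sigma\underline{b}-\Sigma\underline{b}'$, which prefix associativity telescopes to $b_0=t$ directly. Your route is shorter and avoids both the re-use of the forward implication and the appeal to Theorem \ref{Thm:WeakSwindle}; the paper's route has the mild virtue of illustrating the swindle in action but proves nothing more. Both are valid in a reorderable ring, where all the axioms you cite hold by Theorem \ref{Thm:MainBigLeftReord}.
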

\begin{proof}
$(\Rightarrow)$: Assume $\Sigma \underline{a}=t$.  Let $K=\N$, and for each $k\in K$ let $r_{k}=1$ (the multiplicative identity in $X$).  Define $\psi\colon \N\to \power(\N)$ by the rule $k\mapsto \N_{\geq k}$.  Thus, the dual $\psi'\colon \N\to \power(\N)$ is given by the rule $\psi'(i)=\N_{\leq i}$.  For each $i\in \N$, the family $(r_k)_{k\in \psi'(i)}$ is summable, being finite.  By Axiom \ref{Axiom:LeftReorder},
\begin{equation}\label{Eq:FirstReordering}
\Sigma([\Sigma(r_k)_{k\in \psi'(i)}]a_i)_{i\in \N}=\Sigma(1\Sigma(a_{\ell})_{{\ell}\in \psi(k)})_{k\in K}=\Sigma(\Sigma(a_{\ell})_{\ell\in \N_{\geq k}})_{k\in \N},
\end{equation}
and so all sums on the right side are defined.  Applying prefix associativity (without the shift reindexing of the tail) a total of $k$ times to $\underline{a}$, we find
\begin{equation}\label{Eq:SecondReordering}
t=(a_{0}+\cdots +a_{k-1}) + \Sigma(a_{\ell})_{\ell\in \N_{\geq k}}.
\end{equation}
Rearranging \eqref{Eq:SecondReordering} and plugging in to \eqref{Eq:FirstReordering} shows that the $(t-(a_0+\cdots + a_{k-1}))_{k\in \N}$ is summable.

$(\Leftarrow)$: Assume $\underline{b}:=(t-(a_0+\cdots + a_{k-1}))_{k\in \N}$ is summable.  By prefix associativity (this time \emph{with} the shift reindexing of the tail), we know that $\underline{b}':=(t-(a_0+\cdots + a_{k-1}+a_k))_{k\in \N}$ is also summable.  By addition and negation functoriality, $\underline{b}-\underline{b}'$ is summable.  But $\underline{b}-\underline{b}'=\underline{a}$.

Let $s:=\Sigma\underline{a}$.  By this proof's forward direction, $\underline{c}:=(s-(a_0+\cdots + a_{k-1}))_{k\in \N}$ is summable.  By another use of addition and negation functoriality, $\underline{c}-\underline{b}$ is summable.  This is just the constant family $(s-t)_{k\in \N}$.  By Theorem \ref{Thm:WeakSwindle} we get $s=t$.
\end{proof}

\section{Factor algebras and summation systems}\label{Section:FactorAlg}

In this closing section of the paper, we investigate how summation systems can pass to quotient structures.  Even if a summation system arose from a topology, this does not mean, \emph{a priori}, that the quotient structure arises from a quotient topology.  The algebraic generality of our constructions allows potentially new structures to emerge.

Let $(X,+)$ be an abelian group, and let $S\leq X$ be a (normal) subgroup.  Let $\overline{X}=X/S$ be the factor group.  Throughout this section, we will use bar notation to denote working in this group of cosets.

Working this way induces a natural quotient relation
\[
\overline{\Sigma} = \left\{\left((\overline{a_i})_{i\in I},\overline{x}\right)\in \bigcup_{I}\overline{X}^I\times \overline{X} \, :\, \text{$(a_i)_{i\in I}$ is $\Sigma$-summable, with sum $x$} \right\}.
\]
In order for $\overline{\Sigma}$ to be a summation system on $\overline{X}$, it must be the case that $\overline{\Sigma}$ is a function, not merely a relation.

To understand the contrary situation,  suppose for a moment that $\overline{\Sigma}$ is not a function.  We can then fix pairs
\[
((a_i)_{i\in I},x),((b_i)_{i\in I},y)\in \Sigma
\]
with $(\overline{a_i})_{i\in I}=(\overline{b_i})_{i\in I}$, but with $\overline{x}\neq \overline{y}$.  If $\Sigma$ satisfies addition and negation functoriality, then the two families $\underline{a}-\underline{a}$ and $\underline{a}-\underline{b}$ are both summable.  Moreover, modulo $S$ they are both the constant $\overline{0}$ family, but modulo $S$ their sums are $\overline{x}-\overline{x}=\overline{0}$ and $\overline{x}-\overline{y}\neq \overline{0}$, respectively.

Thus, when $\Sigma$ respects subtraction, a sufficient condition for $\overline{\Sigma}$ to be a function is that $S$ satisfies the following condition.

\begin{definition}
Let $X$ be a set with a summation system $\Sigma$.  A subset $S\subseteq X$ is \emph{$\Sigma$-closed} if whenever all members of a summable family belong to $S$, then the sum belongs to $S$.
\end{definition}

Unsurprisingly, this condition is also necessary.

\begin{prop}\label{Prop:SigmaClosedQuot}
Let $(X,+)$ be an abelian group with a summation system $\Sigma$ satisfying addition and negation functoriality. Given a subgroup $S\leq X$, then the quotient relation $\overline{\Sigma}$ is a summation system on $\overline{X}$ if and only if $S$ is $\Sigma$-closed.
\end{prop}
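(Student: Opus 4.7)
The plan is to recognize that, since a summation system is by Definition 2.2 nothing more than a partial function, the content of the proposition reduces entirely to showing that the relation $\overline{\Sigma}$ is single-valued if and only if $S$ is $\Sigma$-closed. The forward direction is essentially a test case, and the backward direction is a direct application of functoriality together with the definition of $\Sigma$-closedness. Throughout, I would freely use the fact that, by addition and negation functoriality, whenever $\underline{a},\underline{b}\in X^I$ are $\Sigma$-summable with the common index set $I$, the difference family $\underline{a}-\underline{b}\in X^I$ is $\Sigma$-summable with sum $\Sigma\underline{a}-\Sigma\underline{b}$ (apply negation functoriality to $\underline{b}$ and then addition functoriality to $\underline{a}$ and $-\underline{b}$, noting that zero-extension over $I\cup I=I$ yields componentwise addition).

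For necessity, I would suppose that $\overline{\Sigma}$ is a function and let $\underline{a}=(a_i)_{i\in I}\in\dom(\Sigma)$ have sum $x$ and every entry $a_i\in S$. Apply functoriality to $\underline{a}-\underline{a}$ to see that the all-zero family $(0)_{i\in I}$ is $\Sigma$-summable with sum $0$. Since all entries of both $\underline{a}$ and $(0)_{i\in I}$ lie in $S$, both families project to the same constant coset family $(\overline{0})_{i\in I}\in\overline{X}^I$. Hence both $((\overline{0})_{i\in I},\overline{x})$ and $((\overline{0})_{i\in I},\overline{0})$ belong to $\overline{\Sigma}$, and single-valuedness forces $\overline{x}=\overline{0}$, i.e., $x\in S$.

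For sufficiency, I would suppose that $S$ is $\Sigma$-closed and show directly that $\overline{\Sigma}$ is single-valued. Let $((\overline{c_i})_{i\in I},\overline{z}_1)$ and $((\overline{c_i})_{i\in I},\overline{z}_2)$ both lie in $\overline{\Sigma}$. By the definition of $\overline{\Sigma}$, there exist $\Sigma$-summable families $\underline{a},\underline{b}\in X^I$ with sums $x,y$ such that $\overline{a_i}=\overline{c_i}=\overline{b_i}$ for every $i\in I$, $\overline{x}=\overline{z}_1$, and $\overline{y}=\overline{z}_2$. The difference family $\underline{a}-\underline{b}$ is $\Sigma$-summable with sum $x-y$, and each of its entries $a_i-b_i$ lies in $S$ because $\overline{a_i}=\overline{b_i}$. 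The $\Sigma$-closedness of $S$ then gives $x-y\in S$, so $\overline{z}_1=\overline{z}_2$.

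The argument is routine once one unwinds the definitions; I do not anticipate a serious obstacle. The only point that needs care is making sure that addition functoriality, stated in the paper for families indexed by possibly distinct sets via zero extension, specializes correctly to componentwise subtraction when the two index sets coincide. Once that is in hand, both directions are two-line applications of the hypotheses.
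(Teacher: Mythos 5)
Your proposal is correct and follows essentially the same route as the paper: both directions rest on the difference-family trick $\underline{a}-\underline{b}$ enabled by addition and negation functoriality, with the only cosmetic difference being that you prove sufficiency directly where the paper argues contrapositively. Your side remark about zero-extension collapsing to componentwise subtraction when the index sets coincide is a fair point of care, and it is handled correctly.
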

\begin{proof}
We already proved the backwards direction, contrapositively.  We will now show the forward direction.  Assume that $\overline{\Sigma}$ is a function.  Let $\underline{a}=(a_i)_{i\in I}\in S^{I}$ be a summable family, with sum $x\in X$.  Our goal is to show that $x\in S$.

Notice that $(\overline{a_i})_{i\in I}=(\overline{0})_{i\in I}$ has $\overline{\Sigma}$-sum $\overline{x}$.  On the other hand, since subtraction is respected, $\underline{a}-\underline{a}=(0)_{i\in I}$ has $\Sigma$-sum $x-x=0$.  Thus, $\overline{x}=\overline{\Sigma}(\overline{0})_{i\in I}=\overline{0}$, and so $x\in S$.
\end{proof}

Whenever summable families are ordinal-indexed, more can be said about the concept of $\Sigma$-closure, connecting it to a topology, or at least a topological-like conditions.

\begin{lemma}\label{Lemma:BicondSigmaClosed}
Let $(X,+)$ be an abelian group with a subgroup $S$.  Let $\Sigma$ be a summation system on $X$ whose summable families are ordinal-indexed, and with $\Sigma$ satisfying \textup{Axiom \ref{Axiom:InitialSegments}}.  If $S$ is $\Sigma$-closed, then for each $\underline{a}\in \dom(\Sigma)$,
\[
\underline{a}\in S^I\, \Longleftrightarrow\,  p(\underline{a})\in S^I.
\]
The same biconditional is true if $S$ is closed under $\Sigma$-limits.  Consequently, a subgroup of $X$ is $\Sigma$-closed if and only if it is closed under $\Sigma$-limits.
\end{lemma}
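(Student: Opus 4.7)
The plan is to prove both biconditionals by a single transfinite induction on the ordinal index, exploiting the identity $a_i = p_i - \sigma_i$, where I write $\sigma_i := \Sigma(a_j)_{j \in I_{<i}}$ (these exist for every $i$ by Axiom \ref{Axiom:InitialSegments}, and the partial sum is $p_i = \sigma_i + a_i$). Thus, assuming the induction hypothesis $a_j \in S$ and $p_j \in S$ for every $j < i$, the remaining task at stage $i$ is to show $\sigma_i \in S$; once that is known, $p_i \in S \iff a_i \in S$ follows immediately because $S$ is a subgroup. Both directions and both biconditionals then reduce to proving $\sigma_i \in S$ at each stage, starting from the base case $\sigma_{i_0} = \Sigma()$, which belongs to $S$ under either hypothesis since the empty family is vacuously a family in $S^\emptyset$ whose $\Sigma$-limit and $\Sigma$-sum both equal $\Sigma()$.

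For the inductive step, the two hypotheses give two essentially dual routes to $\sigma_i \in S$. If $S$ is $\Sigma$-closed, then $(a_j)_{j<i}$ lies in $S^{I_{<i}}$ by the inductive hypothesis and is $\Sigma$-summable by Axiom \ref{Axiom:InitialSegments}, so its sum $\sigma_i$ is in $S$ directly. If instead $S$ is closed under $\Sigma$-limits, then $(p_j)_{j<i}$ lies in $S^{I_{<i}}$ by the inductive hypothesis; by Proposition \ref{Prop:LimitsPartialSums}, applied to the summable family $(a_j)_{j<i}$, the partial-sum family $(p_j)_{j<i}$ has $\Sigma$-limit equal to $\sigma_i$, so $\sigma_i \in S$ by the closure hypothesis. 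This completes the induction in both cases and proves both biconditionals.

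For the final equivalence between $\Sigma$-closedness and closure under $\Sigma$-limits, I will use the natural bijection established in Proposition \ref{Prop:LimitsPartialSums} between summable families and families with $\Sigma$-limits, mediated by $\underline{a} \leftrightarrow p(\underline{a})$ with $\Sigma\underline{a} = \lim p(\underline{a})$. Assuming $S$ is $\Sigma$-closed, take any $\underline{s} \in S^I$ with a $\Sigma$-limit; write $\underline{s} = p(\underline{a})$, invoke the first biconditional to conclude $\underline{a} \in S^I$, and then $\Sigma$-closure yields $\Sigma\underline{a} = \lim \underline{s} \in S$. Conversely, assuming $S$ is closed under $\Sigma$-limits, take any summable $\underline{a} \in S^I$, apply the second biconditional to get $p(\underline{a}) \in S^I$, and then $\lim p(\underline{a}) = \Sigma\underline{a} \in S$.

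The only subtle point I anticipate is confirming that the base case $\Sigma() \in S$ is legitimately available under the ``closed under $\Sigma$-limits'' hypothesis; this is harmless as long as one reads $\lim()$ as $\Sigma d(()) = \Sigma()$ per Definition \ref{Definition:Limits}, so the empty family qualifies as a family in $S$ with a $\Sigma$-limit. No other obstacle is expected, since neither direction requires postfix associativity, functoriality, or any arithmetic beyond the subgroup operations on $S$.
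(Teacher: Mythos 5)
Your proof is correct and follows essentially the same route as the paper's: a transfinite induction built on the identity $p_i=\sigma_i+a_i$ together with the subgroup property of $S$, with Proposition \ref{Prop:LimitsPartialSums} supplying both the dualized argument for the limit-closed case and the bijection used for the final equivalence. Your only departure is cosmetic---you merge the two directions into a single induction whose sole task is showing $\sigma_i\in S$, and you write out the ``dualized argument'' that the paper leaves to the reader.
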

\begin{proof}
Let $S$ be a $\Sigma$-closed subgroup of $X$, and let $\underline{a}\in X^I$ be a summable family.

$(\Rightarrow)$: Assume that all members of $\underline{a}$ belong to $S$.  By Axiom \ref{Axiom:InitialSegments}, all partial sums are defined.  Further, each partial sum $s_i=\Sigma (a_j)_{j\in I_{<i}}+a_i$ belongs to $S$, using the fact that $S$ is closed under sums and under addition.

$(\Leftarrow)$:  Assume that all members of $p(\underline{a})$ belong to $S$.  By transfinite induction, assume that $a_j\in S$ for all indexes $j<i\in I$.  The partial sum $\Sigma (a_j)_{j\in I_{<i}} + a_i$ is in $S$, by assumption.  Also, $\Sigma (a_j)_{j\in I_{<i}}\in S$ by the inductive assumption in conjunction with the $\Sigma$-closed condition on $S$.  Finally, since $S$ is a subgroup, it is closed under subtraction, and so $a_i\in S$.

A similar, dualized argument shows that the biconditional holds when $S$ is closed under $\Sigma$-limits.  The final sentence is also clear, since the $\Sigma$-limit of $p(\underline{a})$ is just the $\Sigma$-sum of $\underline{a}$, and there is a natural bijection between the two types of families by Proposition \ref{Prop:LimitsPartialSums}.
\end{proof}

\begin{cor}\label{Cor:ClosureInduced}
Let $(X,+)$ be an abelian group.  Let $\tau$ be a Hausdorff topology on $X$, and let $\Sigma$ be the induced summation system.  A subgroup $S\leq X$ is $\Sigma$-closed if and only if it is closed in the $\tau_{\Sigma}$ topology.
\end{cor}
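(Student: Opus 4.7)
The plan is to combine Lemma~\ref{Lemma:BicondSigmaClosed} with parts (1), (3), and (8) of Theorem~\ref{Thm:WhatPartialSummationGetsUs}. Theorem~\ref{Thm:WhatPartialSummationGetsUs}(3) gives Axiom~\ref{Axiom:InitialSegments} for the induced $\Sigma$, so Lemma~\ref{Lemma:BicondSigmaClosed} applies and reduces the claim to the assertion that a subgroup $S\leq X$ is closed under $\Sigma$-limits if and only if it is $\tau_\Sigma$-closed. Moreover, since $\tau$ is Hausdorff, Theorem~\ref{Thm:WhatPartialSummationGetsUs}(8) ensures the cofinal-subsequence property \eqref{Eq:SubsequenceProperty} for $\Sigma$: every cofinal subfamily of an ordinal-indexed family with a $\Sigma$-limit is itself $\Sigma$-summable (as a limit) with the same limit. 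This is the only place the Hausdorff hypothesis enters, and it is the crucial ingredient.

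For the direction ``$\tau_\Sigma$-closed implies closed under $\Sigma$-limits,'' I would argue as follows. Let $\underline{s}=(s_i)_{i\in I}\in S^I$ have $\Sigma$-limit $x$. By Definition~\ref{Def:SigmaTopology}, $x$ is a $\tau_\Sigma$-limit of $\underline{s}$. If $x\notin S$, then $X\setminus S$ is a $\tau_\Sigma$-open neighborhood of $x$, forcing a tail of $\underline{s}$ to lie in $X\setminus S$; this contradicts $\underline{s}\in S^I$ unless $I$ is empty, in which case $x=\Sigma()=0\in S$ by Theorem~\ref{Thm:WhatPartialSummationGetsUs}(1).

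For the converse, I would show $X\setminus S\in\tau_\Sigma$ using the concrete description of $\tau_\Sigma$ recorded after Definition~\ref{Def:SigmaTopology}: a set $U$ is $\tau_\Sigma$-open precisely when every sequence with a $\Sigma$-limit in $U$ has a tail inside $U$. Fix $x\in X\setminus S$ that is a $\Sigma$-limit of some $\underline{s}=(s_i)_{i\in I}$. Suppose, for contradiction, that no tail of $\underline{s}$ avoids $S$. Then $J:=\{i\in I:s_i\in S\}$ is cofinal in $I$, so by \eqref{Eq:SubsequenceProperty} the subfamily $(s_i)_{i\in J}$ is itself $\Sigma$-summable (as a limit) with $\Sigma$-limit $x$. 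Since all members of this subfamily lie in $S$ and $S$ is closed under $\Sigma$-limits, we obtain $x\in S$, a contradiction.

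The main obstacle is ensuring that passing to the cofinal subfamily of $S$-members actually yields something with a $\Sigma$-limit (as opposed to just being a subsequence of a convergent net in some general sense). This is exactly what \eqref{Eq:SubsequenceProperty} provides, and it is precisely why the Hausdorff hypothesis on $\tau$ is invoked; without it, the subsequence's difference family might fail to be summable even though the original's is.
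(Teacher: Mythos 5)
Your proposal is correct and follows essentially the same route as the paper's proof: reduce via Lemma~\ref{Lemma:BicondSigmaClosed} (using Theorem~\ref{Thm:WhatPartialSummationGetsUs}(3)) to the equivalence of ``closed under $\Sigma$-limits'' with ``$\tau_\Sigma$-closed,'' then invoke the Hausdorff hypothesis through Theorem~\ref{Thm:WhatPartialSummationGetsUs}(8) to pass to the cofinal subfamily of members lying in $S$. The only cosmetic difference is that you phrase the easy direction via open neighborhoods and tails rather than the paper's ``a sequence is cofinal in itself'' applied to the closed-set criterion.
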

\begin{proof}
Being an induced summation system, the summable families under $\Sigma$ are ordinal-indexed.  Also, Axiom \ref{Axiom:InitialSegments} holds by Theorem \ref{Thm:WhatPartialSummationGetsUs}(3).  Thus, Lemma \ref{Lemma:BicondSigmaClosed} applies.  So, it suffices to show that $S$ is closed under $\Sigma$-limits if and only if it is $\tau_{\Sigma}$-closed.

Note that according to Proposition \ref{Prop:FinestTopologyWithLimitsDetermined} and Definition \ref{Def:SigmaTopology} a set $S\subseteq X$ is $\tau_{\Sigma}$-closed exactly when, for any sequence $\underline{s}$ with a $\Sigma$-limit $x$, if a cofinal subsequence of $\underline{s}$ belongs to $S$, then so does $x$.  In particular, any such set is closed under $\Sigma$-limits, since a sequence is always cofinal in itself.

Conversely, suppose that $S$ is closed under $\Sigma$-limits.  Let $\underline{s}$ be a sequence with a $\Sigma$-limit $x$, and suppose that a cofinal subsequence $\underline{t}$ belongs to $S$.  Since $\tau$ is Hausdorff, $\Sigma$ satisfies \eqref{Eq:SubsequenceProperty} by Theorem \ref{Thm:WhatPartialSummationGetsUs}(8).  Therefore, $\underline{t}$ has $x$ as a $\Sigma$-limit, and so $x\in S$, as desired.
\end{proof}

In the previous corollary, the assumption that $\Sigma$ is the induced summation system for a Hausdorff topology could be weakened a bit.  We need enough reindexing so that every summable family under $\Sigma$ can be replaced by an ordinal-indexed family, in which case if Axiom \ref{Axiom:InitialSegments} and \eqref{Eq:SubsequenceProperty} both hold, then the conclusion of the corollary holds as well.

The previous work applies just as well to factor rings as it does to factor groups.  In particular, all of the previous results of this section apply to ideals in reorderable rings.  In the following example, we illustrate how $\Sigma$-closure behaves in endomorphism rings of vector spaces, relative to various reorderable summation systems.

\begin{example}
Let $V_D$ be a right vector space of infinite dimension $\kappa\geq \aleph_0$, over a division ring $D$.  Let $X$ be its endomorphism ring.  It is well-known that we can view $X$ as the ring ${\rm CFM}_{\kappa}(D)$ of $\kappa\times \kappa$ column-finite matrices with entries in $D$.  (A matrix is \emph{column-finite} when each column has only finitely many nonzero entries.)

Let $\Sigma$ be the summation system induced by the finite topology.  A family
\[
\underline{a}=(a_i)_{i\in I}\in X^I
\]
of column-finite matrices is summable exactly when for each fixed column (index) only finitely many of the $a_i$ have nonzero entries in that column.  The sum $\Sigma\underline{a}$ is then easy to compute; any given column is determined by adding together the corresponding columns of each $a_i$.

The ideals of $X$ are exactly the zero ideal, the trivial ideal $X$, and for each infinite cardinal $\lambda\leq \kappa$ the set $J_{<\lambda}$ of column-finite matrices with row rank less than $\lambda$.  (So, when $\kappa=\aleph_0$, there is only one nontrivial nonzero ideal, consisting of matrices with finite rank.)  This characterization of all the ideals of ${\rm CFM}_{\kappa}(D)$ is well-known, and for a fully worked-out proof see the solution to \cite[Ex.\ 3.16]{LamExercises}.

The only $\Sigma$-closed ideals are the zero ideal and the trivial ideal.  Indeed, if we let $e_{i,j}$ denote the matrix with $1$ in the $(i,j)$ entry and zeros elsewhere, then just consider the family $(e_{i,i})_{i\in \kappa}$, which is summable with sum the identity matrix $1\in X$.  On the other hand, there are many $\Sigma$-closed left ideals; just take the annihilator of a subset of $V$.

We can modify $\Sigma$ to create a new summation system that is still reorderable.  Given an infinite cardinal $\mu\leq \kappa$, let $\Sigma_{<\mu}$ be the restriction of $\Sigma$ to those summable families with less than $\mu$ nonzero entries.  For instance, $\Sigma_{<\aleph_0}$ is just the restriction of $\Sigma$ to finite families (up to core-extensions).  The $\Sigma_{<\mu}$-closed ideals of $R$ are exactly the zero ideal, the trivial ideal, and the ideals $J_{<\lambda}$ with $\lambda\geq \mu$.  Consequently, we can make sense of infinite summation in $R/J_{<\lambda}$ as long as we are only summing fewer than $\lambda$ nonzero elements; otherwise the summation is not well-defined.
\end{example}

Not only can summation systems pass to factor algebras, sometimes axioms continue to hold for those quotient systems.  For instance, reorderability automatically passes to factor rings modulo $\Sigma$-closed ideals.  Here follows the quick proof.

\begin{thm}\label{Thm:ReorderablePassQuotients}
If $X$ is a reorderable ring with summation system $\Sigma$, then $\overline{X}$ is a reorderable ring with summation system $\overline{\Sigma}$ when working modulo a $\Sigma$-closed ideal.
\end{thm}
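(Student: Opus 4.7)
The plan is to verify the four defining properties of a reorderable ring---surjectivity, simple summation of the singleton $1$, the existence of an additive inverse $-1$, and left reorderability---for $\overline{X}$ equipped with $\overline{\Sigma}$. Before anything else, a reorderable ring satisfies addition and negation functoriality by Theorem \ref{Thm:MainBigLeftReord}, so Proposition \ref{Prop:SigmaClosedQuot} guarantees that $\overline{\Sigma}$ is a genuine summation system on $\overline{X}$. Surjectivity follows by projecting lifts from $X$; the identity $\Sigma(1)=1$ gives $\overline{\Sigma}(\overline{1})=\overline{1}$; and the equation $\Sigma(1,-1)=\Sigma()$ gives $\overline{\Sigma}(\overline{1},\overline{-1})=\overline{\Sigma}()$, with $\overline{-1}$ playing the role of the additive inverse of $\overline{1}$.

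The bulk of the work is verifying Axiom \ref{Axiom:LeftReorder} for $\overline{\Sigma}$. Suppose we are given a $\overline{\Sigma}$-summable family $(\overline{a_i})_{i\in I}$, an arbitrary family $(\overline{r_k})_{k\in K}$, and a map $\psi\colon K\to\power(I)$ such that each $(\overline{r_k})_{k\in \psi'(i)}$ is $\overline{\Sigma}$-summable. The main obstacle is that the lifts of the $\overline{r_k}$ witnessing $\overline{\Sigma}$-summability of $(\overline{r_k})_{k\in\psi'(i)}$ in $X$ may depend on $i$, so there need not exist a single global lift $(r_k)_{k\in K}$ in $X$ under which every restriction $(r_k)_{k\in\psi'(i)}$ is $\Sigma$-summable; consequently, a naive one-shot application of Axiom \ref{Axiom:LeftReorder} in $X$ is unavailable. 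My strategy sidesteps this by keeping per-$i$ lifts and reassembling on a diagonal index set.

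Concretely, fix a $\Sigma$-summable lift $(a_i)_{i\in I}$ of $(\overline{a_i})_{i\in I}$ and, for each $i\in I$, a $\Sigma$-summable lift $(\sigma^{(i)}_k)_{k\in\psi'(i)}$ of $(\overline{r_k})_{k\in\psi'(i)}$; write $y_i:=\Sigma(\sigma^{(i)}_k)_{k\in\psi'(i)}$. Set $L:=\{(i,k)\in I\times K\,:\,k\in\psi'(i)\}$ and consider the diagonal family $\underline{p}:=(\sigma^{(i)}_k a_i)_{(i,k)\in L}$. Applying Axiom \ref{Axiom:LeftReorder} in $X$ with the summable family $(a_i)_{i\in I}$, the family $(\sigma^{(i)}_k)_{(i,k)\in L}$, and the map $L\to\power(I)$ given by $(i,k)\mapsto\{i\}$---whose hypothesis reduces to the assumed summability of each $(\sigma^{(i)}_k)_{k\in\psi'(i)}$---I conclude that $\underline{p}$ is $\Sigma$-summable with $\Sigma\underline{p}=\Sigma(y_i a_i)_{i\in I}$. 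Since Axiom \ref{Axiom:LeftMultSum} makes $(y_i a_i)_{i\in I}$ summable and the cosetwise projection of this sum is exactly the left-hand side of the target equation for $\overline{\Sigma}$, the left-hand side equals $\overline{\Sigma\underline{p}}$.

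For the right-hand side, apply insertive associativity (Axiom \ref{Axiom:InsertAssociativity}) to $\underline{p}$, partitioning $L$ by the second coordinate, to get $\Sigma\underline{p}=\Sigma(z_k)_{k\in K}$ with $z_k:=\Sigma(\sigma^{(i)}_k a_i)_{i\in\psi(k)}$. To identify $\overline{z_k}$ with $\overline{r_k}\cdot\overline{x_k}$ for $x_k:=\Sigma(a_i)_{i\in\psi(k)}$, choose any lift $r_k$ of $\overline{r_k}$; infinite distributivity (Axiom \ref{Axiom:Distributive}) then makes $(r_k a_i)_{i\in\psi(k)}$ summable with sum $r_k x_k$, so the difference $((\sigma^{(i)}_k-r_k)a_i)_{i\in\psi(k)}$ is summable by addition and negation functoriality, is $S$-valued since $\sigma^{(i)}_k-r_k\in S$ and $S$ is an ideal, and hence has sum in $S$ by $\Sigma$-closedness of $S$. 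Therefore $z_k-r_k x_k\in S$, so $\overline{z_k}=\overline{r_k}\,\overline{x_k}$, and projecting $\Sigma(z_k)_{k\in K}$ to $\overline{X}$ exhibits the right-hand side as $\overline{\Sigma\underline{p}}$ as well. The delicate step is the diagonal-family construction together with the targeted application of Axiom \ref{Axiom:LeftReorder} in $X$ over $L$; once that is in hand, the matching of both sides with $\overline{\Sigma\underline{p}}$ is straightforward bookkeeping.
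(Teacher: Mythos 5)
Your proof is correct and follows essentially the same route as the paper: you identify the same obstruction (per-$i$ lifts of the $\overline{r_k}$ need not glue), resolve it with the same diagonal index set $L$ and an application of Axiom \ref{Axiom:LeftReorder} via the map $(i,k)\mapsto\{i\}$ (which is exactly the content of Corollary \ref{Cor:ReordCor}, here inlined), and then regroup by $k$ with insertive associativity before projecting. The only cosmetic difference is that you verify the final identification $\overline{z_k}=\overline{r_k}\,\overline{x_k}$ by an explicit $S$-valued difference-family argument where the paper simply appeals to the corresponding unbarred identity in $X$.
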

\begin{proof}
Of the four conditions defining reorderable rings, three immediately pass to factor rings.  It thus suffices to verify that $\overline{\Sigma}$ satisfies Axiom \ref{Axiom:LeftReorder}.

Let $(\overline{a_i})_{i\in I}$ be $\overline{\Sigma}$-summable, let $(\overline{r_k})_{k\in K}$ be an arbitrary family of elements from $\overline{X}$, and let $\psi\colon K\to \power(I)$ be a map such that $(\overline{r_k})_{k\in \psi'(i)}$ is summable for each $i\in I$.  Summable families from $\overline{X}$ are exactly the images of summable families from $X$.  So, we may assume that $(a_i)_{i\in I}$ is a $\Sigma$-summable family lifting $(\overline{a_i})_{i\in I}$.  Also, for each $i\in I$ we can lift the summable family $(\overline{r_k})_{k\in \psi'(i)}$ to a summable family $(r_{(i,k)})_{k\in \psi'(i)}$.  Unfortunately, we may not be able to piece these lifted families together into a single lift of the family $(\overline{r_k})_{k\in K}$, which is what makes this part of the proof complicated.

We now have
\[
\Sigma\left(\Sigma \left(r_{(i,k)}\right)_{k\in \psi'(i)} a_i\right)_{i\in I} = \Sigma \left(r_{(i,k)}a_i\right)_{\{(i,k)\in I\times K\, :\, k\in \psi'(i)\}}
\]
by an application of Corollary \ref{Cor:ReordCor}.  Using insertive associativity to partition the sum on the right side, it is equal to
\[
\Sigma \left(\Sigma \left(r_{(i,k)}a_i\right)_{i\in \psi(k)}\right)_{k\in K}.
\]
Thus, passing to the factor ring, and noting that $\overline{r_{(i,k)}}=\overline{r_k}$ for each $k\in K$ and each $i\in \psi(k)$, we obtain
\[
\overline{\Sigma}\left(\overline{\Sigma} \left(\overline{r_{k}}\right)_{k\in \psi'(i)} \overline{a_i}\right)_{i\in I} = \overline{\Sigma} \left(\overline{\Sigma} \left(\overline{r_{k}a_i}\right)_{i\in \psi(k)}\right)_{k\in K}.
\]
To finish, it suffices to show that $\overline{\Sigma} \left(\overline{r_{k}a_i}\right)_{i\in \psi(k)}=\overline{r_k}\,\overline{\Sigma} \left(\overline{a_i}\right)_{i\in \psi(k)}$, for each $k\in K$.  This follows easily from the fact that the same equality without bars holds in the reorderable ring $X$.
\end{proof}

It would be interesting to know if there are other algebraic axioms that hold for endomorphism ring summation that are not implied by reorderability, especially any which also pass to factor algebras.

We finish this paper by showing how the Jacobson radical is often closed under countable sums.  (The reader is directed to \cite{Lam} for basic facts about the Jacobson radical.)

\begin{thm}[{cf.\ \cite[Theorem 8.3]{Corner}}]\label{Thm:ReorderableCountableJacobson}
Let $X$ be a reorderable ring with respect to a summation system $\Sigma$.  If $X$ is an $I_0$-ring, meaning that every left ideal of $X$ without nonzero idempotents is contained in the Jacobson radical $J$, then $J$ is closed under countable $\Sigma$-sums.
\end{thm}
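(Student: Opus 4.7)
The plan is to leverage the $I_0$ hypothesis. Given a countable summable family $(a_i)_{i\in\N}$ with $a_i \in J$ and sum $s = \Sigma a_i$, I aim to show that the principal left ideal $Xs$ has no nonzero idempotent; then $I_0$ gives $Xs \subseteq J$, so $s \in J$. Suppose, for contradiction, that $e = xs$ is a nonzero idempotent in $Xs$. Setting $y = ex$, the relation $e^2 = e$ gives $ey = y$ and $e = ys$, and infinite distributivity (Axiom \ref{Axiom:Distributive}, which holds in every reorderable ring by Theorem \ref{Thm:MainBigLeftReord}) applied twice yields $e = y\Sigma(a_i)_i = \Sigma(ya_i)_i$ and then $e = e\cdot e = \Sigma(ya_i e)_i$. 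Put $c_i := y a_i e$; a direct check gives $e c_i = c_i e = c_i$, so $c_i \in eXe$, and $c_i \in J$ since $J$ is two-sided. By Peirce decomposition, $c_i \in eJe = J(eXe)$, so we have expressed the identity of the corner ring $eXe$ as a countable $\Sigma$-sum of Jacobson-radical elements.

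The central move is to apply left reorderability (Axiom \ref{Axiom:LeftReorder}) so as to recover a summable constant family $(e)_{k\in\N}$. For each $k\geq 0$ the tail $t_k := \Sigma(c_i)_{i\geq k}$ exists by subfamily summability, and prefix associativity gives $t_k = e - (c_0 + \cdots + c_{k-1})$; since the subtracted element lies in $J(eXe)$, each $t_k$ with $k\geq 1$ is a unit in $eXe$, and we let $u_k$ denote its inverse. Applying Axiom \ref{Axiom:LeftReorder} with $I = K = \N$, $\underline{a} = (c_i)_i$, $\underline{r} = (u_{k+1})_{k\in\N}$, and $\psi(k) = \{i\in\N : i\geq k\}$ (so $\psi'(i) = \{0,1,\ldots, i\}$ is finite and each family $(u_{k+1})_{k\in\psi'(i)}$ is summable by finite-totality) guarantees that the outer right-hand sum $\Sigma(u_{k+1} t_k)_k$ is defined. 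Expanding $t_k = c_k + t_{k+1}$ gives $u_{k+1} t_k = u_{k+1} c_k + e$, and since $(u_{k+1} c_k)_k$ is summable by left-multiple summability (Axiom \ref{Axiom:LeftMultSum}), addition and negation functoriality then show that the constant family $(e)_{k\in\N}$ itself is summable in $X$.

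Finally, Theorem \ref{Thm:WeakSwindle}, applied to the additive group $(X,+)$ with its prefix associativity, forces $e = 0$ from the summability of $(e)_{k\in\N}$, contradicting the assumption $e \neq 0$. Hence $Xs$ has no nonzero idempotent, and by $I_0$ we obtain $s \in J$. The main obstacle is identifying the correct configuration of multipliers and index map in left reorderability: the choice $\psi(k) = \{i\geq k\}$ together with $u_{k+1} = t_{k+1}^{-1}$ is engineered precisely so that the outer right-hand sum equals $\Sigma(e + u_{k+1}c_k)_k$, which algebraically isolates the constant family needed to trigger the Eilenberg--Mazur-style swindle of Theorem \ref{Thm:WeakSwindle}; the remaining steps then assemble routinely from the axioms of a reorderable ring.
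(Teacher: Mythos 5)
Your proof is correct and follows essentially the same route as the paper's: reduce to the corner ring $eXe$, observe that the tails $e-(c_0+\cdots+c_{k-1})$ are units there, manufacture a summable constant family $(e)_{k\in\N}$ by left-multiplying by the inverses of those tails, and invoke the swindle of Theorem \ref{Thm:WeakSwindle}. The only difference is organizational: the paper gets summability of the tail family from Proposition \ref{Prop:CountIm} and then applies left multiple summability with the inverses $u_k$, whereas you fold the inverses $u_{k+1}$ directly into a left reorderability application and then subtract the summable correction $(u_{k+1}c_k)_k$ --- the same mechanism in a slightly different order.
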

\begin{proof}
Let $\underline{a}=(a_i)_{i\in \N}\in J^{\N}$ be a summable family, with sum $x\in X$.  Let $e\in Rx$ be an idempotent.  It suffices to show that $e=0$.  After replacing $\underline{a}$ by a left multiple, we may as well assume that $\Sigma \underline{a}=e$.  Moreover, after multiplying on the left and the right by $e$, we may assume that the members of $\underline{a}$ live in $eJe$, which is the Jacobson radical of $eXe$.

Let $s_i:=\Sigma (a_j)_{j\in \N_{\leq i}}\in eJe$.  By Proposition \ref{Prop:CountIm}, we have $(e-s_{i-1})_{i\in \N}$ is summable.  Each member is a unit in the corner ring $eXe$, so after replacing $(e-s_{i-1})_{i\in \N}$ by the appropriate left multiple family, each member is now equal to $e$.  By Theorem \ref{Thm:WeakSwindle}, we must have $e=0$.
\end{proof}

\begin{question}\label{Question:Final}
Does the previous theorem hold for uncountable sums?
\end{question}

There are some cases where the answer is positive.  To start, by \cite[Theorem 8.3]{Corner}, if $X$ is an $I_0$-ring that is a topological ring with respect to a Hausdorff, left linear topology $\tau$, then the Jacobson radical $J$ is $\tau$-closed.  When $X$ is additionally an endomorphism ring, when $\tau$ is the finite topology, and when $\Sigma$ is the induced topology, then $\tau\subseteq \tau_{\Sigma}$, so $J$ is $\tau_{\Sigma}$-closed, and hence $J$ is $\Sigma$-closed by Corollary \ref{Cor:ClosureInduced}.  This same argument also applies to $\overline{X}=X/S$ with quotient system $\overline{\Sigma}$, when $S$ is a closed ideal in the finite topology.

There is still the possibility that when $S$ is merely a $\Sigma$-closed ideal, the Jacobson radical of $\overline{X}$ may not be $\overline{\Sigma}$-closed.  This is because $S$ is $\tau_{\Sigma}$-closed, but may fail to be $\tau$-closed (since $\tau$ is generally coarser than $\tau_{\Sigma}$).  Hence, $\overline{X}$ may no longer be a Hausdorff topological ring.  If this happens when $X$ is an \emph{exchange ring} in the sense of \cite{WarfieldExchange} (which is a stronger condition than being $I_0$), this would answer an old open question raised in \cite{CJ} (whether ``finite exchange'' implies ``infinite exchange'') in the negative.

It may be easier to answer the following question:

\begin{question}
If $X=\End(M_R)$, and if $\Sigma$ is endomorphism ring summation on $X$, is every $\Sigma$-closed ideal also closed in the finite topology?
\end{question}

\section*{Acknowledgements}

The author thanks Michael Hru\v{s}\'{a}k and Alexander Shibakov for answering questions, which ultimately helped in the formulation of the example at the end of Section \ref{Section:UncondSums}.  The author also thanks George Bergman, Greg Conner, Curt Kent, Kyle Pratt, and Manny Reyes for comments and suggestions that improved the paper.  This work was partially supported by a grant from the Simons Foundation (\#963435 to Pace P.\ Nielsen).

\providecommand{\MR}{\relax\ifhmode\unskip\space\fi MR }
\providecommand{\MRhref}[2]{%
  \href{http://www.ams.org/mathscinet-getitem?mr=#1}{#2}
}
\providecommand{\href}[2]{#2}

\end{document}